\newtheorem{thm}{Theorem}[section]
\newtheorem{lem}[thm]{Lemma}
\newtheorem{pr}[thm]{Proposition}
\newtheorem{re}{Remark}[section]
\numberwithin{equation}{section}
\newcommand{\N}{\mathbb{N}}
\newcommand{\E}{\mathbb{E}}
\newcommand{\PP}{\mathbb{P}}
\begin{document}
\title{Convergence of martingale and moderate deviations for a branching random walk with a  random environment in time }
\author{Xiaoqiang Wang$^{a}$, Chunmao Huang$^{b}$
\\
\small{\emph{$^{a}$Shandong university (Weihai), School of Mathmatics and Statistics, 264209, Weihai, China}}\\
\small{\emph{$^{b}$Harbin institute of technology at Weihai, Department of mathematics, 264209, Weihai, China}}}
\maketitle

\begin{abstract}
 We consider a branching random walk on $\mathbb{R}$ with a stationary and ergodic environment $\xi=(\xi_n)$ indexed by time $n\in\mathbb{N}$.  Let $Z_n$ be the counting measure of particles of generation $n$ and $\tilde Z_n(t)=\int e^{tx}Z_n(dx)$ be its Laplace transform. We show the $L^p$ convergence rate and the uniform convergence of the martingale $\tilde Z_n(t)/\mathbb E[\tilde Z_n(t)|\xi]$, and establish a moderate deviation principle for the measures $Z_n$.

\bigskip
 \emph{AMS 2010 subject classifications.}  60J80, 60K37, 60F10.

\emph{Key words:} Branching random walk; random environment; moment; exponential convergence rate;$L^p$ convergence; uniform convergence; moderate deviation
\end{abstract}

\section{Introduction}\label{LTS1}

\subsection{Model and notation}

\noindent \emph{Branching random walks} were largely studied in the literature, see e.g. \cite{biggins77, biggins, biggins79, biggins97, b, chauvin,ka}.  In the classical branching random walk, the point processes indexed by the particles $u$, formulated by the the number of its offsprings and their displacements, have a common distribution for all particles. However, in reality these distributions may differ from generations according to an environment in time, or depend on particles' positions according to an environment in space. For this reason, branching random walks in random environments attract many authors' attention recently. Many results for classical branching random walk have been extended to random environments  both in time and space, see e.g. \cite{ CP, CY, greven, huang2,MN, yosh}.
Here we consider the case in a time random environment, where the distributions of the point processes indexed by  particles vary from generation to generation according to a random environment in time. Such a model is called \emph{branching random walk with a random environment in time} (BRWRE). It was first introduced by Biggins \& Kyprianou \cite{biggins04}. 
Recently, some limit theorems such as large deviation principles and central limit theorems were obtained in \cite{gao14, huang2,huang22}. %

\bigskip

 Let's describe the model. The random environment in time is modeled as a stationary and ergodic
sequence  of random variables, $\xi=(\xi_n)$, indexed by the time $n\in\mathbb{N}=\{0,1,2,\cdots\}$, taking values in some measurable space $(\Theta, \cal E)$.  Without loss of generality we can suppose that $\xi$ is defined on the product space
$(\Theta^{\N}, \cal E^{\otimes\N}, \tau) $, with  $\tau$  the law of $\xi$. Each realization of $\xi_n$ corresponds to a distribution $\eta_n=\eta(\xi_n)$
  on $\mathbb{N}\times\mathbb{R}\times\mathbb{R}\times\cdots$.
When the environment $\xi=(\xi_n)$ is given, the process can be described as follows. At time $0$, there is an initial particle $\emptyset$ of generation $0$ located at $S_{\emptyset}=0\in\mathbb{R}$;
at time $1$, it is replaced by $N=N({\emptyset})$ particles of generation $1$, located at $L_i=L_i({\emptyset })$, $1\leq i\leq N$, where the random vector
$X({\emptyset})=(N, L_1, L_{2},\cdots)\in\mathbb{N}\times\mathbb{R}\times\mathbb{R}\times\cdots$ is of distribution $\eta_0=\eta(\xi_0)$.  
In general, each particle $u=u_1\cdots u_n$ of generation $n$ located at $S_u$ is replaced at
time {n+1} by $N(u)$ new particles $ui$ of generation $n+1$, located at
$$S_{ui}=S_u+L_i(u)\qquad(1\leq i\leq N(u)),$$
where the  random vector $X(u)=(N(u),L_1(u),L_2(u),\cdots)$ is of distribution $\eta_n=\eta(\xi_n)$.  Note that the values $L_i(u)$ for $i> N(u)$ do not play any role for our model; we introduce them only for convenience. We can for example take $L_i(u)=0$ for $i> N(u)$. All particles behave independently conditioned on the environment $\xi$.

For each realization $\xi \in \Theta^\N$ of the environment sequence,
let $(\Gamma, {\cal G},  \mathbb{P}_\xi)$ be the probability space under which the
process is defined. 
The probability
$\mathbb{P}_\xi$ is usually called\emph{ quenched law}.
The total probability space can be formulated as the product space
$( \Theta^{\mathbb{N}}\times\Gamma , {\cal E}^{\N} \otimes \cal G,   \mathbb{P})$,
 where $ \mathbb{P} = \E  (\delta_\xi \otimes \mathbb{P}_{\xi}) $ with $\delta_\xi $ the Dirac measure at $\xi$ and $\E$ the expectation with respect to the law of  $\xi$, so that  for all measurable and
 positive function $g$ defined on $\Theta^{\mathbb{N}}\times\Gamma$, we have
  $$\int_{ \Theta^{\mathbb{N}}\times\Gamma } g (x,y) d\mathbb{P}(x,y) = \E  \int_\Gamma g(\xi,y) d\mathbb{P}_{\xi}(y).$$
The total
probability $\mathbb{P}$ is usually called \emph{annealed law}.
The quenched law $\mathbb{P}_\xi$ may be considered to be the conditional
probability of $\mathbb{P}$ given $\xi$. The expectation with respect to $\mathbb{P}$ will still be denoted by $\E$; there will be no confusion for reason of consistence.   The expectation with respect to
$\PP_\xi$ will be denoted by $\E_\xi$.


\medskip

 Let $$\mathbb{U}=\{\emptyset\}\bigcup\limits_{n\geq1}\mathbb{N}^n$$ be the set of all finite sequence $u=u_1\cdots u_n$. By definition, under $\mathbb{P}_\xi$, the random vectors $\{X(u)\}$, indexed  by $u\in\mathbb{U}$, are independent of each other, and each  $X(u)$ has distribution $\eta_n=\eta(\xi_n)$ if $|u|=n$, where $|u|$ denotes the length of $u$.
 Let $\mathbb{T}$ be the Galton-Watson tree  with defining element $\{N(u)\}$. We have: (a) $\emptyset\in\mathbb{T}$; (b) if $u\in\mathbb{T}$, then $ui\in\mathbb{T}$ if and only if $1\leq i\leq N(u)$; (c) $ui\in\mathbb{T}$ implies $u\in\mathbb{T}$.
 Let  $\mathbb{T}_n=\{u\in\mathbb{T}: |u|=n\}$ be the set of particles of generation $n\in\mathbb N$ and
\begin{equation}Z_n(\cdot)=\sum_{u\in\mathbb{T}_n}\delta_{S_u}(\cdot) \end{equation}
be the counting measure of particles of generation $n$. For a measurable  subset $A$ of $\mathbb R$,
$Z_n(A)$
denotes the number of particles of generation $n$ located in $A$. For any finite sequence $u$, let \begin{equation}X^{(u)}(\cdot) = \sum_{i=1}^{N(u)}   \delta_{L_i (u)} (\cdot)  \end{equation} be the counting measure  corresponding to the random vector $X(u)$, whose increasing points are $L_i(u)$,  $1\leq i \leq N(u)$. Denote $u|n$ by the restriction to the first $n$ terms of $u$, with the convention that   $u_0|0=\emptyset$. Set
\begin{equation}X_n=X^{(u_0|n)},\end{equation} where $u_0=(1,1,\cdots)$. The counting measure $X_n$ describes the evolution of the system at time $n$.

 For $n\in \mathbb{N}$ and $t\in \mathbb{R}$, denote
\begin{equation}
\tilde Z_n (t) = \int e^{tx}Z_n (dx)  = \sum_{u\in\mathbb{T}_n}
 e^{tS_u}
 \end{equation}
the Laplace transform of $Z_n$. It is also called \emph{partition
 function} by physicians.  In particular, for $t=0$, $\tilde Z_n(0)=Z_n(\mathbb R)$. Let
\begin{equation}
m_n(t) = \mathbb{E}_\xi\int e^{tx}X_n(dx)=  \mathbb{E}_\xi \sum_{i=1}^{N(u)} e^{tL_i(u)} \quad(|u|=n),
\end{equation}
be the Laplace transform of the counting measure describing the
evolution of the system at time $n$. Put
 \begin{equation}
P_0(t)=1\qquad \text{and}\qquad P_n(t)=\prod_{i=0}^{n-1}m_i(t) \quad\text{for $n\geq1$.}
\end{equation}
Then $P_n(t)=\mathbb{E}_\xi \tilde Z_n(t)$. Moreover, set
\begin{equation}
\tilde X_u(t)=\frac{e^{tS_u}}{P_{n}(t)}\qquad(|u|=n),
\end{equation}
and
\begin{equation}
W_n(t)=\frac{\tilde Z_n(t)}{P_n(t)}=\sum_{u\in\mathbb T_n}\tilde X_u(t).
\end{equation}
 Let $\mathcal {F}_0=\sigma(\xi_0,\xi_1,\xi_2,\cdots)$ and $\mathcal
{F}_n=\sigma(\xi_0,\xi_1,\xi_2,\cdots,X(u),\; |u|<n,\;i=1,2,\cdots)$. It is well known that for each $t$ fixed, $W_n(t)$ forms a
nonnegative martingale with respect to the filtration ${\mathcal {F}_n}$ under both laws $\mathbb{P}_\xi$ and  $\mathbb{P}$,  and
\begin{equation}
\lim_{n\rightarrow\infty}W_n(t)=W(t)\qquad a.s.
\end{equation}
with $\mathbb{E}_\xi W(t)\leq1$.  In the deterministic environment case, this martingale has been
studied by Kahane \&  Peyri\`ere \cite{kahane}, Biggins \cite{biggins77},
Durrett \& Liggett \cite{durrett}, Guivarc'h \cite{guivarch}, Lyons \cite{lyons} and
Liu \cite{liu97, liu98,liu2000,liu01}, etc. in different contexts.

Assume throughout that
\begin{equation}\label{ASS}
\mathbb{E}\log m_0(0)\in(0,\infty) \qquad\text{and} \qquad \mathbb E\left[\frac{N}{m_0(0)}\log^+N\right]<\infty.
\end{equation}
The first condition means that the corresponding  branching process in a random environment (BPRE), $\{Z_n(\mathbb R)\}$, is \emph{supercritical}, so that the survival of the population $\{Z_n(\mathbb R)\rightarrow\infty\}$ has positive probability; and the two conditions ensure that the limit of the normalized population, $W(0)$, is non-degenerate (cf. \cite{a1,a}). We also assume that
\begin{equation} \label{H1}
 \mathbb{E}|\log m_0 (t)|<\infty\qquad \mbox{ and } \qquad\mathbb{E}\left|\frac{m_0' (t)}{m_0 (t)} \right|<\infty
\end{equation}
for all $t\in \mathbb{R}$. The last two moment conditions imply  that
\begin{equation} \Lambda (t) := \mathbb{E}\log m_0 (t) \qquad \mbox{ and } \qquad \Lambda' (t) =  \mathbb{E} \frac{m_0' (t)}{m_0 (t)}
\end{equation}
are well defined as real numbers, so that $\Lambda (t)$
is differentiable everywhere on $\mathbb{R}$ with $\Lambda' (t)$ as its derivative.
Let
\begin{equation}\label{HT}
\begin{array}{l}
t_-= \inf \{ t \in \mathbb{R}: t\Lambda'(t) - \Lambda (t) \leq 0\}, \\
t_+= \sup \{ t \in \mathbb{R}: t\Lambda'(t) - \Lambda (t) \leq 0\},
\end{array}
\end{equation}
Then $ -\infty \leq t_- <0 < t_+ \leq \infty$, $t_-$ and $t_+$ are
two solutions of $t\Lambda'(t) - \Lambda (t) = 0$ if they are
finite. Denote
$$I=(t_-, t_+).$$
If $t\in I$ and $\mathbb E W_1(t)\log^+W_1(t)<\infty$, then $\mathbb E_\xi W(t)=1$ a.s. (cf. \cite{biggins04, ku}).

\bigskip
\subsection{$L^p$ convergence rate}

We first study the $L^p$ $(p>1)$ convergence of $W_n(t)$ to its limit $W(t)$ and its exponential  rate for $t\in\mathbb{R}$ fixed. When $t=0$, $W_n(0)$ reduces to the normalized population of the corresponding BPRE, whose convergence rate is carefully discussed in Huang \& Liu \cite{huang14}.  Without loss of generality, here we only consider the case where $t=1$ and assume that
\begin{equation}\label{LPH1}
m_0(1)=1.
\end{equation}
Write 
$W_n=W_n(1)$ for short.
For general case,  if $m_0(t)\in(0,\infty)$ a.s., we can construct a new BRERE with relative displacements  $\bar L_i(u)=tL_i(u)-\log m_n(t)$ $(|u|=n)$. Then this new BRERE satisfies $\bar m_0(1)=1$ and $\bar W_n=W_n(t)$.
Furthermore,  we also assume that
\begin{equation}\label{LPH2}
\mathbb{P}(W_1=1)<1,
\end{equation}
which avoids the trivial case where $W_n=1$ a.s..

\medskip

The following theorem shows the $L^p$ convergence (with $\rho=1$) of $W_n$ under quenched law $\mathbb P_\xi$ and its exponential rate (with $\rho>1$).

\begin{thm}[Quenched $L^p$ convergence rate]\label{LPBRW1.2}
Assume (\ref{LPH1}). Let $p>1$ and $\rho\geq1$.
\begin{itemize}
\item[(a)] If $1<p<2$,
$$\mathbb{E}\log\mathbb{E}_\xi W_1^r<\infty\qquad \text{and}\qquad \rho<\exp(-\frac{1}{r}\mathbb{E}\log m_0(r))$$
for some $r\in[p,2]$, then
$$W_n-W=o(\rho^{-n})\quad \text{a.s.  and in $\mathbb P_\xi$-$L^p$ for almost all $\xi$.}$$
\item[(b)] If $p=2$ or $2<p\leq t_+$, and $\mathbb E \log \mathbb E_\xi W_1^p<\infty$, then for for almost all $\xi$,
$$\limsup_{n\rightarrow\infty}\rho^n(\mathbb{E}_\xi|W_n-W|^p)^{1/p}\left\{\begin{array}{ll}
=0\quad  &\text{if}\;\rho<\rho_c,\\
>0\quad   &\text{if}\;\rho>\rho_c \;\;\text{and}\;\; \mathbb E\log^-\mathbb E_\xi |W_1-1|^2<\infty,
\end{array}
\right.$$
where $\rho_c=\exp(-\frac{1}{2}\mathbb{E}\log m_0(2))$.
\end{itemize}
\end{thm}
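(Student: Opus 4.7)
The strategy is to telescope $W - W_n = -\sum_{k\ge n} d_k$ with $d_k := W_{k+1} - W_k$, and to bound the $L^p(\mathbb P_\xi)$-norms of the increments through the branching decomposition
$$d_k = \sum_{u\in\mathbb T_k} e^{S_u}\bigl(W_1^{(u)} - 1\bigr),$$
in which, conditionally on $\mathcal F_k$, the $\{W_1^{(u)}\}_{|u|=k}$ are independent with conditional mean one and each distributed as $W_1$ built from the shifted environment $T^k\xi$. Writing $c_k^{(q)} := \mathbb E_\xi|W_1(T^k\xi)-1|^q$, two general quenched estimates will drive every calculation: by the stationarity of $\xi$ and the moment hypothesis $\mathbb E\log^+\mathbb E_\xi W_1^q<\infty$, Borel--Cantelli yields $\tfrac1k\log c_k^{(q)}\to 0$ $\tau$-a.s., while Birkhoff's ergodic theorem gives $\tfrac1k\log P_k(q)\to\Lambda(q)$ $\tau$-a.s.

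For part (a) I would apply the von Bahr--Esseen inequality conditionally on $\mathcal F_k$ to the centred, conditionally independent sum giving $d_k$, producing
$$\mathbb E_\xi|d_k|^r \le 2\,c_k^{(r)} P_k(r),$$
whence $\|d_k\|_{\mathbb P_\xi,r} = e^{k\Lambda(r)/r + o(k)}$ $\tau$-a.s. Since $r\ge p$, Minkowski in $L^r$ combined with the hypothesis $\rho<\exp(-\Lambda(r)/r)$ gives
$$\rho^n\|W_n-W\|_{\mathbb P_\xi,p} \;\le\; \rho^n\sum_{k\ge n}\|d_k\|_{\mathbb P_\xi,r} \;=\; e^{n(\log\rho+\Lambda(r)/r)+o(n)} \longrightarrow 0,$$
which is the $L^p$ statement. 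For the almost-sure version, I would pick $\rho'\in(\rho,\exp(-\Lambda(r)/r))$, apply Markov's inequality to $\|W_n-W\|_r^r$ at threshold $\epsilon\rho^{-n}$, and invoke Borel--Cantelli; summability holds because the ratio $(\rho/\rho')^{nr}$ is geometric.

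For part (b) with $p\ge 2$, Burkholder's martingale inequality conditional on $\xi$, followed by Minkowski in $L^{p/2}$, reduces the problem to
$$\|W_n-W\|_{\mathbb P_\xi,p}^2 \;\le\; C_p\sum_{k\ge n}\|d_k\|_{\mathbb P_\xi,p}^2.$$
The conditional Rosenthal inequality for $d_k$ produces two pieces,
$$\mathbb E_\xi|d_k|^p \;\le\; C_p\Bigl(\bigl(c_k^{(2)}\bigr)^{p/2} P_k(2)^{p/2}\,\mathbb E_\xi W_k(2)^{p/2} \;+\; c_k^{(p)} P_k(p)\Bigr),$$
in which a preliminary quenched $L^{p/2}$-boundedness lemma for the auxiliary martingale $\{W_k(2)\}$ (valid under $p\le t_+$) controls the factor $\mathbb E_\xi W_k(2)^{p/2}$. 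The monotonicity of $q\mapsto \Lambda(q)/q$ on $(0,t_+]$ --- a consequence of $q\Lambda'(q)-\Lambda(q)\le 0$ there --- gives $\Lambda(p)/p\le\Lambda(2)/2$, so both Rosenthal terms are of size $e^{k\Lambda(2)/2+o(k)}$, yielding $\|d_k\|_{\mathbb P_\xi,p}\le e^{k\Lambda(2)/2+o(k)}$ and a geometric tail bound that forces $\rho^n\|W_n-W\|_p\to 0$ whenever $\rho<\rho_c$. The matching lower bound for $\rho>\rho_c$ uses $L^2$-orthogonality of martingale increments and the dominance $\|\cdot\|_p\ge\|\cdot\|_2$ to give
$$\|W_n-W\|_{\mathbb P_\xi,p}^2 \;\ge\; \|d_n\|_{\mathbb P_\xi,2}^2 \;=\; c_n^{(2)} P_n(2);$$
the extra hypothesis $\mathbb E\log^- c_0^{(2)}<\infty$ promotes $\tfrac1n\log c_n^{(2)}\to 0$ to a two-sided a.s.\ statement, so $\rho^n\|W_n-W\|_p\ge e^{n(\log\rho+\Lambda(2)/2)+o(n)}\to\infty$.

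The main technical obstacle is the preliminary quenched boundedness $\sup_k\mathbb E_\xi W_k(2)^{p/2}<\infty$ that is invoked in the Rosenthal estimate for $p>2$: the numerical constants in Burkholder and Rosenthal are universal, but the per-generation contributions are random and must be controlled pathwise in $\xi$, which is precisely the sort of bound that the theorem itself is about. I would handle it by bootstrapping --- starting from the $r\le 2$ case of part (a) applied to the $t=2$ martingale (which requires only von Bahr--Esseen), then using the Rosenthal decomposition together with $p\le t_+$ and the convexity of $\Lambda$ to propagate the bound to higher moment exponents in finitely many steps.
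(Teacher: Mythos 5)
Your proposal is correct and follows essentially the same strategy as the paper: bound the martingale increments $d_k$ via conditional martingale/moment inequalities, convert the resulting per-generation products $P_k(\cdot)$ and constants $c_k^{(\cdot)}$ into exponential rates via the ergodic theorem and the stationarity-based $o(k)$ control (the paper packages this step as its Lemma \ref{LPBRWL2.1}), and bootstrap the auxiliary bound $\sup_k\mathbb{E}_\xi W_k(2)^{p/2}<\infty$ using monotonicity of $\Lambda(q)/q$ on $(0,t_+]$ (the paper's Lemma \ref{LPL1}, proved by dyadic induction on $p$ exactly as you sketch). The technical points where you diverge are minor but worth noting. First, for $1<p<2$ you use von Bahr--Esseen where the paper uses Burkholder followed by concavity of $x^{r/2}$; these deliver the same bound $\mathbb{E}_\xi|d_k|^r\le C\,c_k^{(r)}P_k(r)$. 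Second, for $p\ge 2$ you use Rosenthal (two terms) where the paper uses a second application of Burkholder plus a weighted-Jensen step (a single term $\approx P_k(2)^{p/2}\mathbb{E}_\xi W_k(2)^{p/2}c_k^{(p)}$); both are controlled by the same exponential because $\Lambda(p)/p\le\Lambda(2)/2$. Third, you telescope $W-W_n=-\sum_{k\ge n}d_k$ directly and apply Markov plus Borel--Cantelli for the a.s.\ statement, whereas the paper works with the auxiliary series $\hat A_n(\rho)=\sum_{k\le n}\rho^k d_k$ and invokes the Alsmeyer--Iksanov--Polotsky--R\"osler equivalence between convergence of $A(\rho)$ and $\hat A(\rho)$; your route is more elementary and bypasses that lemma. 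Finally, your lower bound for $\rho>\rho_c$ is more direct and actually stronger: via $L^2$-orthogonality, $\|\cdot\|_p\ge\|\cdot\|_2$, and the two-sided a.s.\ control of $\frac1n\log c_n^{(2)}$ (which is where the extra hypothesis $\mathbb{E}\log^-\mathbb{E}_\xi|W_1-1|^2<\infty$ is needed), you show $\rho^n\|W_n-W\|_{\mathbb P_\xi,p}\to\infty$, whereas the paper argues by contradiction through a zero-one law for the event $\{\rho^n\|W_n-W\|_p\to0\}$ combined with its Proposition \ref{CMDPP1}(b). Your direct argument avoids the zero-one-law detour and is, if anything, cleaner.
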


If $p=2$ or $2<p\leq t_+$, Theorem \ref{LPBRW1.2}(b) shows  that under certain moment conditions,  the value $\rho_c$ is the critical value for the $L^p$ convergence of $\rho^n(W_n-W)$ to $0$ under quenched law $\mathbb{P}_\xi$.
In order to show the $L^p$ convergence of $W_n$ under annealed law $\mathbb{P}$ and its  rate, we need to assume that the environment random variables $(\xi_n)$ are i.i.d..

\begin{thm}[Annealed $L^p$ convergence]\label{LPBRW1.3}
Assume (\ref{LPH1}), (\ref{LPH2}) and that $(\xi_n)$ are i.i.d..
Let $p>1$.  Then $W_n\rightarrow W$ in $\mathbb P$-$L^p$ if and only if
$$ \mathbb{E}W_1^p<\infty\qquad \text{and}\qquad\mathbb{E}m_0(p)<1.$$
\end{thm}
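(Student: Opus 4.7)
The plan is to prove necessity and sufficiency separately, with the sufficiency split according to whether $p\leq 2$ or $p>2$.

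\emph{Necessity.} Assume $W_n \to W$ in $\mathbb{P}$-$L^p$, so $C := \sup_n \mathbb{E}W_n^p < \infty$; the case $n = 1$ gives $\mathbb{E}W_1^p \leq C$. For $p \geq 1$ and $a_u \geq 0$ one has $(\sum_u a_u)^p \geq \sum_u a_u^p$; applied with $a_u = e^{S_u}$ this yields $W_n^p \geq \tilde Z_n(p)$, hence
$$(\mathbb{E}m_0(p))^n = \mathbb{E}\tilde Z_n(p) \leq \mathbb{E}W_n^p \leq C$$
by independence of $(\xi_n)$, so $\mathbb{E}m_0(p) \leq 1$. To rule out equality, pass to the limit: $W$ satisfies the annealed smoothing identity $W \stackrel{d}{=} \sum_{i=1}^N e^{L_i}W^{(i)}$, where, conditionally on $\theta\xi$, the $W^{(i)}$ are i.i.d.\ copies of $W$, independent of the first-generation vector $(\xi_0, N, L_1, \ldots)$. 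The same pointwise inequality inside the $p$-th moment, together with independence of $\xi_0$ from $\theta\xi$, gives $\mathbb{E}W^p \geq \mathbb{E}m_0(p)\cdot\mathbb{E}W^p$. Since $L^p$ convergence forces $\mathbb{E}W = 1$ (hence $\mathbb{E}W^p \geq 1 > 0$) and (\ref{LPH2}) excludes the degenerate single-line case $N \equiv 1$ (in which $W_n = e^{S_n}$ with $S_n \to -\infty$ and $W = 0$), we have $\mathbb{P}(N \geq 2) > 0$; strict convexity of $x \mapsto x^p$ on the positive-probability event that two of the $e^{L_i}W^{(i)}$ are positive makes the inequality strict, so $\mathbb{E}m_0(p) < 1$.

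\emph{Sufficiency for $p \in (1,2]$.} Assume $\mathbb{E}W_1^p < \infty$ and $\rho_p := \mathbb{E}m_0(p) < 1$; since $(W_n)$ converges a.s., it suffices to show $\sup_n \mathbb{E}W_n^p < \infty$. Decompose $W_n = 1 + \sum_{k=1}^n \Delta_k$ with
$$\Delta_k = \sum_{u \in \mathbb{T}_{k-1}} e^{S_u}(Y_u - 1), \qquad Y_u := \sum_i e^{L_i(u)},$$
so that conditionally on $(\mathcal{F}_{k-1}, \xi)$ the $\{Y_u\}$ are i.i.d.\ centered with moments determined by $\xi_{k-1}$. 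The von Bahr-Esseen inequality applied conditionally, then averaged over $\xi$ using independence of $\xi_{k-1}$ from $(\xi_0,\ldots,\xi_{k-2})$ and $\mathcal{F}_{k-1}$, yields
$$\mathbb{E}|\Delta_k|^p \leq 2\,\mathbb{E}\tilde Z_{k-1}(p)\cdot\mathbb{E}|W_1-1|^p = 2\rho_p^{k-1}\,\mathbb{E}|W_1-1|^p.$$
A second application of von Bahr-Esseen on the annealed martingale $(W_n)$ then gives $\mathbb{E}|W_n-1|^p \leq 2\sum_{k=1}^n \mathbb{E}|\Delta_k|^p$, uniformly bounded by a convergent geometric series.

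\emph{Sufficiency for $p > 2$---the main obstacle.} Here Burkholder's inequality $\mathbb{E}|W_n-1|^p \leq C_p\mathbb{E}(\sum_k \Delta_k^2)^{p/2}$ and conditional Rosenthal combine to give
$$\mathbb{E}_\xi[|\Delta_k|^p \mid \mathcal{F}_{k-1}] \leq C_p\bigl[\tilde Z_{k-1}(2)^{p/2}\sigma(\xi_{k-1})^p + \tilde Z_{k-1}(p)\mu(\xi_{k-1})^p\bigr],$$
with $\sigma^2(\xi) = \mathbb{E}_\xi(W_1-1)^2$ and $\mu^p(\xi) = \mathbb{E}_\xi|W_1-1|^p$. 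The diagonal term again decays geometrically like $\rho_p^{k-1}$ (after Jensen for $\mu$), but $\mathbb{E}\tilde Z_{k-1}(2)^{p/2}$ is the main obstacle: the naive bound $\tilde Z_{k-1}(2) \leq W_{k-1}^2$ closes back onto $\mathbb{E}W_{k-1}^p$ and is circular. I plan to break the circle using convexity of $t \mapsto \mathbb{E}m_0(t)$: since this function equals $1$ at $t=1$ and is $<1$ at $t=p$, it is $<1$ at $t=2$ as well, so a dyadic induction applying the analogous theorem at exponent $p/2$ to the martingale associated with parameter $t=2$ (whose partition function is $\tilde Z_n(2)$ with normalizer $P_n(2)$) yields the required geometric decay of $\mathbb{E}\tilde Z_{k-1}(2)^{p/2}$; Minkowski's inequality in $L^{p/2}$ applied to Burkholder's sum then produces the desired uniform $L^p$ bound on $(W_n)$.
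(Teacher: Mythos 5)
Your necessity argument and your $1<p\le 2$ sufficiency are essentially correct. The necessity mirrors the paper's: the smoothing identity for the limit $W$ together with strict convexity of $x\mapsto x^p$, with assumption (\ref{LPH2}) used to guarantee that the strict inequality has positive probability. For $1<p\le 2$, von Bahr--Esseen and the paper's Burkholder-plus-concavity step are interchangeable and both give $\mathbb E|\Delta_k|^p\le C[\mathbb Em_0(p)]^{k-1}\mathbb E|W_1-1|^p$, so this part is only cosmetically different from the paper.

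The sufficiency for $p>2$ has a genuine gap, and you actually put your finger on exactly the hard point but then dispose of it incorrectly. You need a summable bound on $\mathbb E\tilde Z_{k-1}(2)^{p/2}=\mathbb E\bigl[P_{k-1}(2)^{p/2}W_{k-1}(2)^{p/2}\bigr]$, and the proposed ``dyadic induction'' does not produce one. Applying the analogue of the theorem at exponent $p/2$ to the re-tilted walk $\bar L_i=2L_i-\log m_0(2)$ requires its own hypothesis $\mathbb E[m_0(p)/m_0(2)^{p/2}]<1$, which does \emph{not} follow from $\mathbb Em_0(p)<1$. A concrete counterexample already in a deterministic environment: take $N\equiv 2$, $L_1,L_2$ i.i.d.\ $N(\mu,\sigma^2)$ with $\mu=-\log2-\sigma^2/2$, $\sigma^2=0.3$, $p=3$. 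Then $m_0(1)=1$, $m_0(3)=e^{-2\log2+0.9}\approx0.61<1$, but $m_0(3)/m_0(2)^{3/2}=e^{0.45-\frac12\log 2}\approx1.11>1$, so $W_n(2)$ is not even bounded in $L^{3/2}$ although the theorem's hypotheses for $W_n$ in $L^3$ hold. Moreover, even when $\sup_n\mathbb E W_n(2)^{p/2}<\infty$ is available, it does not control the \emph{joint} moment $\mathbb E[P_n(2)^{p/2}W_n(2)^{p/2}]$, since $P_n(2)$ is random and correlated with $W_n(2)$ under the annealed law.

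What the paper does instead (Section \ref{LPBRWS3}) is work with the joint quantity $U_n^{(t)}(s,r)=\mathbb E[P_n(t)^sW_n(t)^r]$ directly: a size-biased change of measure (Section \ref{CMDPS3.1}) yields the recursive inequality of Lemma \ref{LPBRWL4.1}, which decrements the exponent $r$ by \emph{one} at each step (not by halving); iterating gives Lemma \ref{LPBRWL4.2}, and the log-convexity of $x\mapsto\mathbb Em_0(2)^xm_0(p-2x)$ (Lemma \ref{LPBRWL4.3}) identifies the geometric rate as $\max\{\mathbb Em_0(p),\mathbb Em_0(2)^{p/2}\}$, both strictly below $1$ under $\mathbb Em_0(p)<1$. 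Your Rosenthal step is a fine substitute for the paper's Jensen step in estimating the increment, but it does not bypass the joint-moment recursion; that recursion is the missing ingredient in your outline.
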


Theorem \ref{LPBRW1.3} coincide with a result of Liu \cite{liu2000} on branching random walk in a deterministic environment, and is an extension of a result of Guivarc'h \& Liu \cite{liu1} on branching process in a random environment. The same result is obtained in \cite{huang2} with a different approach.

For the exponential rate of the annealed $L^p$ convergence of $W_n$, we have the follow result.

\begin{thm}[Annealed $L^p$ convergence rate]\label{LPBRW1.4}
Assume (\ref{LPH1}), (\ref{LPH2}) and that $(\xi_n)$ are i.i.d..
Let $p>1$ and $\rho>1$.
\begin{itemize}
\item [(a)] If  $1<p<2$, $$ \mathbb{E}(\mathbb{E}_\xi W_1^r)^{p/r}<\infty\qquad \text{and}\qquad \rho<[\mathbb{E}m_0(r)^{p/r}]^{-1/p}$$
for some $r\in[p,2]$, then
$$W_n-W=o(\rho^{-n})\qquad \text{in $\mathbb P$-$L^p$.}$$
\item [(b)] If $p\geq2$ and  $\mathbb{E}W_1^p<\infty$, then
$$\limsup_{n\rightarrow\infty}\rho^n(\mathbb{E}|W_n-W|^p)^{1/p}\left\{\begin{array}{ll}
=0\quad  &\text{if}\;\rho<\rho_0,\\
>0\quad   &\text{if}\;\rho>\rho_0,
\end{array}
\right.$$
where $\rho_0=\min\{[\mathbb{E}m_0(p)]^{-1/p},[\mathbb{E}m_0(2)^{p/2}]^{-1/p}\}$.
\end{itemize}
\end{thm}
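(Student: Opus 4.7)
The argument rests on the branching identity
\[
W - W_n \;=\; \sum_{u \in \mathbb T_n} \tilde X_u(1)\bigl(W^{(u)} - 1\bigr),
\]
where, conditionally on $\mathcal F_n$, the random variables $\{W^{(u)} - 1\}_{u \in \mathbb T_n}$ are independent and centred, each with the quenched law of $W-1$ in the shifted environment $T^n\xi$. The plan is to apply the appropriate $p$-th moment inequality for sums of independent centred random variables to this identity, average under the quenched law, pass to the annealed expectation, and exploit the i.i.d.\ assumption on $(\xi_n)$ to factorise the result. The two regimes $1<p<2$ and $p\geq 2$ correspond to the two versions of such an inequality, von Bahr--Esseen and Burkholder--Rosenthal respectively.

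For part (a), the conditional von Bahr--Esseen inequality at exponent $r\in[p,2]$ gives
\[
\mathbb E_\xi\bigl(|W-W_n|^r \,\big|\, \mathcal F_n\bigr) \;\leq\; 2\sum_{u\in \mathbb T_n}\tilde X_u(1)^r\,\mathbb E_{T^n\xi}|W-1|^r.
\]
Two concavity-side applications of Jensen (since $p/r\leq 1$), combined with $\mathbb E_\xi\sum_u \tilde X_u(1)^r = P_n(r)$ (using $P_n(1)=1$) and the i.i.d.\ factorisation, yield
\[
\mathbb E|W-W_n|^p \;\leq\; 2^{p/r}\bigl[\mathbb E\,m_0(r)^{p/r}\bigr]^n\cdot\mathbb E\bigl[(\mathbb E_\xi|W-1|^r)^{p/r}\bigr].
\]
The right factor is finite via a standard comparison between the quenched $r$-th moments of $W-1$ and of $W_1$, using the hypothesis $\mathbb E(\mathbb E_\xi W_1^r)^{p/r}<\infty$. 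The assumption $\rho<[\mathbb E m_0(r)^{p/r}]^{-1/p}$ is exactly $\rho^p\mathbb E m_0(r)^{p/r}<1$, giving decay faster than $\rho^{-np}$.

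For the upper bound in part (b), the conditional Burkholder--Rosenthal inequality followed by the annealed expectation produces
\[
\mathbb E|W-W_n|^p \;\leq\; C_p\bigl([\mathbb E m_0(p)]^n + \mathbb E[\tilde Z_n(2)^{p/2}]\bigr),
\]
where $C_p$ absorbs the finite annealed $p$-th moment of $W-1$ supplied by Theorem~\ref{LPBRW1.3}. The first summand decays at rate $[\mathbb E m_0(p)]^n$. For the second, writing $\tilde Z_n(2)=P_n(2)W_n(2)$, conditioning on $\xi$ to extract $P_n(2)^{p/2}$, and using the pointwise bound $\tilde Z_n(2)=\sum_u e^{2S_u}\leq (\sum_u e^{S_u})^2 = W_n^2$ together with a uniform-in-$n$ annealed control on $W_n$ gives $\mathbb E[\tilde Z_n(2)^{p/2}]\leq C\,(\mathbb E m_0(2)^{p/2})^n$. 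Whenever $\rho<\rho_0$ both terms decay faster than $\rho^{-np}$.

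For the lower bound in part (b), the inequality $\mathbb E|W-W_n|^p+\mathbb E|W-W_{n+1}|^p\geq 2^{1-p}\mathbb E|W_{n+1}-W_n|^p$ reduces matters to the one-step increment $W_{n+1}-W_n=\sum_{u\in \mathbb T_n}\tilde X_u(1)(W_1^{(u)}-1)$, to which I apply the reverse Burkholder--Rosenthal inequality. Together with $\mathbb E[\tilde Z_n(2)^{p/2}]\geq (\mathbb E m_0(2)^{p/2})^n$ (Jensen applied to $\mathbb E_\xi W_n(2)^{p/2}\geq 1$, which holds for $p\geq 2$), this gives
\[
\mathbb E|W_{n+1}-W_n|^p \;\geq\; c_p\bigl([\mathbb E m_0(p)]^n\,\mathbb E|W_1-1|^p + (\mathbb E m_0(2)^{p/2})^n\,(\mathbb E|W_1-1|^2)^{p/2}\bigr).
\]
Assumption~(\ref{LPH2}) makes both $\mathbb E|W_1-1|^p$ and $\mathbb E|W_1-1|^2$ strictly positive, so $\rho>\rho_0$ forces $\rho^n(\mathbb E|W_{n+1}-W_n|^p)^{1/p}$ to be bounded away from zero. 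The chief obstacle is the sharp upper bound $\mathbb E[\tilde Z_n(2)^{p/2}]\leq C(\mathbb E m_0(2)^{p/2})^n$: since $W_n(2)$ and $P_n(2)$ are not independent under the annealed law, recovering this rate requires a uniform-in-$n$ annealed moment estimate for the auxiliary martingale $W_n(2)$, which is the subtlest step of the proof.
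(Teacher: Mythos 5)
Your high-level structure is the right one: Burkholder/von Bahr--Esseen for $1<p<2$, Burkholder--Rosenthal for $p\geq 2$, apply to the branching decomposition, factorise under the i.i.d.\ assumption, and extract the rate from the one-step increments. The lower bound in (b) is essentially the paper's argument. But there is a genuine gap in the upper bound of part (b), precisely at the spot you flag as ``the subtlest step.'' After the Rosenthal inequality you are left with the coupled annealed moment $\mathbb{E}\bigl[\tilde Z_n(2)^{p/2}\bigr]=\mathbb{E}\bigl[P_n(2)^{p/2}W_n(2)^{p/2}\bigr]$, and the two steps you propose for it do not deliver the geometric factor. The pointwise bound $\tilde Z_n(2)\leq W_n^2$ together with $\sup_n\mathbb{E}W_n^p<\infty$ gives only $\mathbb{E}\tilde Z_n(2)^{p/2}\leq C$, a constant, whereas you need $\mathbb{E}\tilde Z_n(2)^{p/2}\leq C\,[\mathbb{E}m_0(2)^{p/2}]^n$, which decays geometrically since $\mathbb{E}m_0(2)^{p/2}<1$. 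And a uniform-in-$n$ annealed bound on $\mathbb{E}W_n(2)^{p/2}$ would still not let you write $\mathbb{E}[P_n(2)^{p/2}W_n(2)^{p/2}]\leq \mathbb{E}P_n(2)^{p/2}\cdot\sup_n\mathbb{E}W_n(2)^{p/2}$, because $P_n(2)$ and $W_n(2)$ are not independent. This decoupling is exactly what the paper's machinery is built to handle: it introduces $U_n^{(t)}(s,r)=\mathbb{E}P_n(t)^sW_n(t)^r$, proves a recursive inequality for it (Lemma~\ref{LPBRWL4.1}) via the spine change of measure $\mathbb{Q}_\xi$ of Lemma~\ref{LPBRWL3.1}, and iterates to obtain the bound $U_n^{(2)}(p/2,p/2)\leq Cn^\gamma\bigl[\max\{\mathbb{E}m_0(p),\mathbb{E}m_0(2)^{p/2}\}\bigr]^n$ (Lemma~\ref{LPBRWL4.2} combined with the log-convexity Lemma~\ref{LPBRWL4.3}). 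Without something equivalent to this induction on the coupled moment, your upper bound does not close.

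A smaller issue in part (a): you invoke ``a standard comparison'' to pass from the hypothesis on $W_1$ to finiteness of $\mathbb{E}(\mathbb{E}_\xi|W-1|^r)^{p/r}$. This needs a short argument (e.g.\ apply Burkholder to the quenched martingale differences, use subadditivity of $x\mapsto x^{p/r}$, and then $\mathbb{E}m_0(r)^{p/r}<1$, which your $\rho$-hypothesis does supply). The paper sidesteps it entirely by working with the one-step martingale $\hat A_n(\rho)=\sum_{k\le n}\rho^k(W_{k+1}-W_k)$ of Alsmeyer et al.\ rather than $W-W_n$, so only $W_1$-moments ever appear; that is a real simplification worth adopting.
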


Under the conditions of  Theorem \ref{LPBRW1.4}, we can also obtain $W_n-W=o(\rho^{-n})$ a.s. in $\mathbb P_\xi$-$L^p$ for almost all $\xi$. However,  by  Jensen's inequality, one can see that $\frac{r}{p}\mathbb{E}\log\mathbb{E}_\xi W_1^r\leq\log \mathbb{E}(\mathbb{E}_\xi W_1^r)^{p/r}$ and $[\mathbb{E}m_0(r)^{p/r}]^{-1/p}\leq\exp(-\frac{1}{r}\mathbb{E}\log m_0(r))$.
So the moment conditions of Theorem \ref{LPBRW1.2} are weaker than those of Theorem \ref{LPBRW1.4}. If $p\geq2$, Theorem \ref{LPBRW1.4} (b) shows that under the moment condition $\mathbb{E}W_1^p<\infty$, the value $\rho_0$ defined above is the critical value for the $L^p$ convergence of $\rho^n(W_n-W)$ to $0$ under annealed law $\mathbb{P}$. Obviously, we have $\rho_0\leq \rho_c$. But here it is a pity that  we do not find the critical value for $p\in(1,2)$, in contrast to \cite{huang14} for branching process in a random environment.

\bigskip
\subsection{Uniform convergence}

We next consider the uniform convergence of  $W_n(t)$ to its limit $W(t)$. In  the deterministic environment case, such result was shown by Biggins \cite{b91, b92} and recently is generalized  by Attia \cite{na}.

Recall that $I=(t_-,t_+)$, where $t_+$ and $t_-$ are defined by (\ref{HT}).
If $t\in I$ and $\mathbb E W_1(t)\log^+W_1(t)<\infty$, then $ W(t)$ is non-degenerate. Similarly to \cite{b91, b92, na}, we consider the  uniform convergence of  $W_n(t)$ on subsets of $I$. Denote
$$
\underline m_0=\inf\limits_{t\in I}m_0(t),
$$
\begin{equation}\label{O1}
\Omega_1=int\{t\in\mathbb R: \mathbb E\log \mathbb E_\xi W_1(t)^\gamma \;\; \text{for some $\gamma >1$}\},
\end{equation}
$$\Omega_2=int\{t\in\mathbb R: \mathbb E \tilde Z_1(t)\log^+\tilde Z_1(t)<\infty\}.$$
Here and after we use the following usual notations:
\[\log^+x=\max(\log x, 0), \quad\log^-x=\max(-\log x, 0). \]

\begin{thm}[Quenched uniform convergence]\label{LPBRWU1}Assume that $\mathbb E\log^-\underline m_0<\infty$.
\begin{itemize}
\item[(a)] Let $K$ be a compact subset of $I\bigcap\Omega_1$. Then there exists a constant $p_K\in(1,2]$ such that $W_n(t)$ converges uniformly to $W(t)$ on $K$, almost surely and in $\mathbb P_\xi$-$L^p$ for almost all $\xi$ if $p\in[1,p_K]$.
\item[(b)]If $\underline m:=essinf \underline m_0>0$, then $W_n(t)$ also converges uniformly to $W(t)$ on any compact subset $K$ of $I\bigcap\Omega_2$, almost surely and in $\mathbb P_\xi$-$L^1$ for almost all $\xi$.
\end{itemize}
\end{thm}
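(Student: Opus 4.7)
The plan for both parts is to follow the analytic-extension strategy of Biggins \cite{b91, b92}: establish pointwise $L^p$ (resp.\ $L^1$) convergence on a real neighbourhood of $K$, extend $W_n$ to a holomorphic function on a complex strip around $K$, and then transfer pointwise to uniform convergence via Cauchy's integral formula.

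For part (a), since $\Omega_1$ is open and $K\subset I\cap\Omega_1$ is compact, a finite-covering argument produces an exponent $p_K\in(1,2]$ and an open neighbourhood $U\supset K$ with $\overline U\subset I\cap\Omega_1$ on which $\mathbb E\log\mathbb E_\xi W_1(t)^{p_K}<\infty$. For each $t\in U$ I apply Theorem \ref{LPBRW1.2}(a) (with $r=p_K$, $\rho=1$) to the BRWRE tilted by $\bar L_i(u)=tL_i(u)-\log m_n(t)$; the required growth condition reduces to $\bar\Lambda(p_K):=\Lambda(p_K t)-p_K\Lambda(t)<0$, which follows from $\bar\Lambda(1)=0$, the convexity of $\bar\Lambda$, and $\bar\Lambda'(1)=t\Lambda'(t)-\Lambda(t)<0$ (since $t\in I$), all uniformly in $t\in K$ upon shrinking $p_K$. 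This yields the pointwise convergence $W_n(t)\to W(t)$ in $\mathbb P_\xi$-$L^{p_K}$ for every $t\in U$ and a.e.\ $\xi$. Next I extend to the complex strip $D=\{z:\Re z\in U,\ |\Im z|<\delta(\xi)\}$, with $\delta(\xi)>0$ chosen via (\ref{H1}) and a first Borel--Cantelli argument so that $|m_k(z)/m_k(\Re z)|\ge 1/2$ simultaneously for all $k$ almost surely; then $W_n(z)=\tilde Z_n(z)/P_n(z)$ is holomorphic on $D$, and Cauchy's integral formula on a contour $\Gamma\subset D$ enclosing $K$ yields
\[
\sup_{t\in K}|W_n(t)-W(t)|\le C_K\sup_{\zeta\in\Gamma}|W_n(\zeta)-W(\zeta)|.
\]
The right-hand side is controlled by a finite $\varepsilon$-net on $\Gamma$, at each point of which pointwise $L^{p_K}$ convergence transfers to complex arguments through $|W_n(\zeta)|\le 2^n W_n(\Re\zeta)$, combined with uniform $L^{p_K}$ bounds on $W_n'(\zeta)$ obtained from Cauchy on a slightly larger contour. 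A Borel--Cantelli argument along a geometric subsequence upgrades this uniform $L^{p_K}$ convergence to almost sure uniform convergence.

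Part (b) substitutes the Kurtz--Kyprianou-type $L^1$ criterion for BRWRE (cf.\ \cite{biggins04, ku}) for the missing $L^p$ machinery, yielding pointwise convergence $W_n(t)\to W(t)$ in $\mathbb P_\xi$-$L^1$ for every $t\in I\cap\Omega_2$. The hypothesis $\underline m>0$ provides the deterministic uniform bound $P_n(t)\ge\underline m^{\,n}$ for all $t\in I$ and hence the pointwise estimate $|W_n(z)|\le\underline m^{-n}\tilde Z_n(\Re z)$ on a complex strip $D$ of deterministic width. Cauchy's formula then yields an almost-sure modulus-of-continuity estimate for $W_n$ on $K$, and this equicontinuity, combined with pointwise $L^1$ convergence on a countable dense subset of $K$, gives almost sure uniform convergence by an Egorov-type argument. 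Uniform $\mathbb P_\xi$-$L^1$ convergence follows by uniform integrability, with dominant $\underline m^{-n}\tilde Z_n(t^\ast)$ where $t^\ast=\max K$.

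The principal obstacle is the analytic extension in random environment: in part (a) one must verify that the random functions $m_k(\xi,z)$ stay simultaneously bounded away from $0$ for all $k$ on some $\xi$-dependent complex strip, which is genuinely new compared with Biggins' deterministic setting and requires an ergodic/Borel--Cantelli control on the ratios $m_k(\xi,z)/m_k(\xi,\Re z)$ uniformly in $k$. Once this is in place, the Cauchy-formula transfer from pointwise to uniform convergence is essentially classical.
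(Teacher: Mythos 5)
Your approach is genuinely different from the paper's and has gaps that I do not think can be closed as sketched. The paper avoids complex analysis entirely: it bounds $\sup_{t\in D}\bigl(\mathbb E_\xi|W_{n+1}(t)-W_n(t)|^p\bigr)^{1/p}$ directly via Burkholder's inequality (as in (\ref{que1})) and then controls the key random factor $\sup_{t\in D}\bigl[P_n(pt)/P_n(t)^p\bigr]$ by observing (Lemma \ref{UL1}) that $t\mapsto\log m_0(pt)-p\log m_0(t)$ is monotone on each half-line, so the supremum over $D^\pm$ is attained at an endpoint and the supremum of the product equals the product of the suprema. The ergodic theorem then gives geometric decay $\sup_{t\in D}[P_n(pt)/P_n(t)^p]\le a_D^{-n}$ eventually, and Lemma \ref{LPBRWL2.1} closes part (a). Part (b) uses the same real-variable machinery together with Biggins' (1991) truncation argument and Lemma \ref{UL3}.

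The central difficulty with your complex-analytic route is the control of $|m_k(\xi,z)|$ on a complex strip, which you yourself flag as the new ingredient but do not actually establish. Your proposed Borel--Cantelli argument cannot deliver ``$|m_k(z)/m_k(\Re z)|\ge 1/2$ simultaneously for all $k$ almost surely'': in a stationary and ergodic environment, $\mathbb{P}\bigl(|m_k(z)/m_k(\Re z)|<1/2\ \text{for some $z$ in the strip}\bigr)$ is constant in $k$, so Borel--Cantelli gives nothing unless that probability is zero, which is not a consequence of the theorem's hypotheses (the Laplace transform $m_0(\cdot)$ of a point process may well have complex zeros arbitrarily close to the real axis with positive probability). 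Even granting the ratio bound, the estimates $|W_n(z)|\le 2^n W_n(\Re z)$ in part (a) and $|W_n(z)|\le\underline m^{-n}\tilde Z_n(\Re z)$ in part (b) blow up exponentially in $n$; Cauchy's formula applied to them yields derivative bounds of the same exponential order, so they do not produce an equicontinuity estimate that survives $n\to\infty$, and the proposed dominant $\underline m^{-n}\tilde Z_n(t^*)$ for uniform integrability is not even bounded in expectation. To repair the complex route one would have to replace the $2^n$-type bound by a careful ergodic-theorem control of $\frac1n\sum_k\log|m_k(z)/m_k(\Re z)|$, showing it can be made arbitrarily small by shrinking the strip, and then balance that residual exponential against the exponential decay of $\mathbb E_\xi|W_n-W|^p$; none of this is in your sketch. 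The paper's real-variable argument sidesteps all of these issues.
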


If the environment random variables $(\xi_n)$ are i.i.d., Theorem \ref{LPBRWU1} have the following comparison under annealed law.

\begin{thm}[Annealed uniform convergence]\label{LPBRWU2}Assume that $\underline m=essinf \underline m_0>0$. Denote
$$I'=int\{t\in\mathbb R: \mathbb E\left[\frac{m_0(\gamma t)}{m_0(t)^\gamma}\right]<1\;\;\text{for some $\gamma >1$}\}.$$
Then $W_n(t)$  converges uniformly to $W(t)$ on any compact subset $K$ of $I\bigcap I'\bigcap\Omega_2$ in $\mathbb P$-$L^1$.

Moreover, set $$\Omega_1'=int\{t\in\mathbb R: \mathbb E \tilde Z_1(t)^\gamma<\infty \;\; \text{for some $\gamma >1$}\}.$$
If $K$ is a compact subset of $I\bigcap\Omega_1'$, then there exists a constant $p_K\in(1,2]$ such that $W_n(t)$ converges uniformly to $W(t)$ on $K$ in $\mathbb P$-$L^{p_K}$.
\end{thm}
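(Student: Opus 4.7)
The plan is to combine the quenched uniform convergence of Theorem \ref{LPBRWU1}(b) with $\mathbb{P}$-uniform integrability, via Vitali's theorem. Since in both assertions the compact $K$ lies in $I\cap\Omega_2$ (note $\Omega_1'\subset\Omega_2$ as $\mathbb{E}\tilde Z_1(t)^\gamma<\infty$ implies $\mathbb{E}\tilde Z_1(t)\log^+\tilde Z_1(t)<\infty$) and $\underline m>0$ is in force, Theorem \ref{LPBRWU1}(b) yields $W_n(t)\to W(t)$ uniformly on $K$ almost surely. To upgrade to $\mathbb{P}$-$L^p$, it is enough to show $\sup_n\mathbb{E}[\sup_{t\in K} W_n(t)^q]<\infty$ for some $q>p$.

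The pointwise $L^q$-bound comes from Theorem \ref{LPBRW1.3}, applied to the renormalized BRWRE introduced just after (\ref{LPH1}) with relative displacements $\bar L_i(u)=tL_i(u)-\log m_n(t)$ (whose martingale equals $W_n(t)$ and whose $\mathbb{E}\bar m_0(\gamma)=\mathbb{E}[m_0(\gamma t)/m_0(t)^\gamma]$): in the first assertion, compactness of $K\subset I'$ furnishes a common $\gamma>1$ with $\sup_{t\in K}\mathbb{E}[m_0(\gamma t)/m_0(t)^\gamma]<1$, yielding $\sup_n\mathbb{E}W_n(t)^\gamma<\infty$ pointwise; in the second, compactness of $K\subset\Omega_1'$ together with $\underline m>0$ gives $\sup_{t\in K}\mathbb{E}W_1(t)^\gamma<\infty$, and an iterated Topchii--Vatutin/Burkholder inequality (applied under $\mathbb{P}_\xi$ and then averaged in $\xi$) produces $\sup_n\mathbb{E}W_n(t)^{p_K}<\infty$ for some $p_K\in(1,\gamma]$. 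The passage from pointwise to uniform bounds exploits log-convexity of $t\mapsto\tilde Z_n(t)=\sum_u e^{tS_u}$, which gives $\sup_{t\in[\alpha,\beta]}\tilde Z_n(t)\leq\tilde Z_n(\alpha)+\tilde Z_n(\beta)$ on any subinterval; an $n$-adapted grid of spacing $\varepsilon_n\asymp 1/n$ on $K$, together with a first-order Taylor expansion of $\log m_k(t)$ and the ergodic theorem for $\frac1n\sum_{k<n}(\log m_k)'(t_i)\to\Lambda'(t_i)$, keeps the denominator ratios $P_n(t)/P_n(t_i)$ bounded independently of $n$, at the cost of $O(n)$ grid points.

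The main obstacle is precisely this comparison of $P_n$ across the grid: the naive estimate is exponential in $n|t-t_i|$, which forces the $n$-adapted grid above, whose $O(n)$ cardinality must be absorbed by slightly reducing the exponent from $\gamma$ to $p_K<\gamma$ via a union/H\"older bound. A cleaner alternative, inspired by Biggins \cite{b92} and Attia \cite{na}, is to use the holomorphic extension of $\tilde Z_n$ to a complex strip containing $K$ and invoke Cauchy's integral formula to bound $\sup_{t\in K}|\tilde Z_n(t)|^q$ by a contour integral of $|\tilde Z_n(z)|^q$, whose moments on the contour are controlled by the same $I'$/$\Omega_1'$ conditions extended by continuity.
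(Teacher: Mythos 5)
Your overall plan---quenched a.s.\ uniform convergence from Theorem~\ref{LPBRWU1}(b) plus $\mathbb P$-uniform integrability via Vitali---is a legitimate route, and it is genuinely different from the route the paper indicates. (The paper says the annealed case is proved ``almost in the same way'' as Theorem~\ref{LPBRWU1}: one replaces Lemma~\ref{UL1} by the annealed bound $\sup_{t\in D}\mathbb E[m_0(pt)/m_0(t)^p]<1$ stated in the remark after it, takes annealed rather than quenched expectations of the Burkholder bound~(\ref{que1}), and for the $L^1$ part repeats the truncation device of part~(b). That gives $\sum_n\sup_{t\in D}\|W_{n+1}(t)-W_n(t)\|_{\mathbb P\text{-}L^p}<\infty$ directly, with no uniform-integrability detour.) Against that background, two parts of your argument do not close.

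First, for the $L^1$ assertion you invoke Theorem~\ref{LPBRW1.3} to get $\sup_n\mathbb E W_n(t)^\gamma<\infty$ ``pointwise,'' crediting the $I'$ condition for $\mathbb E[m_0(\gamma t)/m_0(t)^\gamma]<1$. But Theorem~\ref{LPBRW1.3} also requires $\mathbb E W_1(t)^\gamma<\infty$, i.e.\ $\mathbb E\tilde Z_1(t)^\gamma<\infty$. The hypothesis for the first assertion is only $K\subset I\cap I'\cap\Omega_2$, and $\Omega_2$ gives merely $\mathbb E\tilde Z_1(t)\log^+\tilde Z_1(t)<\infty$, not any $\gamma$-th moment. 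So the claimed $L^\gamma$ bound on $W_n(t)$ is unavailable; the $L^1$ case must instead establish uniform integrability through an $L\log L$/truncation argument (exactly what the paper's reference to Biggins~\cite{b91} and Lemma~\ref{UL3} is for), not through a power-moment bound. Your application of Theorem~\ref{LPBRW1.3} is only legitimate under the second assertion's hypothesis $K\subset I\cap\Omega_1'$, where $\Omega_1'$ supplies $\mathbb E\tilde Z_1(t)^\gamma<\infty$ and the paper's observation $I\cap\Omega_1'\subset I'\cap\Omega_1'$ supplies the $I'$ side.

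Second, the passage from pointwise bounds to $\sup_n\mathbb E\bigl[\sup_{t\in K}W_n(t)^q\bigr]<\infty$ via an $n$-adapted grid does not work as described. With spacing $\varepsilon_n\asymp 1/n$ you indeed keep the ratios $P_n(t)/P_n(t_i)$ bounded, but you then face $N_n\asymp n$ grid points, and the union/H\"older step gives at best $\mathbb E\max_{i\le N_n}W_n(t_i)^p\lesssim (N_nC)^{p/\gamma}$, which grows like $n^{p/\gamma}$ for every $p>0$. Shrinking $p$ below $\gamma$ does not cancel the $O(n)$ cardinality; the resulting bound is still unbounded in $n$, so the Vitali step has no input. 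The Cauchy/analytic-extension argument you offer ``as a cleaner alternative'' is not an alternative --- it is the essential mechanism (it is how Biggins~\cite{b92} and Attia~\cite{na} move the $\sup_t$ inside the expectation at the cost of a contour constant), and it also carries its own subtlety in this model: $P_n(z)=\prod_i m_i(z)$ may vanish for complex $z$, so one must argue on a strip where the $m_i$ stay bounded away from zero. Without making that step precise, the moment bound is not established, and hence neither is the uniform integrability your proof relies on.
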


It is clear that $\Omega_1'\subset\Omega_2$, but there is no evident relation between $I$ and $I'$. By calculating the derivative of  $\mathbb E\left[\frac{m_0(\gamma t)}{m_0(t)^\gamma}\right]$ with respect to $\gamma$ and letting $\gamma =1$, we can see that $I\bigcap \Omega_1'\subset I'\bigcap \Omega_1'$.

\subsection{Moderate deviation}
Finally we state a moderate deviation principle about the counting measures $Z_n$. Recently, Huang \& Liu showed the large deviation principle (\cite{huang22} , Theorem 3.2) and central limit theorem (\cite{huang22} , Theorem 7.1) about $Z_n$, which reflect the asymptotic properties of normalized measure $\frac{ Z_n(n\cdot)}{Z_n(\mathbb R)}$ and $\frac{Z_n(b_n\cdot)}{Z_n(\mathbb R)}$ (with some $b_n$ satisfies that $b_n/\sqrt{n}$ goes to a positive limit) . We want to establish the corresponding moderate deviation principle.

Let $(a_n)$ be a sequence of positive numbers satisfying
\begin{equation}
\frac{a_n}{n}\rightarrow0\qquad \text{and}\qquad \frac{a_n}{\sqrt{n}}\rightarrow\infty.
\end{equation}
We are interested in the asymptotic properties of normalized measure $\frac{ Z_n(a_n \cdot)}{Z_n(\mathbb R)}$.

\begin{thm}[Moderate deviation principle]\label{MDP}
Write $\pi_{0}=m_{0}(0)$. Assume that
$\|\frac{1}{\pi_0}\mathbb{E}_\xi\sum\limits_{i=1}^Ne^{\delta |L_i|}\|_\infty:=esssup\frac{1}{\pi_0}\mathbb{E}_\xi\sum\limits_{i=1}^Ne^{\delta |L_i|}<\infty$ for some
$\delta>0$
 and $\mathbb E_\xi \sum\limits_{i=1}^NL_i=0$ a.s.. If $0\in \Omega_1$, 
then  the sequence of finite measures
$A \mapsto  {Z_n(a_nA)}$ satisfies a principle of moderate deviation with
rate function $\frac{x^2}{2\sigma^2} $: for each measurable subset $A$ of
$\mathbb{R}$,
\begin{eqnarray*}\label{LDP1}
   - \frac{1}{2\sigma^2}\inf_{x\in A^o} x^2
   &\leq& \liminf_{n\rightarrow \infty}
             \frac{n}{a_n^2}  \log \frac{Z_n (a_nA)}{Z_n(\mathbb R)} \\
  & \leq &  \limsup_{n\rightarrow \infty}
              \frac{n}{a_n^2}  \log \frac{Z_n (a_nA)}{Z_n(\mathbb R)}
   \leq - \frac{1}{2\sigma^2}\inf_{x\in \bar A} x^2 \\
\end{eqnarray*}
a.s. on $\{Z_n(\mathbb R)\rightarrow\infty\}$, where  $\sigma^2=\mathbb E\left[\frac{1}{\pi_0}\sum\limits_{i=1}^NL_i^2\right]$,  $A^o$ denotes the interior of $A$, and $\bar A$ its closure.
\end{thm}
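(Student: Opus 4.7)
The plan is to reduce the MDP to a G\"artner--Ellis-type computation at the moderate scale. Setting $t_n:=\lambda a_n/n$, the crux is to establish
\begin{equation*}
\Phi(\lambda):=\lim_{n\to\infty}\frac{n}{a_n^2}\log\frac{\tilde Z_n(\lambda a_n/n)}{Z_n(\mathbb R)}=\frac{\lambda^2\sigma^2}{2}\qquad (\lambda\in\mathbb R)
\end{equation*}
almost surely on $\{Z_n(\mathbb R)\to\infty\}$. The Fenchel--Legendre transform of $\Phi$ is exactly $x\mapsto x^2/(2\sigma^2)$, so once $\Phi$ is identified, the MDP follows by a pathwise G\"artner--Ellis argument applied to the random probability measures $\mu_n:=Z_n(a_n\,\cdot)/Z_n(\mathbb R)$.

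To verify $\Phi$, factorize $\tilde Z_n(t)=P_n(t)W_n(t)$ to split
\begin{equation*}
\log\frac{\tilde Z_n(t_n)}{Z_n(\mathbb R)}=\sum_{k=0}^{n-1}\bigl[\log m_k(t_n)-\log m_k(0)\bigr]+\log\frac{W_n(t_n)}{W_n(0)}.
\end{equation*}
Since $t_n\to 0$ and the uniform exponential-moment hypothesis makes $t\mapsto\log m_k(t)$ smooth on $(-\delta,\delta)$ with derivatives essentially bounded uniformly in $k$, a Taylor expansion at $0$ combined with $m_k'(0)=\mathbb E_\xi\sum_i L_i(u)=0$ a.s.\ (for $|u|=k$) yields
\begin{equation*}
\log m_k(t_n)-\log m_k(0)=\tfrac12\sigma_k^2\,t_n^2+O(t_n^3),\qquad \sigma_k^2:=m_k''(0)/m_k(0),
\end{equation*}
with the remainder uniform in $k$. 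Multiplying by $n/a_n^2$, the quadratic piece becomes $(\lambda^2/2)\cdot n^{-1}\sum_{k<n}\sigma_k^2$, which tends to $\lambda^2\sigma^2/2$ by Birkhoff's ergodic theorem (using that $(\sigma_k^2)$ is stationary and integrable thanks to the exponential moment assumption), while the cubic piece is $O(a_n/n)\to 0$. For the martingale term, $0\in\Omega_1\cap I$ together with Theorem~\ref{LPBRWU1}(a) give uniform convergence $W_n(t)\to W(t)$ on a compact neighborhood of $0$, so $W_n(t_n)\to W(0)$ a.s.; the Kesten--Stigum hypothesis (\ref{ASS}) ensures $W(0)>0$ on $\{Z_n(\mathbb R)\to\infty\}$, hence $\log W_n(t_n)-\log W_n(0)$ is bounded and is killed by the factor $n/a_n^2\to 0$.

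Given $\Phi(\lambda)=\lambda^2\sigma^2/2$, the upper bound on $[y,\infty)$ with $y>0$ is an immediate Chebyshev estimate
\begin{equation*}
\frac{Z_n(a_n[y,\infty))}{Z_n(\mathbb R)}\leq e^{-\lambda a_n^2 y/n}\cdot\frac{\tilde Z_n(\lambda a_n/n)}{Z_n(\mathbb R)}\qquad(\lambda>0),
\end{equation*}
which yields $-\lambda y+\lambda^2\sigma^2/2$ and then $-y^2/(2\sigma^2)$ upon optimizing in $\lambda$; a symmetric tilt with $\lambda<0$ handles $(-\infty,-y]$, and a standard partition argument extends both bounds to arbitrary closed sets. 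The matching lower bound on an open interval around $y$ is obtained by an exponential change of measure at the tilt $\lambda=y/\sigma^2$: the convergence of $\Phi$ at perturbed values $\lambda\pm\eta$ shows that the tilted random measure concentrates near $y$ at the MDP scale. The principal technical obstacle is the uniform-in-$k$ Taylor control of $\log m_k$: the cubic remainder must be dominated by a stationary, integrable process so that its ergodic contribution genuinely vanishes at scale $a_n^2/n$, and this is precisely where the $L^\infty$ exponential moment hypothesis is indispensable.
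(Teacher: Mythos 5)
Your proposal is correct and takes essentially the same route as the paper: factorize $\tilde Z_n(t)=P_n(t)W_n(t)$, handle the deterministic part $\sum_k\bigl[\log m_k(t_n)-\log m_k(0)\bigr]$ by a Taylor expansion around $0$ using $m_k'(0)=0$ together with the ergodic theorem (the exponential-moment hypothesis making $\sigma_k^2$ bounded and hence integrable, and the $O(t_n^3)$ remainders uniformly controlled), and kill the $\log W_n(t_n)-\log W_n(0)$ term by the uniform convergence from Theorem~\ref{LPBRWU1} at $0\in I\cap\Omega_1$, with $W(0)>0$ on $\{Z_n(\mathbb R)\to\infty\}$ by~(\ref{ASS}). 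The only cosmetic differences are that the paper first proves the MDP for the quenched means $\mathbb{E}_\xi Z_n(a_n\cdot)/\mathbb{E}_\xi Z_n(\mathbb R)$ as a separate theorem (carrying out the two-level expansion of $\log(1+\Delta_{n,i})$ and then of $m_i$ explicitly) and then superimposes the $W_n$ correction, and that the paper applies the G\"artner--Ellis theorem directly rather than redoing the Chebyshev/tilting steps by hand; it also takes care to observe that $\underline m_0=\pi_0$ (log-convexity of $m_0$ with $m_0'(0)=0$) so that the hypothesis $\mathbb E\log^-\underline m_0<\infty$ of Theorem~\ref{LPBRWU1} is actually met, a point you should make explicit.
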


\bigskip

The rest part of the paper is arranged as follows. We first study the $L^p$ convergence and its exponential rate of the martingale $W_n$ in Section \ref{LPBRWS2} under quenched law and in Section \ref{LPBRWS3} under annealed law. Then we prove the uniform convergence of the martingale $W_n(t)$ in Section \ref{LPBRWS4}. Finally, in Section \ref{LPBRWS5}, we consider moderate deviations related to the counting measures $Z_n$.

\section{Quenched $L^p$ convergence; proof of Theorem \ref{LPBRW1.2}}\label{LPBRWS2}
In this section, we shall study the $L^p$ convergence of $W_n$ under quenched law $\mathbb{P}_\xi$ and its exponential rate.  To prove the results about the quenched convergence, we need the following lemma.

\begin{lem}\label{LPBRWL2.1}(\cite{huang14}, Lemma 3.1)
Let $(\alpha_n,\beta_n)_{n\geq0}$ be a stationary and ergodic sequence of non-negative random variables. If $\mathbb{E}\log\alpha_0<0$ and  $\mathbb{E}\log^+\beta_0<\infty$, then
\begin{equation}\label{CRE2.2.1}
\sum_{n=0}^{\infty}\alpha_0\cdots\alpha_{n-1}\beta_n<\infty\quad a.s..
\end{equation}
Conversely, if $\mathbb{E}|\log\beta_0|<\infty$, then (\ref{CRE2.2.1}) implies that $\mathbb{E}\log\alpha_0\leq0$.
\end{lem}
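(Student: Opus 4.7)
The plan is to reduce both directions to Birkhoff's ergodic theorem applied to the stationary ergodic sequence $(\log \alpha_n, \log \beta_n)$, combined with the standard fact that an integrable stationary sequence $X_n$ satisfies $X_n/n\to 0$ a.s. Throughout, I would write $T_n=\alpha_0\cdots\alpha_{n-1}\beta_n$ and look at $\frac{1}{n}\log T_n=\frac{1}{n}\sum_{k=0}^{n-1}\log\alpha_k+\frac{1}{n}\log\beta_n$, so that the a.s. behavior of the series reduces to the a.s. behavior of arithmetic means.

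For the direct implication, I would first apply Birkhoff to the stationary ergodic sequence $(\log\alpha_n)$, which is well defined as an extended real integrable random variable under $\mathbb{E}\log\alpha_0<0$, to obtain $\tfrac{1}{n}\sum_{k=0}^{n-1}\log\alpha_k\to \mathbb{E}\log\alpha_0<0$ almost surely. Next I would control the $\beta_n$ contribution by bounding $\log\beta_n\le\log^+\beta_n$ and using that $(\log^+\beta_n)$ is stationary and integrable, so by the Borel--Cantelli lemma applied to $\{\log^+\beta_n>\varepsilon n\}$ (whose probabilities are summable since $\sum_n\PP(\log^+\beta_0>\varepsilon n)\le\varepsilon^{-1}\mathbb{E}\log^+\beta_0<\infty$), one gets $\log^+\beta_n/n\to 0$ almost surely. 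Combining the two gives $\limsup_n \tfrac{1}{n}\log T_n\le \mathbb{E}\log\alpha_0<0$, and fixing any $c$ with $0<c<-\mathbb{E}\log\alpha_0$ yields $T_n\le e^{-cn}$ for all $n$ large enough almost surely, whence summability.

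For the converse, suppose (\ref{CRE2.2.1}) holds. Then $T_n\to 0$ a.s., hence $\log T_n\le 0$ eventually, so $\limsup_n \tfrac{1}{n}\log T_n\le 0$ a.s. Under the stronger hypothesis $\mathbb{E}|\log\beta_0|<\infty$, the same Borel--Cantelli argument applied to $|\log\beta_n|$ gives $\tfrac{1}{n}\log\beta_n\to 0$ a.s., and Birkhoff again yields $\tfrac{1}{n}\sum_{k=0}^{n-1}\log\alpha_k\to \mathbb{E}\log\alpha_0$ a.s. (here $\mathbb{E}\log\alpha_0$ is a priori in $[-\infty,+\infty]$, but a short argument using the decomposition $\log=\log^+-\log^-$ shows it is well defined). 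Passing to the limit in $\tfrac{1}{n}\log T_n\le o(1)$ produces $\mathbb{E}\log\alpha_0\le 0$, as required.

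The only delicate point is the step $\log\beta_n/n\to 0$, which must be made to hold under the weaker one-sided moment assumption $\mathbb{E}\log^+\beta_0<\infty$ for the direct part; the rest is essentially bookkeeping with Birkhoff's theorem. Since the stationarity and ergodicity of $(\log\alpha_n)$ and $(\log\beta_n)$ are inherited from the joint ergodicity of $(\alpha_n,\beta_n)$ assumed in the statement, there are no measurability or ergodicity obstructions, and the proof is effectively a two-line consequence of Birkhoff once this Borel--Cantelli lemma is in hand.
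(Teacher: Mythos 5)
The paper itself gives no proof of this lemma: it is quoted verbatim from Huang and Liu \cite{huang14} (Lemma 3.1), so there is no in-paper argument to compare against. Your proof --- setting $T_n=\alpha_0\cdots\alpha_{n-1}\beta_n$, applying Birkhoff's theorem to $\frac1n\sum_{k=0}^{n-1}\log\alpha_k$, getting $\log^+\beta_n/n\to0$ a.s. from the Borel--Cantelli tail bound $\sum_n\PP(\log^+\beta_0>\varepsilon n)\le\varepsilon^{-1}\mathbb{E}\log^+\beta_0$, and concluding exponential decay of $T_n$, with the converse obtained from $T_n\to0$ together with $\log\beta_n/n\to0$ --- is correct and is precisely the standard argument for this result. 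The one imprecision is in the converse: convergence of the series does not by itself show that $\mathbb{E}\log\alpha_0$ is well defined (both $\mathbb{E}\log^+\alpha_0$ and $\mathbb{E}\log^-\alpha_0$ could be infinite); the clean formulation is the contrapositive, namely that if $\mathbb{E}\log\alpha_0>0$ (which entails $\mathbb{E}\log^-\alpha_0<\infty$, so the one-sided Birkhoff theorem applies) then $\frac1n\sum_{k=0}^{n-1}\log\alpha_k\to\mathbb{E}\log\alpha_0>0$ a.s., contradicting the bound $\limsup_n\frac1n\log T_n\le0$ that you derived --- a cosmetic fix rather than a gap.
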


Recall that $W_n=W_n(1)$. To estimate the exponential rate of $W_n$, we consider the series introduced by Alsmeyer et al.  \cite{IK}:
\begin{equation}
A=A(\rho)=\sum_{n=0}^{\infty}\rho^n(W-W_n)\quad(\rho>1),
\end{equation}
\begin{equation}
\hat{A}_n=\hat{A}_n(\rho)=\sum_{k=0}^{n}\rho^{k}(W_{k+1}-W_k) \quad(\rho\geq1),
\end{equation}
\begin{equation}\hat{A}=\hat{A}(\rho)=\sum_{n=0}^{\infty}\rho^{n}(W_{n+1}-W_n)=\lim_{n\rightarrow\infty}\hat{A}_n \quad(\rho\geq1).
\end{equation}
According to (\cite{IK}, Lemma 3.1), with the same $\rho>1$,
 $A$ and $\hat A$ have the same convergence in the sense a.s. and in $L^p$ under $\mathbb{P}_\xi$ or $\mathbb{P}$. Since  $W_n$ is a martingale  under both laws $\mathbb{P}_\xi$ and $\mathbb{P}$, the same is true for $\hat A_n$ (but with respect to the filtration $\mathcal{F}_{n+1}$). In particular, if $\rho=1$, one can see that $\hat A_{n}=W_{n+1}-1$. Therefore we can study the convergence of $\hat A$ by  Doob's convergence theorems for martingales, which means that we should show a uniform upper bound for the $p$-th moment of $\hat A_n$ under $\mathbb{P}_\xi$ for quenched case and under $\mathbb{P}$ for annealed case. To this end, we will use  Bukholder's inequality as the basic tool. We mention that our approaches  are very similar to Huang \& Liu \cite{huang14} and Alsmeyer et al.  \cite{IK}, but the method of measure change for the annealed case would be heuristic.

\begin{lem}[Burkholder's inequality, see e.g. \cite{chow}]\label{CRL1.3.1}
Let $\{S_n\}$ be a $L^1$ martingale with $S_0=0$. Let $Q_n=(\sum\limits_{k=1}^{n}(S_k-S_{k-1})^2)^{1/2}$ and
$Q=(\sum\limits_{n=1}^{\infty}(S_n-S_{n-1})^2)^{1/2}$. Then $\forall p>1$,
\[c_p\parallel Q_n\parallel_p\leq\parallel S_n\parallel_p\leq C_p\parallel Q_n\parallel_p,\]
\[c_p\parallel Q\parallel_p\leq\sup_n\parallel S_n\parallel_p\leq C_p\parallel Q\parallel_p,\]
where $c_p=(p-1)/18p^{3/2}$, $C_p=18p^{3/2}/(p-1)^{1/2}$.
\end{lem}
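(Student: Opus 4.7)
The plan is to prove the classical Burkholder inequality in three stages: (i) reduce to a comparison of the maximal function with the square function via Doob's inequality, (ii) establish Burkholder's good-$\lambda$ inequality, and (iii) integrate. Since the full proof is in the cited reference \cite{chow}, the purpose of this sketch is to isolate the essential mechanisms.

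First, I would introduce the maximal function $S_n^{*}=\max_{k\le n}|S_k|$. For $p>1$, Doob's maximal inequality yields
\[
\|S_n\|_p \;\le\; \|S_n^{*}\|_p \;\le\; \tfrac{p}{p-1}\|S_n\|_p,
\]
which reduces both directions of the stated inequality to a two-sided comparison of $\|S_n^{*}\|_p$ with $\|Q_n\|_p$, up to absorbing the Doob constant $p/(p-1)$ into $c_p$ and $C_p$.

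The core of the argument is Burkholder's good-$\lambda$ inequality for the pair $(S_n^{*},Q_n)$: for every $\beta>1$, $\delta>0$, and $\lambda>0$,
\[
\PP\!\bigl(S_n^{*} > \beta\lambda,\; Q_n \le \delta\lambda\bigr)
  \;\le\; \frac{\delta^{2}}{(\beta-1)^{2}}\,\PP(S_n^{*} > \lambda).
\]
I would prove this by stopping at $\sigma=\inf\{k:|S_k|>\lambda\}$ and $\tau=\inf\{k:Q_{k+1}>\delta\lambda\}$ and considering the stopped increment $S'_{j}=S_{(\sigma+j)\wedge\tau}-S_\sigma$. Its differences remain martingale differences in the shifted filtration, so by $L^{2}$ orthogonality $\E[(S'_{j})^{2}\mid\mathcal{F}_\sigma]\le(\delta\lambda)^{2}$ on $\{\sigma\le n\}$, while on the event in question some $|S'_{j}|$ exceeds $(\beta-1)\lambda$; a conditional Chebyshev bound then gives the inequality. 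The analogous good-$\lambda$ inequality with the roles of $S_n^{*}$ and $Q_n$ swapped is obtained by applying the same stopping-time technique to the predictable increasing process $\langle S\rangle_n=\sum_{k\le n}(S_k-S_{k-1})^{2}$, using that $\sum_{k\le n}d_k\,d_k'$ is a martingale whenever $d'_{k}$ is $\mathcal{F}_{k-1}$-measurable and square integrable.

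Multiplying the good-$\lambda$ inequality by $p\lambda^{p-1}$ and using the layer-cake representation $\E(S_n^{*})^{p}=p\int_0^\infty\lambda^{p-1}\PP(S_n^{*}>\lambda)\,d\lambda$, I obtain
\[
\E(S_n^{*})^{p} \;\le\; \beta^{p}\,\frac{\delta^{2}}{(\beta-1)^{2}}\,\E(S_n^{*})^{p} \;+\; C(\beta,\delta,p)\,\E Q_n^{p}.
\]
Choosing $\delta$ small enough that $\beta^{p}\delta^{2}/(\beta-1)^{2}<1$ absorbs the first term into the left-hand side, giving $\|S_n^{*}\|_p\le C_p\|Q_n\|_p$ with an explicit $C_p$ depending on the chosen $(\beta,\delta)$. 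The reverse bound $\|Q_n\|_p\le c_p^{-1}\|S_n^{*}\|_p$ follows by the symmetric argument. The infinite-time statement is then immediate from the finite-$n$ case by Fatou and monotone convergence, since $Q_n\uparrow Q$.

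The main obstacle is the quantitative bookkeeping required to recover the sharp orders $c_p=(p-1)/18p^{3/2}$ and $C_p=18p^{3/2}/(p-1)^{1/2}$. The optimization of $(\beta,\delta)$ in the integration step must be carried out precisely, and the resulting constant must be combined with the Doob factor $p/(p-1)$ in the reduction of step (i) so that the final powers of $p$ and $p-1$ match the statement; this is classical but delicate near $p=1$ and for $p\to\infty$.
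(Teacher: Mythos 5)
The paper gives no proof of this lemma at all — it is quoted from Chow and Teicher — so the only question is whether your good-$\lambda$ sketch is itself sound, and it has a genuine gap at its central step. The time $\tau=\inf\{k:Q_{k+1}>\delta\lambda\}$ is not a stopping time: $\{\tau\le k\}=\{Q_{k+1}>\delta\lambda\}$ is $\mathcal{F}_{k+1}$-measurable, not $\mathcal{F}_k$-measurable, because the square function is adapted but not predictable (for the same reason, describing $\langle S\rangle_n=\sum_{k\le n}(S_k-S_{k-1})^2$ as a ``predictable increasing process'' is incorrect; the predictable object is $\sum_{k\le n}\E[(S_k-S_{k-1})^2\mid\mathcal{F}_{k-1}]$). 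Consequently $S'_j=S_{(\sigma+j)\wedge\tau}-S_\sigma$ is not a martingale, and the conditional $L^2$ orthogonality plus Chebyshev step that drives your good-$\lambda$ inequality is unjustified as written. The standard repair is not mere bookkeeping: one replaces stopping by a martingale transform with the predictable multiplier $\mathbf{1}_{\{Q_{k-1}\le\delta\lambda\}}$ (together with indicators locating $k$ between the entry times past $\lambda$ and past $\beta\lambda$), which leaves one uncontrolled final increment; this forces the maximal jump $d^*=\max_k|S_k-S_{k-1}|$ into the good-$\lambda$ event (harmless here since $d^*\le Q_n$, but it must be handled) and yields the overshoot-corrected factor $\delta^2/(\beta-1-\delta)^2$ rather than your $\delta^2/(\beta-1)^2$. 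The swapped inequality controlling $Q_n$ by $S_n^*$ has the same defect, since it is based on the same mistaken predictability claim. Without these points the good-$\lambda$ inequalities, the heart of your argument, are not established.

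Second, the lemma asserts the specific constants $c_p=(p-1)/18p^{3/2}$ and $C_p=18p^{3/2}/(p-1)^{1/2}$, and you concede you do not recover them. Even after optimizing $(\beta,\delta)$, the good-$\lambda$ integration produces constants of a different shape; the quoted values come from Burkholder's original argument as presented in the cited reference (an elementary iteration built on Doob's decomposition and the $p=2$ case), not from a good-$\lambda$ scheme. Since the paper uses the lemma only through the existence of some finite $c_p,C_p$, your route, once the stopping-time issue is properly fixed, would suffice for every application in the paper, but it does not prove the lemma with the constants as stated.
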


Applying Burkholder's inequality, we can obtain the  moment results of $\hat A_n$ for  $1<p\leq 2$.

\begin{pr}[Quenched moments of $\hat A_n$: case $1<p\leq 2$]\label{LPBRW2.1}Assume (\ref{LPH1}).
Let $1<p\leq2$ and $\rho\geq1$.  If $\mathbb{E}\log\mathbb{E}_\xi W_1^r<\infty$ and $ \rho<\exp(-\frac{1}{r}\mathbb{E}\log m_0(r))$
for some $r\in[p,2]$, then  $\sup_n\mathbb E_\xi |\hat A_n|^p<\infty$ a.s.
\end{pr}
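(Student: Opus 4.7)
The strategy is to apply Burkholder's inequality (Lemma~\ref{CRL1.3.1}) twice: once at the level of the martingale $\hat A_n$ itself, and once within each increment $W_{k+1}-W_k$, viewed as a sum of conditionally independent centered random variables indexed by the particles of generation~$k$. This reduces the problem to verifying the convergence of a series of the type treated in Lemma~\ref{LPBRWL2.1}.

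First, since $\{\hat A_n\}$ is a $\mathbb P_\xi$-martingale (with respect to $\mathcal F_{n+1}$) with differences $\rho^k(W_{k+1}-W_k)$, Burkholder's inequality combined with the elementary bound $(\sum_k d_k^2)^{p/2}\leq \sum_k|d_k|^p$ (valid because $p\leq 2$) yields
$$\mathbb E_\xi|\hat A_n|^p\leq C_p\sum_{k=0}^{n}\rho^{pk}\,\mathbb E_\xi|W_{k+1}-W_k|^p.$$
I would then apply Jensen's inequality with ratio $r/p\geq 1$ to upper bound each term by $(\mathbb E_\xi|W_{k+1}-W_k|^r)^{p/r}$, which brings the exponent $r$ appearing in the hypothesis into the picture.

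Next, I would write $W_{k+1}-W_k=\sum_{u\in\mathbb T_k}e^{S_u}\Delta_u$ with $\Delta_u=\sum_{i=1}^{N(u)}e^{L_i(u)}-1$; given $\mathcal F_k$ these summands are independent with conditional mean zero (because $m_k(1)=1$ a.s., by (\ref{LPH1}) and stationarity), and the conditional law of each $\Delta_u$ depends only on $\xi_k$. A second Burkholder application, together with $r/2\leq 1$, then gives
$$\mathbb E_\xi|W_{k+1}-W_k|^r\leq C_r\,P_k(r)\,\beta_k,$$
where $\beta_k$ is a measurable function of $\xi_k$ having the same law as $\mathbb E_\xi|W_1-1|^r$. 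Substituting back, the quantity to control becomes a constant multiple of $\sum_k \alpha_0\cdots\alpha_{k-1}\,\tilde\beta_k$ with $\alpha_i=\rho^p m_i(r)^{p/r}$ and $\tilde\beta_k=\beta_k^{p/r}$, which is precisely the format of Lemma~\ref{LPBRWL2.1}.

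Finally, I would verify the two hypotheses of Lemma~\ref{LPBRWL2.1}. The condition $\mathbb E\log\alpha_0<0$ reads $p\log\rho+(p/r)\mathbb E\log m_0(r)<0$, which is exactly the standing assumption $\rho<\exp(-\tfrac{1}{r}\mathbb E\log m_0(r))$. For $\mathbb E\log^+\tilde\beta_0<\infty$, I would exploit $\mathbb E_\xi W_1=1$ and Jensen to get $\mathbb E_\xi W_1^r\geq 1$; the inequality $|W_1-1|^r\leq 2^{r-1}(W_1^r+1)$ then yields $\mathbb E_\xi|W_1-1|^r\leq 2^r\mathbb E_\xi W_1^r$, so the integrability of the logarithm follows from the assumption $\mathbb E\log\mathbb E_\xi W_1^r<\infty$. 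The main obstacle is essentially the matching of exponents: the second Burkholder step must be carried out at exponent $r$ rather than $p$, for otherwise the product would involve $m_i(p)$ and produce the wrong exponential rate; passing from $p$ to $r$ via Jensen before the particle-level inequality is the step that makes the hypothesis $\rho<\exp(-\tfrac{1}{r}\mathbb E\log m_0(r))$ precisely what is needed to apply Lemma~\ref{LPBRWL2.1}.
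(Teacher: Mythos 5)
Your proof is correct and follows essentially the same route as the paper's: two applications of Burkholder's inequality together with the concavity/subadditivity bounds reduce the problem to the a.s.\ convergence of the series $\sum_k\rho^{pk}P_k(r)^{p/r}\bigl(\mathbb E_{T^k\xi}|W_1-1|^r\bigr)^{p/r}$, which Lemma~\ref{LPBRWL2.1} supplies under the stated hypotheses. The only difference is cosmetic: the paper applies Burkholder at exponent $p$ to $W_{k+1}-W_k$ and then converts to exponent $r$ via subadditivity of $x^{r/2}$ and Jensen with $x^{p/r}$ concave, whereas you first pass from the $p$-th to the $r$-th moment of the increment and then apply Burkholder at exponent $r$; both orderings produce the identical bound on $\mathbb E_\xi|W_{k+1}-W_k|^p$.
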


\begin{proof}Notice that
$$W_{n+1}-W_n=\sum_{u\in\mathbb T_n}\tilde X_u(W_{1,u}-1),$$
where we write $\tilde X_u=\tilde X_u(1)$ for short, and under quenched law $\mathbb P_\xi$, $\{W_{k,u}(t)\}_{|u|=n}$ are i.i.d. and independent of $\mathcal F_n$ with common distribution determined by $\mathbb P_\xi(W_{k,u}(t)\in \cdot)=\mathbb P_{T^n\xi}(W_k(t)\in\cdot)$. The notation $T$ represents the shift operator: $T^n\xi=(\xi_n,\xi_{n+1},\cdots)$ if $\xi=(\xi_0,\xi_1,\cdots)$. Applying Burkholder's inequality to $W_{n+1}-W_n$, and noticing the concavity of
$x^{p/2}$, $x^{r/2}$ and $x^{p/r}$, we have
\begin{eqnarray}\label{que1}
\mathbb{E}_\xi|W_{n+1}-W_n|^p&\leq& C\mathbb{E}_\xi\left(\sum_{u\in\mathbb T_n}\tilde X_u^2(W_1(u)-1)^2\right)^{p/2}\nonumber\\
&\leq&C\left(\mathbb{E}_\xi\sum_{u\in\mathbb T_n}\tilde X_u^r|W_1(u)-1|^r\right)^{p/r}\nonumber\\
&=&C P_n(r)^{p/r}\left(\mathbb{E}_{T^n\xi}|W_1-1|^r\right)^{p/r},
 \end{eqnarray}
where $C$ is positive constant, and in general, it does not stand for the same constant throughout.
Noticing (\ref{que1}), and applying again Burkholder's inequality to $\hat A_n$ gives
\begin{eqnarray*}
\sup_n\mathbb{E}_\xi|\hat{A}_n|^p
&\leq&C\mathbb{E}_\xi\left(\sum_{n=0}^{\infty}\rho^{2n}(W_{n+1}-W_n)^2\right)^{p/2}\\
&\leq&C\sum_{n=0}^{\infty}\rho^{pn}\mathbb{E}_\xi|W_{n+1}-W_n|^p\\
&\leq&C\sum_{n=0}^{\infty}\rho^{pn} P_n(r)^{p/r}(\mathbb{E}_{T^n\xi}|W_1-1|^r)^{p/r}.
 \end{eqnarray*}
Since $\mathbb{E}\log \mathbb{E}_\xi W_1^r<\infty$ and $\log \rho+ \mathbb{E}\log m_0(r)/r<0$, by Lemma \ref{LPBRWL2.1}, the series $\sum_n\rho^{pn} P_n(r)^{p/r}(\mathbb{E}_{T^n\xi}|W_1-1|^r)^{p/r}$ converges a.s., which leads to $\sup_n\mathbb{E}_\xi |\hat A_n|^p<\infty$ a.s..
\end{proof}


For $p>2$, we also have results for the quenched moments of $\hat A_n$.

\begin{pr}[Quenched moments of $\hat A_n$: case $p>2$]\label{CMDPP1}Assume (\ref{LPH1}). Let $p\geq2$ and $\rho\geq1$. 
\begin{itemize}
\item[(a)] If $2<p\leq t_+$, $\mathbb{E}\log\mathbb{E}_\xi W_1^p<\infty$ and $ \rho<\exp(-\frac{1}{2}\mathbb{E}\log m_0(2))$, then $\sup_n\mathbb E_\xi |\hat A_n|^p<\infty$ a.s..
\item[(b)] If  $\mathbb E|\log\mathbb E_\xi |W_1-1|^2|<\infty$, then
$\sup_n\mathbb E_\xi |\hat A_n|^p<\infty$ a.s. implies that $ \rho\leq\exp(-\frac{1}{2}\mathbb{E}\log m_0(2))$.
\end{itemize}
\end{pr}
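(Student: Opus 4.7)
The plan is to adapt the strategy of Proposition \ref{LPBRW2.1}, using Burkholder's inequality twice but now combined with Minkowski's inequality (rather than the subadditivity of $x^{p/2}$) in the intermediate step, since $p/2>1$ makes $x^{p/2}$ convex. First, applying Burkholder to the martingale $\hat A_n$ and then Minkowski in $L^{p/2}$ yields
$$\|\hat A_n\|_{L^p(\mathbb P_\xi)}^{2}\le C\sum_{k=0}^{n}\rho^{2k}\|W_{k+1}-W_k\|_{L^p(\mathbb P_\xi)}^{2}.$$
A second round of conditional Burkholder and Minkowski applied to $W_{k+1}-W_k=\sum_{u\in\mathbb T_k}\tilde X_u(W_{1,u}-1)$, together with the identity $\sum_u\tilde X_u^{2}=\tilde Z_k(2)=P_k(2)W_k(2)$ (using that $m_n(1)=1$ a.s.\ by stationarity of $\xi$), should then give
$$\mathbb E_\xi|W_{k+1}-W_k|^p\le C\,P_k(2)^{p/2}\,\bigl(\mathbb E_\xi W_k(2)^{p/2}\bigr)\,\mathbb E_{T^k\xi}|W_1-1|^p.$$

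The main obstacle, and the reason the hypothesis $p\le t_+$ enters, is controlling $\sup_k\mathbb E_\xi W_k(2)^{p/2}$ almost surely. For $p\in(2,4]$ (so $p/2\in(1,2]$) I would apply Proposition \ref{LPBRW2.1} with $\rho=1$ to the shifted system with displacements $\bar L_i=2L_i-\log m_n(2)$, whose one-step multiplier is $\bar m_0(r)=m_0(2r)/m_0(2)^r$; the required drift condition $\mathbb E\log\bar m_0(p/2)<0$ reduces to $\Lambda(p)/p<\Lambda(2)/2$. Since $\Lambda(0)>0$ by (\ref{ASS}) and $t\Lambda'(t)-\Lambda(t)<0$ on $(0,t_+)$, the map $t\mapsto\Lambda(t)/t$ is strictly decreasing on $(0,t_+)$, so this inequality holds for every $p\in(2,t_+]$. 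The moment requirement $\mathbb E\log\mathbb E_\xi W_1(2)^{p/2}<\infty$ is propagated from $\mathbb E\log\mathbb E_\xi W_1^p<\infty$ via the pointwise bound $W_1(2)\le W_1(1)^2/m_0(2)$ together with (\ref{H1}). For $p>4$ the bound on $\mathbb E_\xi W_k(2)^{p/2}$ is obtained by a dyadic induction on the exponent: applying part (a) itself to the once-shifted system (whose upper index satisfies $\bar t_+=t_+/2$) at exponent $p/2$ reduces the problem to controlling $\mathbb E_\xi W_k(4)^{p/4}$, and since each iteration halves both the exponent and $t_+$, the recursion terminates once the exponent falls into $(1,2]$.

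With $\sup_k\mathbb E_\xi W_k(2)^{p/2}<\infty$ in hand, part (a) follows from Lemma \ref{LPBRWL2.1} applied to $\alpha_k=\rho^2 m_k(2)$ and $\beta_k=(\mathbb E_\xi W_k(2)^{p/2})^{2/p}(\mathbb E_{T^k\xi}|W_1-1|^p)^{2/p}$: the drift condition $\mathbb E\log\alpha_0<0$ is exactly $\rho<\exp(-\tfrac12\mathbb E\log m_0(2))$, and $\mathbb E\log^+\beta_0<\infty$ follows from the assumed moment bound on $W_1$. For the converse (b), since $p\ge2$ the $L^p$-boundedness of $\hat A_n$ implies its $L^2$-boundedness, and orthogonality of the martingale increments together with the variance computation $\mathbb E_\xi(W_{k+1}-W_k)^2=P_k(2)\mathbb E_{T^k\xi}(W_1-1)^2$ yields the exact identity
$$\mathbb E_\xi|\hat A_n|^2=\sum_{k=0}^{n}\rho^{2k}P_k(2)\mathbb E_{T^k\xi}(W_1-1)^2.$$
This series therefore converges a.s., and under the hypothesis $\mathbb E|\log\mathbb E_\xi(W_1-1)^2|<\infty$ the converse part of Lemma \ref{LPBRWL2.1} then forces $\mathbb E\log(\rho^2 m_0(2))\le 0$, i.e.\ $\rho\le\exp(-\tfrac12\mathbb E\log m_0(2))$.
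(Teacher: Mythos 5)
Your argument is sound and closely parallels the paper's. For part (a), the paper isolates the key estimate $\sup_n\mathbb E_\xi W_n(2)^{p/2}<\infty$ into a separate Lemma (its Lemma~\ref{LPL1}), proved by dyadic induction on the integer $b$ with $p\in(2^b,2^{b+1}]$, using exactly the observation you make: $\Lambda(t)/t$ is strictly decreasing on $(0,t_+)$ because $t\Lambda'(t)-\Lambda(t)<0$ there, so $\Lambda(p)/p<\Lambda(2)/2$ and more generally $\frac1r\Lambda(rt)<\Lambda(t)$ for $1<r\le t_+/t$. Your inline recursion (Proposition~\ref{LPBRW2.1} as base case, the once--shifted system with $\bar t_+=t_+/2$ for the inductive step) is functionally the same dyadic scheme; stating it as a self--referential application of ``part~(a) itself'' is a bit awkward, since all you need is the $\rho=1$ quenched moment bound $\sup_n\mathbb E_\xi W_n(t)^{r}<\infty$, which is cleaner to phrase as a standalone lemma as the paper does, but the logic goes through. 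The propagation of the moment hypothesis via $W_1(2)\le W_1(1)^2/m_0(2)$ and the final application of Lemma~\ref{LPBRWL2.1} with $\alpha_k=\rho^2m_k(2)$ also match the paper.

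For part (b) you take a slightly different and arguably cleaner route: instead of the paper's lower Burkholder bound $\sup_n\mathbb E_\xi|\hat A_n|^p\ge C\sum_n\rho^{pn}\bigl(\mathbb E_\xi\sum_{u}\tilde X_u^2(W_{1,u}-1)^2\bigr)^{p/2}$ followed by Jensen, you pass to $L^2$ (legitimate since $p\ge2$) and use orthogonality of the martingale increments of $\hat A_n$ to get the exact identity $\mathbb E_\xi|\hat A_n|^2=\sum_{k\le n}\rho^{2k}P_k(2)\mathbb E_{T^k\xi}(W_1-1)^2$ (using $P_k(1)=1$ from (\ref{LPH1})). Both approaches then invoke the converse direction of Lemma~\ref{LPBRWL2.1} under the hypothesis $\mathbb E|\log\mathbb E_\xi(W_1-1)^2|<\infty$ and reach the same bound on $\rho$. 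Your $L^2$ orthogonality argument avoids the Burkholder constant and the Jensen step, which is a modest simplification.
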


The proof of Proposition \ref{CMDPP1} is based on the following result about the moment  of $W_n$.

\begin{lem}\label{LPL1}
Fix $t>0$. If $1<p\leq \frac{t_+}{t}$ and $\mathbb E \log \mathbb E_\xi W_1(t)^p<\infty$, then
\begin{equation}\label{LPE1}
\sup_n\mathbb E_\xi W_n(t)^p<\infty\qquad a.s..
\end{equation}
\end{lem}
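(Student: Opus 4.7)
The plan is to show $\sup_n \mathbb E_\xi W_n(t)^p < \infty$ a.s., equivalently $\sup_n \mathbb E_\xi |W_n(t) - 1|^p < \infty$ a.s., by writing $W_n(t) - 1 = \sum_{k=0}^{n-1} D_k$ with
\[D_k = W_{k+1}(t) - W_k(t) = \sum_{u \in \mathbb T_k} \tilde X_u(t)\bigl(W_{1,u}(t) - 1\bigr).\]
This is a martingale difference sequence under $\mathbb P_\xi$, and conditionally on $\mathcal F_k$ each $D_k$ is itself a weighted sum of independent mean-zero variables. Burkholder's inequality (Lemma \ref{CRL1.3.1}) will be applied at both the outer (time) and inner (particle) scales.

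For the case $1 < p \leq 2$ one has $p/2 \leq 1$, so the subadditivity $(\sum x_i)^{p/2} \leq \sum x_i^{p/2}$ converts quadratic variations into sums of $p$-th moments at each application of Burkholder. Conditional independence and the branching structure of the $\tilde X_u(t)$ then give $\mathbb E_\xi |D_k|^p \leq C_p \bigl(P_k(pt)/P_k(t)^p\bigr)\, \mathbb E_{T^k\xi}|W_1(t) - 1|^p$, and the problem reduces to the a.s.\ convergence of $\sum_k \bigl(P_k(pt)/P_k(t)^p\bigr)\, \mathbb E_{T^k\xi}|W_1(t) - 1|^p$. This follows from Lemma \ref{LPBRWL2.1} with $\alpha_k = m_k(pt)/m_k(t)^p$ and $\beta_k = \mathbb E_{T^k\xi}|W_1(t) - 1|^p$: the moment hypothesis bounds $\mathbb E\log^+\beta_0$, while $\mathbb E\log\alpha_0 = \Lambda(pt) - p\Lambda(t) < 0$ encodes the strict decrease of $s \mapsto \Lambda(s)/s$ on $(0, t_+]$, itself a consequence of convexity of $\Lambda$ together with $\Lambda(0) > 0$ from (\ref{ASS}).

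For $p > 2$, subadditivity fails, so I use Minkowski's inequality in $L^{p/2}$ at both scales. The outer step yields $\|W_n(t) - 1\|_p^2 \leq C_p \sum_k \|D_k\|_p^2$, and the inner step (using that conditionally on $\mathcal F_k$ the squares $(W_{1,u}(t) - 1)^2$ are iid with a common $L^{p/2}$ norm) gives $\mathbb E_\xi |D_k|^p \leq C_p \mathbb E_{T^k\xi}|W_1(t) - 1|^p \cdot \mathbb E_\xi\bigl(\sum_u \tilde X_u(t)^2\bigr)^{p/2}$. Since $\sum_u \tilde X_u(t)^2 = W_k(2t)\,P_k(2t)/P_k(t)^2$, the last factor equals $(P_k(2t)/P_k(t)^2)^{p/2}\,\mathbb E_\xi W_k(2t)^{p/2}$, which prompts an induction: because $pt \leq t_+$ forces $(p/2)(2t) \leq t_+$, the lemma may be applied recursively at exponent $p/2$ and parameter $2t$, halving the exponent at each step until it falls in $(1, 2]$ and the base case applies. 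Propagation of the moment hypothesis through the recursion is ensured by the pointwise bound $W_1(2t) \leq W_1(t)^2 m_0(t)^2/m_0(2t)$, a direct consequence of $\sum_i e^{2tL_i} \leq (\sum_i e^{tL_i})^2$; combined with (\ref{H1}) this transfers $\mathbb E\log\mathbb E_\xi W_1(t)^p < \infty$ into $\mathbb E\log\mathbb E_\xi W_1(2t)^{p/2} < \infty$. Summability of the outer series is again handled by Lemma \ref{LPBRWL2.1}, now with $\alpha_k = m_k(2t)/m_k(t)^2$, so the essential rate condition becomes $\Lambda(2t) < 2\Lambda(t)$.

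The main obstacle is verifying the strict inequalities $\Lambda(pt) < p\Lambda(t)$ in the base case and $\Lambda(2^{j+1}t) < 2\Lambda(2^j t)$ at each level of the recursion, since only a strict decrease of $s \mapsto \Lambda(s)/s$ allows Lemma \ref{LPBRWL2.1} to deliver summable series. The only obstruction is local linearity of $\Lambda$ with zero intercept on a subinterval of $(0, t_+]$, which is excluded by convexity of $\Lambda$ and $\Lambda(0) > 0$ on the interior; the boundary case $pt = t_+$ requires a small perturbation argument to reduce to the strict regime.
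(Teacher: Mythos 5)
Your argument is essentially the paper's own: decompose $W_n(t)-1$ into martingale increments, apply Burkholder at both the time and particle scales (the paper uses Jensen where you use Minkowski at the inner scale, but the bounds coincide), and induct on $b$ with $p\in(2^b,2^{b+1}]$ by passing from $(t,p)$ to $(2t,p/2)$ via the pointwise bound $W_1(2t)\le W_1(t)^2 m_0(t)^2/m_0(2t)$, with Lemma \ref{LPBRWL2.1} and the strict inequality $\Lambda(2t)<2\Lambda(t)$ supplying the a.s.\ summability. The strictness concern you flag at the end is handled in the paper by noting that $h_t(x)=\Lambda(tx)/x$ has strictly negative derivative throughout the open interval $(t_-/t,\,t_+/t)$ and is therefore strictly decreasing on the closed interval, so no separate perturbation argument is needed at $pt=t_+$.
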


\begin{proof}
Assume that $p\in(2^b, 2^{b+1}]$ for some integer $b\geq0$. We will prove (\ref{LPE1}) by induction on $b$. Firstly, for $b=0$, we have
$1<p\leq 2$. Recall that $\Lambda(t)=\mathbb E\log m_0(t)$. For $t>0$ fixed, set $h_t(x)=\frac{1}{x}\Lambda(tx)$, whose derivative is
$h_t'(t)=\frac{1}{x^2}(tx\Lambda'(tx)-\Lambda(tx))$. Since $h_t'(x)<0$ on $(\frac{t_-}{t}, \frac{t_+}{t})$, the function $h_t(x)$ is strictly decreasing on  $[\frac{t_-}{t}, \frac{t_+}{t}]$. Thus
$$\frac{1}{p}\mathbb E\log\frac{m_0(pt)}{m_0(t)^p}=h_t(p)-h_t(1)<0.$$
Noticing $\mathbb E \log \mathbb E_\xi W_1(t)^p<\infty$, we obtain (\ref{LPE1}) by applying Proposition \ref{LPBRW2.1} with $\rho=1$.

 Now suppose the conclusion holds for $p\in(2^{b}, 2^{b+1}]$. For  $p\in(2^{b+1}, 2^{b+2}]$, we have $p/2\in(2^b,2^{b+1}]$. Observe that
$$\mathbb E_\xi W_1(2t)^{p/2}=\mathbb E_\xi\left(\sum_{u\in\mathbb T_1}\frac{e^{2tS_u}}{m_0(2t)}\right)^{p/2}\leq \mathbb E_\xi \left[ \frac{\left(\sum_{u\in\mathbb T_1}e^{tS_u}\right)^p}{m_0(2t)^{p/2}}\right]=\mathbb E_\xi W_1(t)^p\frac{m_0(t)^p}{m_0(2t)^{p/2}}.$$
It follows that $\mathbb E\log \mathbb E_\xi W_1(t)^p<\infty$ implies $\mathbb E\log \mathbb E_\xi W_1(2t)^{p/2}<\infty$. As $1<\frac{p}{2}\leq\frac{t_+}{2t}$, by induction, we have \begin{equation}\label{Lp4}
\sup_n \mathbb E_\xi W_n(2t)^{p/2}<\infty\qquad a.s..
\end{equation}
Notice that by Burkholder's inequality and Minkowski's inequality,
$$\sup_n\mathbb E_\xi |W_n(t)-1|^p \leq C\left(\sum_{n=0}^\infty\left(\mathbb E_\xi|W_{n+1}(t)-W_n(t)|^p\right)^{2/p}\right)^{p/2}.$$
Using Burkholder's inequality and Jensen's inequality, we have
\begin{eqnarray}\label{Lp2}
\mathbb E_\xi |W_{n+1}(t)-W_n(t)|^p&\leq& C\mathbb E_\xi\left(\sum_{u\in\mathbb T_n}\tilde X_u^2(t)(W_{1,u}(t)-1)^2\right)^{p/2}\nonumber\\
&\leq&C\mathbb{E}_\xi\left(\sum_{u\in\mathbb T_n}\tilde X_u^2(t)\right)^{p/2-1}\sum_{u\in\mathbb T_n}\tilde X_u^2(t)|W_{1,u}(t)-1|^p\nonumber\\
&=&C\mathbb{E}_\xi\left(\sum_{u\in\mathbb T_n}\tilde X_u^2(t)\right)^{p/2}\mathbb E_{ T^n\xi}|W_1(t)-1|^p\nonumber\\
&=&C \frac{P_n(2t)^{p/2}}{P_n(t)^p}\mathbb E_\xi W_n(2t)^{p/2}\mathbb E_{ T^n\xi}|W_1(t)-1|^p.
\end{eqnarray}
Noticing (\ref{Lp4}), to get (\ref{LPE1}), it suffices to show the convergence of the series
\begin{equation}\label{Lp3}
\sum_n \frac{P_n(2t)}{P_n(t)^2}\left(\mathbb E_{ T^n\xi}|W_1(t)-1|^p\right)^{2/p}.
\end{equation}
Since $1<2<p\leq \frac{t_+}{t}$, we have
$$\mathbb E\log m_0(t)=h_t(1)>h_t(2)=\frac{1}{2}\mathbb E\log m_0(2t).$$
By Lemma \ref{LPBRWL2.1}, the series (\ref{Lp3})  converges a.s.. The proof is complete.
\end{proof}

\begin{proof}[Proof of Proposition \ref{CMDPP1}]
We first consider the assertion (a).
The conclusion for $\rho=1$ is contained in Lemma \ref{LPL1}. For $\rho>1$, similarly to the proof of Lemma \ref{LPL1}, applying Burkholder's inequality to $\hat A_n$ and noticing (\ref{Lp2}), the conclusion follows  by the convergence of the series
\begin{equation}\label{LPPE22}
\sum_n \rho^{2n}P_n(2)(\mathbb E_\xi W_n(2)^{p/2})^{2/p}\left(\mathbb E_{ T^n\xi}|W_1-1|^p\right)^{2/p}.
\end{equation}
Since  $\mathbb E\log \mathbb E_\xi W_1^p<\infty$ (so that $\mathbb E\log \mathbb E_\xi W_1(2)^{p/2}<\infty$) and $1<\frac{p}{2}\leq\frac{t_+}{2}$, Lemma \ref{LPL1} gives $\sup_n\mathbb E_\xi W_n(2)^{p/2}<\infty$ a.s.. By Lemma \ref{LPBRWL2.1}, the series (\ref{LPPE22}) converges a.s. if $ \rho<\exp(-\frac{1}{2}\mathbb{E}\log m_0(2))$.

We next consider the assertion (b). By Burkholder's inequality, 
\begin{eqnarray*}
\sup_n\mathbb{E}_\xi|\hat{A}_n|^p&\geq&C\sum_{n=0}^\infty \rho^{pn}\mathbb{E}_\xi|W_{n+1}-W_n|^p\\
&\geq&C\sum_{n=0}^\infty \rho^{pn}\mathbb{E}_\xi\left(\sum_{u\in\mathbb T_n}\tilde X_u^2(W_{1,u}-1)^2\right)^{p/2}\\
&\geq&C\sum_{n=0}^\infty \rho^{pn}\left(\mathbb{E}_\xi\sum_{u\in\mathbb T_n}\tilde X_u^2(W_{1,u}-1)^2\right)^{p/2}\\
&=&C\sum_{n=0}^\infty  \rho^{pn}P_n(2)^{p/2}(\mathbb{E}_{ T^n\xi}|W_1-1|^2)^{p/2}.
\end{eqnarray*}
Since $\mathbb E|\log\mathbb E_\xi |W_1-1|^2|<\infty$, we deduce  $\rho\leq\exp(-\frac{1}{2}\mathbb{E}\log m_0(2))$ by Lemma \ref{LPBRWL2.1}.
\end{proof}

Now we give the proof of Theorem \ref{LPBRW1.2}.
\begin{proof}[Proof of Theorem \ref{LPBRW1.2}]
For the assertion (a),
by Proposition \ref{LPBRW2.1}, we have $\sup_n\mathbb{E}_\xi |\hat A_n|^p<\infty$ a.s., which implies that $\rho^n(W_n-W)\rightarrow0$  in $\mathbb P_\xi$-$L^p$ for almost all $\xi$.
For the assertion (b), if $\rho<\rho_c$,  by Proposition \ref{CMDPP1}(a), we have
$\sup_n\mathbb E_\xi|\hat A_n|^p<\infty$ a.s., so that $\rho^n(W-W_n)\rightarrow0$ in $\mathbb{P}_\xi$-$L^p$ for almost all $\xi$.
If $\rho>\rho_c$ and $ \mathbb E\log^-\mathbb E_\xi |W_1-1|^2<\infty$,  we assume that $\rho^n(\mathbb{E}|W-W_n|^p)^{1/p}\rightarrow0$ a.s.. Denote $D=\{\xi: \rho^n(\mathbb{E}|W-W_n|^p)^{1/p}\rightarrow0\}$. Following similar argument in (\cite{huang14}, proof of Theorem 1.2), we can see that $\mathbb P(D)=0$ or $1$. If $\mathbb P(D)=1$, the sequence $\rho^n(\mathbb{E}_\xi|W-W_n|^p)^{1/p}$ is bounded for almost all $\xi$.  Denote this bound by $M(\xi)$. For any $1<\rho_1<\rho$,  the series $\sum_n\rho_1^n(W-W_n)$ converges in $\mathbb{P}_\xi$-$L^p$  since
$$\left(\mathbb{E}_\xi\left|\sum_n\rho_1^n(W-W_n)\right|^p\right)^{1/p}\leq \sum_n\rho_1^n(\mathbb{E}_\xi|W-W_n|^p)^{1/p}\leq M(\xi)\sum_n\left(\frac{\rho_1}{\rho}\right)^n<\infty \quad a.s..$$
Thus $A(\rho_1)$ and $\hat A(\rho_1)$ converge in $\mathbb P_\xi$-$L^p$, so that $\sup_n\mathbb E_\xi|\hat A_n(\rho_1)|^p<\infty$ a.s.. By Proposition \ref{CMDPP1}(b), we have $\rho_1\leq\rho_c$. Letting $\rho_1\uparrow\rho$ yields $\rho\leq\rho_c$. This contradicts the fact that $\rho>\rho_c$. Thus $\mathbb P(D)=0$, i.e., $\rho^n(\mathbb{E}_\xi|W-W_n|^p)^{1/p}\nrightarrow0$ for almost all $\xi$.
\end{proof}


\section{Annealed $L^p$ convergence}\label{LPBRWS3}
\subsection{Change of measure}\label{CMDPS3.1}
Inspired by the idea of the classic measure change (see for example Lyons \cite{lyons}, Biggins \& Kyprianou  \cite{biggins04}, Hu \& Shi  \cite{hu}), we introduce a new probability measure as follows.

When the environment $\xi$ is given, for $t\in\mathbb{R}$ fixed, define a new probability $\mathbb{Q}_\xi=\mathbb{Q}_\xi^{(t)}$  such that for any $n\geq1$,
\begin{equation}
\mathbb{Q}_\xi|_{\mathcal{F}_n}=W_n(t)\mathbb{P}_\xi|_{\mathcal{F}_n}.
\end{equation}
The existence of $\mathbb{Q}_\xi$ is ensured by Kolmogorov's extension theorem. In fact, under $\mathbb{Q}_\xi$, the tree $\mathbb{T}$ is a so-called size-biased weighted tree (see for example Kuhlbusch (2004, \cite{ku}) for the construction of a size-biased tree).

Fix $n\geq1$. Let $\omega_n^n=\omega_n^n(t)$ be a random variable taking values in $\mathbb{T}_n$ such that for any $u\in\mathbb T_n$,
\begin{equation}
\mathbb{Q}_\xi(\omega_n^n=u|\mathcal{F}_\infty)=\frac{\tilde X_u(t)}{W_n(t)},
\end{equation}
where $\mathcal{F}_{\infty}=\sigma(\mathcal{F}_n,n\geq0)$. Denote $\omega_k^n=\omega_n^n|k$ for $k=0,1,\cdots, n$. So $(\omega_0^n, \omega_1^n, \cdots, \omega_n^n)$ is the vertices visited by the shortest path in $\mathbb{T}$ connecting the root $\omega_0^n=\emptyset$ with $\omega_n^n$.

\begin{lem}\label{LPBRWL3.1}
Fix $t\in\mathbb{R}$ and $n\geq1$. For all nonnegative Borel functions $h$ and $g$ (defined on $\mathbb{R}$ or  $\mathbb{R}^2$), we have for each $k=1,2,\cdots, n$,
\begin{eqnarray}
&&\mathbb{E}_{\mathbb{Q}_\xi}h\left(\tilde X_{\omega_k^n}(t), \sum_{\substack{ v\in\mathbb T_k\\ v\neq\omega_k^n}}\tilde X_v(t)W_{n-k,v}(t)\right)g\left(W_{n-k,\omega_k^n}(t)\right)\nonumber\\
&=&\mathbb{E}_\xi\sum_{v\in\mathbb T_k} \tilde X_u(t)h\left(\tilde X_u(t), \sum_{\substack{v\in\mathbb T_k\\ v\neq u}}\tilde X_v(t)W_{n-k,v}(t)\right)\mathbb{E}_{T^k\xi}W_{n-k}(t)g\left(W_{n-k}(t) \right).
\label{LPBRWE3.1}
\end{eqnarray}
\end{lem}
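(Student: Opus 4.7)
The plan is a standard spine / size-biased-tree computation. The first step is to identify, for each $u \in \mathbb{T}_k$, the conditional probability $\mathbb{Q}_\xi(\omega_k^n = u \mid \mathcal{F}_\infty)$. Writing any $w \in \mathbb{T}_n$ with $w|k = u$ as $w = uv$ and using the multiplicative factorization
\[
\frac{e^{tS_w}}{P_n(t)} = \frac{e^{tS_u}}{P_k(t)} \cdot \frac{e^{t(S_w-S_u)}}{P_n(t)/P_k(t)},
\]
where the second factor is the analogous normalized weight computed inside the subtree rooted at $u$ (whose environment is $T^k\xi$), summation over $v$ gives $\sum_{w \in \mathbb{T}_n,\, w|k = u} \tilde X_w(t) = \tilde X_u(t)\, W_{n-k,u}(t)$, so that
\[
\mathbb{Q}_\xi(\omega_k^n = u \mid \mathcal{F}_\infty) = \frac{\tilde X_u(t)\, W_{n-k,u}(t)}{W_n(t)}.
\]

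Next, I would expand the LHS by conditioning on $\mathcal{F}_\infty$ so as to replace the random indicator $\mathbf{1}_{\{\omega_k^n = u\}}$ by the conditional probability computed above. Since the resulting integrand is $\mathcal{F}_n$-measurable, the defining identity $\mathbb{Q}_\xi|_{\mathcal{F}_n} = W_n(t)\,\mathbb{P}_\xi|_{\mathcal{F}_n}$ produces a factor $W_n(t)$ that cancels the denominator, and the LHS becomes
\[
\mathbb{E}_\xi \sum_{u \in \mathbb{T}_k} \tilde X_u(t)\, W_{n-k,u}(t)\, h\!\left(\tilde X_u(t),\, \sum_{v \neq u} \tilde X_v(t)\, W_{n-k,v}(t)\right) g\bigl(W_{n-k,u}(t)\bigr).
\]

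The final step is the branching property. Under $\mathbb{P}_\xi$, conditional on $\mathcal{F}_k$, the subtrees rooted at distinct vertices of $\mathbb{T}_k$ evolve independently, and the one rooted at $u$ is distributed as a BRWRE with environment $T^k\xi$. Thus, for each fixed $u$, the random variable $W_{n-k,u}(t)$ is independent of the $\sigma$-algebra generated by $\mathcal{F}_k$ together with $\{W_{n-k,v}(t) : v \in \mathbb{T}_k \setminus \{u\}\}$, with $\mathbb{P}_\xi$-law equal to the $\mathbb{P}_{T^k\xi}$-law of $W_{n-k}(t)$. Conditioning on this enlarged $\sigma$-algebra, the only factor involving $W_{n-k,u}(t)$ is $W_{n-k,u}(t)\, g(W_{n-k,u}(t))$, whose conditional expectation equals $\mathbb{E}_{T^k\xi}[W_{n-k}(t)\, g(W_{n-k}(t))]$, and this produces the RHS.

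The only delicate point is the bookkeeping of $\sigma$-algebras in this last conditioning: one must check that $\tilde X_u(t)$, the $\tilde X_v(t)$ for $v \in \mathbb{T}_k$, and the $W_{n-k,v}(t)$ for $v \neq u$ all sit in the enlarged $\sigma$-algebra while $W_{n-k,u}(t)$ remains genuinely independent of it. Once this is set up carefully the identity reduces to a single application of independence and the definition of $\mathbb{Q}_\xi$, so no further machinery is required.
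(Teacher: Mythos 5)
Your argument is correct and is essentially the paper's proof. The paper proceeds by writing the $\mathbb{Q}_\xi$-expectation as $\mathbb{E}_{\mathbb{Q}_\xi}\sum_{u\in\mathbb T_n}\mathbf 1_{\{u=\omega_n^n\}}(\cdots)$, replacing the indicator by $\mathbb{Q}_\xi(\omega_n^n=u\mid\mathcal F_\infty)=\tilde X_u(t)/W_n(t)$, cancelling $W_n(t)$ via the change of measure, and only then regrouping the sum over $\mathbb T_n$ by the ancestor $u|k$ to land on $\sum_{u\in\mathbb T_k}\tilde X_u(t)W_{n-k,u}(t)(\cdots)$; you reach the same intermediate expression slightly more directly by first marginalising to get $\mathbb{Q}_\xi(\omega_k^n=u\mid\mathcal F_\infty)=\tilde X_u(t)W_{n-k,u}(t)/W_n(t)$ for $u\in\mathbb T_k$, but these are the same computation performed in a different order, and the concluding independence/branching step is identical.
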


\begin{proof}[Proof]
By the definition of $\omega^n_k$ and $\mathbb{Q}_\xi$, we can calculate that
\begin{eqnarray*}
&&\mathbb{E}_{\mathbb{Q}_\xi}h\left(\tilde X _{\omega_k^n}(t), \sum_{\substack{ v\in \mathbb T_k \\ v\neq\omega_k^n}}\tilde X_v(t)W_{n-k,v}(t)\right)g\left(W_{n-k,\omega_k^n}(t)\right)\\
&=&\mathbb{E}_{\mathbb{Q}_\xi}\sum_{u\in\mathbb T_n}\mathbf{1}_{\{u=\omega_n^n\}}h\left(\tilde X_{u|k}(t), \sum_{\substack{v\in\mathbb T_k\\ v\neq u|k } }\tilde X_v(t)W_{n-k,v}(t)\right)g\left(W_{n-k,u|k}(t)\right)\\
&=&\mathbb{E}_{\mathbb{Q}_\xi}\sum_{u\in\mathbb T_n} \mathbb{Q}_\xi(\omega_n^n=u|\mathcal{F}_\infty)h\left(\tilde X_{u|k}(t), \sum_{\substack{ v\in\mathbb T_k\\v\neq u|k }}\tilde X_v(t)W_{n-k,v}(t)\right)g\left(W_{n-k,u|k}(t)\right)\\
 &=&\mathbb{E}_{\mathbb{Q}_\xi}\sum_{u\in\mathbb T_n} \frac{\tilde X_u(t)}{W_n(t)}h\left(\tilde X_{u|k}(t), \sum_{\substack{v\in\mathbb T_k\\ v\neq u|k }}\tilde X_v(t)W_{n-k,v}(t)\right)g\left(W_{n-k,u|k}(t)\right)\\
&=&\mathbb{E}_{\xi}\sum_{u\in\mathbb T_n}\tilde X_u(t)h\left(\tilde X_{u|k}(t), \sum_{\substack{v\in\mathbb T_k\\v\neq u|k }}\tilde X_v(t)W_{n-k,v}(t)\right)g\left(W_{n-k,u|k}(t)\right)\\
&=&\mathbb{E}_{\xi}\sum_{u\in\mathbb T_k}\tilde X_u(t)W_{n-k,u}(t)h\left(\tilde X_{u}(t), \sum_{\substack{v\in\mathbb T_k\\ v\neq u }}\tilde X_v(t)W_{n-k,v}(t)\right)g\left(W_{n-k,u}(t)\right)\\
&=&\mathbb{E}_\xi\sum_{u\in\mathbb T_k}\tilde X_u(t)h\left(\tilde X_{u}(t), \sum_{\substack{v\in\mathbb T_k\\ v\neq u}}\tilde X_v(t)W_{n-k,v}(t)\right)\mathbb{E}_{T^k\xi}W_{n-k}(t)g\left(W_{n-k}(t) \right).
\end{eqnarray*}
\end{proof}

\begin{re}\emph{In particular, taking $h=1$ or $g=1$ gives
\begin{equation}\label{LPBRWE3.2}
\mathbb{E}_{\mathbb{Q}_\xi}g\left(W_{n-k,\omega_k^n}(t)\right)=\mathbb{E}_{T^k\xi}W_{n-k}(t)g\left(W_{n-k}(t) \right),
\end{equation}
\begin{equation}\label{LPBRWE3.3}
\mathbb{E}_{\mathbb{Q}_\xi}h\left(\tilde X_{\omega_k^n}(t), \sum_{\substack{v\in\mathbb T_k\\ v\neq\omega_k^n}}\tilde X_v(t)W_{n-k,v}(t)\right)=\mathbb{E}_\xi\sum_{v\in\mathbb T_k}\tilde X_u(t)h\left(\tilde X_{u}(t), \sum_{\substack{v\in\mathbb T_k\\v\neq u}}\tilde X_v(t)W_{n-k,v}(t)\right).
\end{equation}
Combing (\ref{LPBRWE3.2}), (\ref{LPBRWE3.3}) with (\ref{LPBRWE3.1}), we have
\begin{eqnarray*}
&&\mathbb{E}_{\mathbb{Q}_\xi}h\left(\tilde X_{\omega_k^n}(t), \sum_{\substack{v\in\mathbb T_k\\ v\neq\omega_k^n}}\tilde X_v(t)W_{n-k,v}(t)\right)g\left(W_{n-k,\omega_k^n}(t)\right)\\
&=&\mathbb{E}_{\mathbb{Q}_\xi}h\left(\tilde X_{\omega_k^n}(t), \sum_{\substack{v\in\mathbb T_k\\ v\neq\omega_k^n}}\tilde X_v(t)W_{n-k,v}(t)\right)\mathbb{E}_{\mathbb{Q}_\xi}g\left(W_{n-k,\omega_k^n}(t)\right),
\end{eqnarray*}
which means that the random vector  $\left(\tilde X_{\omega_k^n}(t), \sum\limits_{\substack{v\in\mathbb T_k\\ v\neq\omega_k^n}}\tilde X_v(t)W_{n-k,v}(t)\right)$  is independent of $W_{n-k,\omega_k^n}(t)$ under $\mathbb{Q}_\xi$. Moreover, for all nonnegative Borel functions $f$ defined on $\mathbb{R}$, by taking $h(x,y)=f(x)$ or $f(y)$, we obtain
$$\mathbb{E}_{\mathbb{Q}_\xi}f\left(\tilde X _{\omega_k^n}(t)\right)=\mathbb{E}_\xi\sum_{u\in\mathbb T_k}\tilde X_u(t)f\left(\tilde X_{u}(t)\right),$$
$$\mathbb{E}_{\mathbb{Q}_\xi}f\left( \sum_{\substack{v\in\mathbb T_k\\ v\neq\omega_k^n}}\tilde X_v(t)W_{n-k,v}(t)\right)=\mathbb{E}_\xi\sum_{u\in\mathbb T_k}\tilde X_u(t)f\left(\sum_{\substack{v\in\mathbb T_k \\v\neq u}}\tilde X_v(t)W_{n-k,v}(t) \right).$$
These above assertions generalize the results of Liu (\cite{liu2000}, Lemma 4.1) on generalized multiplicative cascades.}
\end{re}


\subsection{Auxiliary  results}
In this section, We shall obtain some auxiliary results for the study of the annealed $L^p$ convergence rate of $W_n$. Let's consider the i.i.d. environment, where $(\xi_n)$ are i.i.d.
Denote
$$U_n^{(t)}(s,r)=\mathbb{E}P_n(t)^sW_n(t)^r\quad(s,t\in\mathbb{R},r>1).$$
We will show two lemmas about $U_n^{(t)}(s,r)$: the first one is a recursive inequality; the second one gives a upper estimation. Particularly, the results for $t=0$ were already shown in \cite{huang14}.
\begin{lem}\label{LPBRWL4.1}
Let $r>2$. Then
\begin{eqnarray}\label{recu}
U_n^{(t)}(s,r)^{\frac{1}{r-1}}\leq \left[\mathbb{E}m_0(t)^{s-r}m_0(tr)\right]^{\frac{1}{r-1}}U_{n-1}^{(t)}(s,r)^{\frac{1}{r-1}}+
\left[\mathbb{E}m_0(t)^sW_1(t)^r\right]^{\frac{1}{r-1}}U_{n-1}^{(t)}(s,r-1)^{\frac{1}{r-1}}.
\end{eqnarray}
\end{lem}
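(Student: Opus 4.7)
The plan is to exploit the one-step branching recursion $W_n(t) = T := \sum_i a_i V_i$ (with $a_i := e^{tL_i}/m_0(t)$, $\bar a := \sum_i a_i = W_1(t)$, $\sigma_r := \sum_i a_i^r$, and $V_i := W_{n-1,i}(t)$), and to nest Minkowski's inequality in $L^{r-1}$ (legitimate since $r > 2$) with a size-biased change of measure on one coordinate. Conditionally on $\mathcal G := \mathcal F_1 \vee \sigma(T\xi)$, the $V_i$ are i.i.d.\ with conditional mean $1$, each distributed as $W_{n-1}(t)$ under $\mathbb P_{T\xi}$. Define $\gamma := (\mathbb E V_1^r)^{1/(r-1)}$ and $\beta := (\mathbb E V_1^{r-1})^{1/(r-1)}$; then $\gamma^{r-1} = \mathbb E_{T\xi} W_{n-1}^r$ and $\beta^{r-1} = \mathbb E_{T\xi} W_{n-1}^{r-1}$. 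The core intermediate claim is the conditional bound
$$(\mathbb E_V T^r)^{1/(r-1)} \leq \gamma\,\sigma_r^{1/(r-1)} + \beta\,\bar a^{r/(r-1)}.$$

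To establish it, expand $\mathbb E_V T^r = \sum_i a_i \mathbb E_V[V_i T^{r-1}]$, fix $i$, and split $T = a_i V_i + R_i$ with $R_i := \sum_{j\neq i} a_j V_j$ independent of $V_i$. Freezing $V_i = v$, Minkowski in $L^{r-1}$ gives $(\mathbb E_R(a_i v + R_i)^{r-1})^{1/(r-1)} \leq a_i v + \|R_i\|_{r-1}$, and Minkowski applied to the sum $R_i$ yields $\|R_i\|_{r-1} \leq (\bar a - a_i)\beta$. Size-biasing $V_i$ by its own density (available because $\mathbb E V_i = 1$) converts $\mathbb E[V_i(a_i V_i + c)^{r-1}]$, with $c := (\bar a - a_i)\beta$, into $\tilde{\mathbb E}[(a_i \tilde V_i + c)^{r-1}]$; since $\|\tilde V_i\|_{L^{r-1}(\tilde{\mathbb P})} = (\mathbb E V_1^r)^{1/(r-1)} = \gamma$, one more $L^{r-1}$-Minkowski gives $\mathbb E_V[V_i T^{r-1}] \leq (a_i\gamma + (\bar a - a_i)\beta)^{r-1}$. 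Summing in $i$ and reading $\sum_i a_i y_i^{r-1}$ as the $\ell^{r-1}$-norm $\|a_i^{1/(r-1)} y_i\|_{\ell^{r-1}}^{r-1}$, a final Minkowski on sequences separates the $\gamma$- and $\beta$-contributions, and the crude estimate $(\bar a - a_i)^{r-1} \leq \bar a^{r-1}$ closes the conditional bound.

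Raising this bound to the $(r-1)$st power, multiplying by $P_n(t)^s$, integrating under $\mathbb P$, and extracting $(r-1)$st roots yields
$$U_n^{(t)}(s,r)^{1/(r-1)} \leq \bigl\|P_n^{s/(r-1)}\gamma\,\sigma_r^{1/(r-1)} + P_n^{s/(r-1)}\beta\,W_1^{r/(r-1)}\bigr\|_{L^{r-1}(\mathbb P)};$$
one further Minkowski in $L^{r-1}(\mathbb P)$ splits the right side into $(\mathbb E[P_n^s\gamma^{r-1}\sigma_r])^{1/(r-1)} + (\mathbb E[P_n^s\beta^{r-1}W_1^r])^{1/(r-1)}$. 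Plugging in $P_n(t) = m_0(t) P_{n-1}(T\xi,t)$, $\sigma_r = m_0(t)^{-r}\sum_i e^{rtL_i}$, $\mathbb E_\xi \sum_i e^{rtL_i} = m_0(rt)$, and invoking the i.i.d.\ hypothesis on $(\xi_n)$ (so that $\xi_0$ is independent of $T\xi$, and $T\xi$ has the same law as $\xi$) factors these two expectations as $\mathbb E[m_0(t)^{s-r}m_0(rt)] \cdot U_{n-1}^{(t)}(s,r)$ and $\mathbb E[m_0(t)^s W_1(t)^r] \cdot U_{n-1}^{(t)}(s,r-1)$ respectively, producing (\ref{recu}). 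The delicate point is avoiding a spurious constant: a naive $(x+y)^{r-1} \leq 2^{r-2}(x^{r-1}+y^{r-1})$ split would yield the same two-term recursion but with an extraneous $2^{r-2}$ factor, ruining the sharpness that makes $A$ and $B$ the correct constants in the downstream moment thresholds. Careful nesting of $L^{r-1}$-Minkowski inequalities together with the size-biased change of measure on a single coordinate is what produces the clean $(r-1)$st-root form.
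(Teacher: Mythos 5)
Your argument is correct, and it produces precisely the recursion \eqref{recu}. The underlying idea is the same as the paper's — pull out one factor of $W_{n-1,u}$ by a size-biased change of measure, decompose at the first generation into a distinguished particle plus the rest, and use $L^{r-1}$-Minkowski rather than a crude power-of-sum inequality to avoid an extraneous $2^{r-2}$ that would compound under iteration. The implementation, however, is genuinely different: the paper works with the global spinal probability $\mathbb{Q}_\xi$, rewrites $\mathbb{E}_\xi W_n(t)^r$ as $\mathbb{E}_{\mathbb{Q}_\xi}W_n(t)^{r-1}$, decomposes $W_n(t)$ as $M_n+Q_n$ (spine term plus siblings) and applies Minkowski in $L^{r-1}(\mathbb{Q})$, relying on Lemma~\ref{LPBRWL3.1} to compute the spine marginals; you instead expand $\mathbb{E}_V T^r=\sum_i a_i\,\mathbb{E}_V[V_i T^{r-1}]$ and size-bias the single coordinate $V_i$ separately for each $i$, never introducing $\mathbb{Q}_\xi$ or the spine. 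This makes your proof self-contained — it does not depend on Section 3.1's machinery — at the price of more Minkowski applications (four nested ones plus the $\ell^{r-1}$ one over indices, versus the paper's one Minkowski in $L^{r-1}(\mathbb{Q})$), and you use the elementary estimate $(\bar a-a_i)^{r-1}\le\bar a^{r-1}$ where the paper uses Jensen's inequality $(\sum_{v\neq u}\tilde X_v W_{n-1,v})^{r-1}\le(\sum_{v\neq u}\tilde X_v)^{r-2}\sum_{v\neq u}\tilde X_v W_{n-1,v}^{r-1}$ followed by $(\sum_{v\neq u}\tilde X_v)^{r-1}\le W_1(t)^{r-1}$. Both routes correctly invoke the i.i.d.\ hypothesis on $(\xi_n)$ (the standing assumption of this subsection) to factor the resulting expectations into a one-step constant times $U_{n-1}^{(t)}$. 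Your closing remark about the $2^{r-2}$ factor is also accurate: a naive $(x+y)^{r-1}\le2^{r-2}(x^{r-1}+y^{r-1})$ would iterate into a spurious $(2^{r-2})^n$ and destroy the exponential rate that Lemma~\ref{LPBRWL4.2} and Theorem~\ref{LPBRW1.4}(b) extract from this recursion.
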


\begin{proof}[Proof]
Fix $t\in\mathbb R$. Given $\xi$, we consider the probability $\mathbb{Q}_\xi$ defined in Section \ref{CMDPS3.1}. Notice that
$$W_n(t)=\sum_{u\in\mathbb T_n}\tilde X_u(t)=\sum_{u\in\mathbb T_1}\tilde X_u^{(t)}W_{n-1,u}(t).$$
We have
$$\mathbb{E}_\xi W_n(t)^r=\mathbb{E} _{\mathbb{Q}_\xi}W_{n-1}(t)^{r-1}=\mathbb{E} _{\mathbb{Q}_\xi}\left(\sum_{u\in\mathbb T_1}\tilde X_u(t)W_{n-1,u}(t)\right)^{r-1}.
$$
Thus
\begin{eqnarray*}
U_n^{(t)}(s,r)&=&\mathbb{E}P_n(t)^s\mathbb{E}_\xi W_n(t)^r\\
&=&\mathbb{E}P_n(t)^s\mathbb{E} _{\mathbb{Q}_\xi}\left(\sum_{u\in\mathbb T_1}\tilde X_u(t)W_{n-1,u}(t)\right)^{r-1}\\
&=&\mathbb{E} _{\mathbb{Q}}\left(P_n(t)^{\frac{s}{r-1}}\sum_{u\in\mathbb T_1}\tilde X_u(t)W_{n-1,u}(t)\right)^{r-1},
\end{eqnarray*}
where the probability  $\mathbb{Q}$ is defined as $\mathbb{Q}(B)=\mathbb{E}\mathbb{Q}_\xi(B)$, for any measurable set $B$.

Fix $n\geq1$. The set $\mathbb T_1$ can be divided into two parts: $\{\omega_1^n\}$ and the set of his brothers $\{u\in \mathbb{T}_1: u\neq \omega_1^n\}$.
We therefore   have
\begin{eqnarray*}
\sum_{u\in\mathbb T_1}\tilde X_u(t)W_{n-1,u}(t)&=&\tilde X_{\omega_1^n}(t)W_{n-1,\omega_1^n}(t)
+\sum_{\substack{u\in\mathbb T_1\\ u\neq \omega_1^n}}\tilde X_u(t)W_{n-1,u}(t)\\
&=&:M_n+Q_n.
\end{eqnarray*}
Hence
$$U_n^{(t)}(s,r)= \mathbb{E} _{\mathbb{Q}}\left[P_n(t)^{\frac{s}{r-1}}M_n+P_n(t)^{\frac{s}{r-1}}Q_n\right]^{r-1}.$$
By Minkowski's inequality,
\begin{equation}\label{m}
U_n^{(t)}(s,r)^{\frac{1}{r-1}}\leq\left[\mathbb{E} _{\mathbb{Q}}P_n(t)^sM_n^{r-1}\right]^{\frac{1}{r-1}}+
\left[\mathbb{E} _{\mathbb{Q}}P_n(t)^sQ_n^{r-1}\right]^{\frac{1}{r-1}}.
\end{equation}
By Lemma \ref{LPBRWL3.1}, we can calculate a.s.,
\begin{eqnarray*}
\mathbb{E} _{\mathbb{Q}_\xi}M_n^{r-1}&=&\mathbb{E} _{\mathbb{Q}_\xi}\left[\tilde X_{\omega_1^n}(t)W_{n-1,\omega_1^n}(t)\right]^{r-1}\\
&=&\mathbb{E}_\xi\sum_{u\in\mathbb T_1}\tilde X_u(t)^r\mathbb{E}_{T\xi}W_{n-1}(t)^{r}\\
&=&m_0(tr)m_0(t)^{-r}\mathbb{E}_{T\xi}W_{n-1}(t)^{r}.
\end{eqnarray*}
Therefore, by the independency of $(\xi_n)$,
\begin{eqnarray}\label{M}
\mathbb{E} _{\mathbb{Q}}P_n(t)^sM_n^{r-1}&=&\mathbb{E}P_n(t)^s\mathbb{E} _{\mathbb{Q}_\xi}M_n^{r-1}\nonumber\\
&=&\mathbb{E}P_n(t)^sm_0(tr)m_0(t)^{-r}\mathbb{E}_{T\xi}W_{n-1}(t)^{r}\nonumber\\
&=&\mathbb{E}m_0(t)^{s-r}m_0(tr)\mathbb{E}P_{n-1}(t)^sW_{n-1}(t)^{r}\nonumber\\
&=&\mathbb{E}m_0(t)^{s-r}m_0(tr)U_{n-1}^{(t)}(s,r).
\end{eqnarray}
Similarly, again by Lemma \ref{LPBRWL3.1}, we have a.s.
\begin{eqnarray*}
\mathbb{E} _{\mathbb{Q}_\xi}Q_n^{r-1}&=&\mathbb{E} _{\mathbb{Q}_\xi}\left(\sum_{\substack{u\in\mathbb T_1\\ u\neq\omega_1^n}}\tilde X_u(t)W_{n-1,u}(t)\right)^{r-1}\\
&=&\mathbb{E}_\xi\sum_{u\in\mathbb T_1}\tilde X_u(t)\left(\sum_{\substack{v\in\mathbb T_1\\v\neq u}}\tilde X_v(t)W_{n-1,v}(t)\right)^{r-1}\\
&\leq&\mathbb{E}_\xi\sum_{u\in\mathbb T_1}\tilde X_u(t)\left(\sum_{\substack{v\in\mathbb T_1\\v\neq u}}\tilde X_v(t)\right)^{r-2}\sum_{\substack{v\in\mathbb T_1\\v\neq u}}{\tilde X_v(t)}W_{n-1,v}(t)^{r-1}\\
&=&\mathbb{E}_\xi\sum_{u\in\mathbb T_1}\tilde X_u(t)\left(\sum_{\substack{v\in\mathbb T_1\\v\neq u}}\tilde X_v(t)\right)^{r-1}\mathbb{E}_{T\xi}
W_{n-1}(t)^{r-1}\\&\leq&\mathbb{E}_\xi\left(\sum_{u\in\mathbb T_1}\tilde X_u(t)\right)^{r}\mathbb{E}_{T\xi}W_{n-1}(t)^{r-1}\\
&=&\mathbb{E}_\xi W_1(t)^r \mathbb{E}_{T\xi}W_{n-1}(t)^{r-1}.\\
\end{eqnarray*}
Thus,
\begin{eqnarray}\label{Q}
\mathbb{E} _{\mathbb{Q}}P_n(t)^s Q_n^{r-1}&=&\mathbb{E}P_n(t)^s\mathbb{E} _{\mathbb{Q}_\xi}Q_n^{r-1}\nonumber\\
&\leq&\mathbb{E}P_n(t)^s \mathbb{E}_\xi W_1(t)^r \mathbb{E}_{T\xi}W_{n-1}(t)^{r-1}\nonumber\\
&=&\mathbb{E}m_0(t)^{s}W_1(t)^r U_{n-1}^{(t)}(s,r-1).
\end{eqnarray}
Combing (\ref{M}), (\ref{Q}) with (\ref{m}), we obtain (\ref{recu}).
\end{proof}

Following similar arguments of (\cite{huang14}, Lemma 4.4),  we obtain the following lemma which generalize (\cite{huang14}, Lemma 4.4)  to BRWRE.  The proof is omitted.

\begin{lem}\label{LPBRWL4.2}
If $\mathbb{E}m_0(t)^s<\infty$ and $\mathbb{E}m_0(t)^sW_1(t)^r<\infty$, then
\begin{itemize}
\item[] (i) for $r\in(1,2]$,
$$U_n^{(t)}(s,r)\leq C n\left[\max\left\{\mathbb{E}m_0(t)^{s-r}m_0(tr), \mathbb{E}m_0(t)^s\right\}\right]^n; $$
\item[](ii) for $r\in(b+1,b+2]$, where $b\geq1$ is an integer,
$$U_n^{(t)}(s,r)\leq C n^{br}\left[\max\left\{\left(\mathbb{E}m_0(t)^{s-r+i}m_0(t(r-i))\right)_{0\leq i\leq b, i\in \mathbb{N}}, \mathbb{E}m_0(t)^s\right\}\right]^n, $$
\end{itemize}
where $C$ is a general constant depending on $r,s$ and $t$.
\end{lem}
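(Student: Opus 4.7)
The plan is to prove part (i) directly via a martingale argument (since Lemma \ref{LPBRWL4.1} requires $r>2$), and then obtain part (ii) by induction on $b$ using that recursion.

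For part (i), I use that $(W_k(t))_k$ is a non-negative martingale with $W_0(t)=1$. Since $r/2\le 1$, sub-additivity of $x\mapsto x^{r/2}$ combined with Burkholder's inequality (Lemma \ref{CRL1.3.1}) yields
\[\mathbb{E}_\xi W_n(t)^r \le C + C\sum_{k=1}^n \mathbb{E}_\xi |W_k(t)-W_{k-1}(t)|^r.\]
Writing $W_k(t)-W_{k-1}(t)=\sum_{u\in\mathbb{T}_{k-1}}\tilde X_u(t)(W_{1,u}(t)-1)$ as a sum of conditionally independent centred variables and applying Burkholder once more (as in the proof of Proposition \ref{LPBRW2.1}) gives
\[\mathbb{E}_\xi |W_k(t)-W_{k-1}(t)|^r \le C \frac{P_{k-1}(tr)}{P_{k-1}(t)^r}\,\mathbb{E}_{T^{k-1}\xi}|W_1(t)-1|^r.\]
Multiplying by $P_n(t)^s$, taking $\mathbb{E}$, and factoring with the help of the i.i.d.\ hypothesis on $(\xi_n)$, I obtain
\[U_n^{(t)}(s,r)\le C\tilde A^n + CD\sum_{k=1}^n B^{k-1}\tilde A^{n-k},\]
where $\tilde A=\mathbb{E}m_0(t)^s$, $B=\mathbb{E}m_0(t)^{s-r}m_0(tr)$, and $D$ is a constant depending only on $r,s,t$. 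Bounding each summand by $[\max(B,\tilde A)]^{n-1}$ gives $U_n^{(t)}(s,r)\le Cn[\max(B,\tilde A)]^n$, which is (i).

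For part (ii), I apply the recursion of Lemma \ref{LPBRWL4.1}. Setting $u_n=U_n^{(t)}(s,r)^{1/(r-1)}$, $v_n=U_n^{(t)}(s,r-1)^{1/(r-1)}$, $\alpha=[\mathbb{E}m_0(t)^{s-r}m_0(tr)]^{1/(r-1)}$, and $\beta=[\mathbb{E}m_0(t)^s W_1(t)^r]^{1/(r-1)}$, the recursion reads $u_n\le\alpha u_{n-1}+\beta v_{n-1}$, so that
\[u_n\le\alpha^n + \beta\sum_{k=1}^n \alpha^{n-k} v_{k-1}.\]
The proof is by induction on $b$. The base case $b=1$ (so $r\in(2,3]$) falls within the range of part (i), which bounds $v_{k-1}$ by $(Ck)^{1/(r-1)}M_1^{(k-1)/(r-1)}$ with $M_1=\max\{\mathbb{E}m_0(t)^{s-r+1}m_0(t(r-1)),\mathbb{E}m_0(t)^s\}$. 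Letting $M_b$ denote the max in the statement and substituting into the displayed recursion, the geometric structure collapses to $u_n\le CM_b^{(n-1)/(r-1)}n^{r/(r-1)}$; raising to the power $r-1$ yields $U_n\le C'n^r M_b^{n-1}$. The inductive step is identical in structure: since $r-1\in(b,b+1]$, the inductive hypothesis applies to $v_{k-1}$ and produces an extra polynomial factor of $n^b$ in $u_n$, giving the announced $n^{br}$ bound (indeed the tighter $n^{b(r-1)}$ bound) after raising to $r-1$.

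The main obstacle is the bookkeeping of the exponential base at each step: at level $b$ a new quantity $\mathbb{E}m_0(t)^{s-r+b}m_0(t(r-b))$ first appears (it is carried forward from $M_{b-1}$ in the estimate of $v_{k-1}$), and it must be absorbed into the running maximum together with the coefficient $\alpha^{r-1}$ contributed by level $b$. Organising the induction so that $M_{b-1}\le M_b$ and $\alpha^{r-1}\le M_b$ makes the bookkeeping transparent; once this is set up, the bound follows from a routine geometric-sum estimate, exactly along the lines of \cite[Lemma 4.4]{huang14} adapted to the BRWRE setting.
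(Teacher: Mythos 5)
Your proof is correct and reconstructs essentially the argument the paper points to (it omits the proof and refers to Huang \& Liu's Lemma 4.4, whose structure is precisely the one you use: a direct Burkholder estimate with the i.i.d.\ factorization of $\mathbb{E}[P_n(t)^s\,\cdot\,]$ for the base range $r\in(1,2]$, then induction on $b$ via the recursion of Lemma~\ref{LPBRWL4.1}, absorbing $\alpha^{r-1}$ and the previous-level maximum into the new one). Your observation that the polynomial factor can be sharpened to $n^{b(r-1)}$ is consistent with the computation and is indeed a slightly tighter bound than the stated $n^{br}$.
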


\begin{lem}\label{LPBRWL4.3}The function
$f(x):=\mathbb{E}m_0(t)^xm_0(\alpha+\beta x)$ ($t, \alpha$ and $\beta \in\mathbb{R}$ are fixed) is $\log$ convex.
\end{lem}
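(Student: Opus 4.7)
The plan is to establish log-convexity of $f$ directly from the definition, namely to show that for any $x_1,x_2\in\mathbb{R}$ and $\lambda\in[0,1]$,
\[
f(\lambda x_1+(1-\lambda)x_2)\leq f(x_1)^{\lambda}f(x_2)^{1-\lambda}.
\]
The argument splits into two ingredients: (i) the inner function $s\mapsto m_0(s)$ is itself log-convex, and (ii) Hölder's inequality lets us convert a pointwise multiplicative bound into an $\mathbb{E}$-level multiplicative bound.

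For step (i), I would observe that for each fixed environment $\xi$, $m_0(s)=\mathbb{E}_\xi\sum_{i=1}^{N}e^{sL_i}$ is the Laplace transform of the positive (random) mean intensity measure $\nu_\xi(\cdot)=\mathbb{E}_\xi X^{(\emptyset)}(\cdot)$, so a direct application of Hölder's inequality to $e^{(\lambda s_1+(1-\lambda)s_2)x}=(e^{s_1 x})^{\lambda}(e^{s_2 x})^{1-\lambda}$ under $\nu_\xi$ yields
\[
m_0(\lambda s_1+(1-\lambda)s_2)\leq m_0(s_1)^{\lambda}m_0(s_2)^{1-\lambda}.
\]
Applying this with $s_j=\alpha+\beta x_j$ and combining with the elementary identity $m_0(t)^{\lambda x_1+(1-\lambda)x_2}=(m_0(t)^{x_1})^{\lambda}(m_0(t)^{x_2})^{1-\lambda}$, one obtains the pointwise (a.s.) estimate
\[
m_0(t)^{\lambda x_1+(1-\lambda)x_2}\,m_0(\alpha+\beta(\lambda x_1+(1-\lambda)x_2))\leq\bigl(m_0(t)^{x_1}m_0(\alpha+\beta x_1)\bigr)^{\lambda}\bigl(m_0(t)^{x_2}m_0(\alpha+\beta x_2)\bigr)^{1-\lambda}.
\]

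For step (ii), I would take $\mathbb{E}$ on both sides and apply Hölder's inequality with conjugate exponents $1/\lambda$ and $1/(1-\lambda)$ to the right hand side; the $\lambda$-th and $(1-\lambda)$-th powers collapse neatly to $f(x_1)^{\lambda}f(x_2)^{1-\lambda}$, delivering exactly the desired log-convexity inequality for $f$. There is no serious obstacle here: the only thing to be a bit careful about is the case where $f(x_1)$ or $f(x_2)$ is infinite, in which the inequality is vacuous, or where some terms might vanish (handled by the usual conventions $0^{\lambda}=0$), so the argument is a short and routine two-line Hölder computation once (i) is in place.
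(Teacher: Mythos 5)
Your proof is correct and follows essentially the same route as the paper: both establish log-convexity of $m_0$ via Hölder under the intensity measure, combine with the trivial log-linearity of $x\mapsto m_0(t)^x$, and then apply Hölder once more to pass the inequality through $\mathbb{E}$.
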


\begin{proof}[Proof]
For $\lambda\in(0,1)$, $\forall x_1,x_2$, using H\"older's inequality, we have
\begin{eqnarray*}
m_0(\alpha+\beta(\lambda x_1+(1-\lambda)x_2))\leq m_0(\alpha+\beta x_1)^\lambda m_0(\alpha+\beta x_2)^{1-\lambda},
\end{eqnarray*}
which means that $m_0(\alpha+\beta x)$ is log convex.
Noticing the inequality above and using H\"older's inequality again,  we get
\begin{eqnarray*}
 f(\lambda x_1+(1-\lambda )x_2)&=& \mathbb{E}m_0(t)^{\lambda x_1+(1-\lambda)x_2}m_0(\alpha+\beta(\lambda x_1+(1-\lambda)x_2))\\
&\leq& \mathbb{E}\left[m_0(t)^{x_1}m_0(\alpha+\beta x_1)\right]^{\lambda}\left[m_0(t)^{x_2}m_0(\alpha+\beta x_2)\right]^{1-\lambda}\\
&\leq& \left[\mathbb{E}m_0(t)^{x_1}m_0(\alpha+\beta x_1)\right]^{\lambda}\left[\mathbb{E}m_0(t)^{x_2}m_0(\alpha+\beta x_2)\right]^{1-\lambda}\\
&=& f(x_1)^\lambda f(x_2)^{1-\lambda},
\end{eqnarray*}
which confirms the log-convexity of $f$.
\end{proof}

\subsection{Proofs of Theorems \ref{LPBRW1.3} and \ref{LPBRW1.4}}
Recall the martingale $\hat A_n$ introduced in Section \ref{LPBRWS2}. Similar to the quenched case, we need to study the $p$-th ($p>1$) moment of $\hat A_n$ under annealed law $\mathbb P$. We distinguish two case: $1<p<2$ and $p\geq2$.

\begin{pr}[Annealed moments of $\hat A_n$: case $1<p<2$]\label{LPBRWT5.1}
Assume (\ref{LPH1}) and that $(\xi_n)$ are i.i.d..
Let $1<p<2$ and $\rho\geq1$. If
$ \mathbb{E}[\mathbb{E}_\xi W_1^r]^{p/r}<\infty$ and $\rho\left[\mathbb{E}m_0(r)^{p/r}\right]^{1/p}<1$
for some $r\in[p,2]$, then  $\sup_n\mathbb E|\hat A_n|^p<\infty$.
\end{pr}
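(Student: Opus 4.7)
The plan is to mimic the quenched argument of Proposition \ref{LPBRW2.1} and then integrate over the environment, exploiting the i.i.d.\ structure of $(\xi_n)$ to factorize the annealed expectation into a geometric series.

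First, I would apply Burkholder's inequality (Lemma \ref{CRL1.3.1}) to the martingale $\hat A_n = \sum_{k=0}^n \rho^k(W_{k+1}-W_k)$ with respect to $\mathcal{F}_{k+1}$ under $\mathbb{P}$. Since $p<2$, the subadditivity $(\sum a_k)^{p/2}\le \sum a_k^{p/2}$ reduces matters to controlling $\mathbb{E}|W_{k+1}-W_k|^p$, giving
\[
\mathbb{E}|\hat A_n|^p \le C\,\mathbb{E}\Bigl(\sum_{k=0}^n \rho^{2k}(W_{k+1}-W_k)^2\Bigr)^{p/2} \le C\sum_{k=0}^\infty \rho^{pk}\,\mathbb{E}|W_{k+1}-W_k|^p.
\]

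Next I would recycle the quenched estimate already obtained in the proof of Proposition \ref{LPBRW2.1}: namely, applying conditional Burkholder and the concavity of $x^{p/r}$ yields
\[
\mathbb{E}_\xi|W_{k+1}-W_k|^p \le C\,P_k(r)^{p/r}\,\bigl(\mathbb{E}_{T^k\xi}|W_1-1|^r\bigr)^{p/r}.
\]
Now comes the key step that is new to the annealed setting. Because $(\xi_n)$ is i.i.d., $P_k(r)=\prod_{i=0}^{k-1}m_i(r)$ depends only on $(\xi_0,\dots,\xi_{k-1})$, while $\mathbb{E}_{T^k\xi}|W_1-1|^r$ depends only on $\xi_k$; hence the two factors are independent and
\[
\mathbb{E}|W_{k+1}-W_k|^p \le C\,\bigl[\mathbb{E}\,m_0(r)^{p/r}\bigr]^{k}\,\mathbb{E}\bigl(\mathbb{E}_\xi|W_1-1|^r\bigr)^{p/r}.
\]
The remaining annealed moment is finite since $|W_1-1|^r\le C(W_1^r+1)$, so $\mathbb{E}(\mathbb{E}_\xi|W_1-1|^r)^{p/r}\le C\,\mathbb{E}(\mathbb{E}_\xi W_1^r)^{p/r}+C<\infty$ by hypothesis.

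Plugging this back gives $\mathbb{E}|\hat A_n|^p \le C\sum_{k=0}^\infty \bigl(\rho^p\,\mathbb{E}\,m_0(r)^{p/r}\bigr)^k$, and the assumption $\rho<[\mathbb{E}\,m_0(r)^{p/r}]^{-1/p}$ is exactly what makes the common ratio less than $1$, so the geometric series converges uniformly in $n$. I do not anticipate any major obstacle: the only delicate point is the factorization in the annealed expectation, which requires the independence (not just stationarity) of the $\xi_n$'s, explaining why the i.i.d.\ hypothesis is needed here whereas stationarity sufficed in the quenched Proposition \ref{LPBRW2.1} (via Lemma \ref{LPBRWL2.1}).
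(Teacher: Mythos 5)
Your argument is correct and follows essentially the same route as the paper: apply Burkholder to $\hat A_n$ under $\mathbb P$, reduce to $\sum_k \rho^{pk}\mathbb{E}|W_{k+1}-W_k|^p$ by subadditivity of $x^{p/2}$, bound each summand via the quenched estimate (\ref{que1}), and factorize the annealed expectation using independence of $P_k(r)$ (a function of $\xi_0,\dots,\xi_{k-1}$) and $\mathbb{E}_{T^k\xi}|W_1-1|^r$ (a function of $\xi_k$), arriving at the same geometric series $\sum_k(\rho^p\,\mathbb{E}m_0(r)^{p/r})^k$. Your explicit justification of the factorization step and the remark about why i.i.d.\ (rather than mere stationarity) is needed here are a useful clarification that the paper leaves implicit.
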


\begin{proof}[Proof]
 Similar to the proof of Theorem \ref{LPBRW2.1}, appling Burkholder's inequality to $\hat A_n$ under annealed law $\mathbb P$ and noticing (\ref{que1}), we have
\begin{eqnarray*}
\sup_n\mathbb{E}|\hat{A}_n|^p&\leq& C \sum_{n=0}^{\infty}\rho^{pn}\mathbb{E}\left[P_n(r)\mathbb{E}_{T^n\xi}|W_1-1|^r\right]^{p/r}\\
&=&C\mathbb{E}(\mathbb{E}_\xi|W_1-1|^r)^{p/r}\sum_{n=0}^{\infty}\rho^{pn}[\mathbb{E}m_0(r)^{p/r}]^n.
\end{eqnarray*}
Thus $\sup_n\mathbb{E} | \hat{A}_n|^p<\infty $ if $\mathbb{E}[\mathbb{E}_\xi W_1^r]^{p/r}<\infty$ and $\rho(\mathbb{E}m_0(r)^{p/r})^{1/p}<1$.
\end{proof}

Now we consider the case where $p\geq2$. The proposition below gives a sufficient and necessary condition for the  existence of uniform $p$-th moment of $\hat A_n$ under annealed law $\mathbb{P}$.

\begin{pr}[Annealed moments of $\hat A_n$: case $p\geq2$]\label{LPBRWT5.2}
Assume  (\ref{LPH1}), (\ref{LPH2}) and that $(\xi_n)$ are i.i.d..
Let $p\geq2$ and $\rho\geq1$. Then  $\sup_n\mathbb E|\hat A_n|^p<\infty$ if and only if
 $\mathbb{E}W_1^p<\infty$ and $\rho\max\{[\mathbb{E}m_0(p)]^{1/p},[\mathbb{E}m_0(2)^{p/2}]^{1/p}\}<1$.
\end{pr}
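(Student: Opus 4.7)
The proposition is an ``iff'' statement with two ingredients ($\mathbb{E}W_1^p<\infty$ and the $\rho$ bound), so my plan treats the two directions separately.

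For sufficiency, since $\hat A_n$ is a martingale with differences $\rho^k(W_{k+1}-W_k)$, Burkholder's inequality (Lemma \ref{CRL1.3.1}) followed by Minkowski in $L^{p/2}$ (valid because $p/2\geq 1$) gives
$$(\mathbb{E}|\hat A_n|^p)^{2/p}\leq C\sum_{k=0}^{n}\rho^{2k}\bigl(\mathbb{E}|W_{k+1}-W_k|^p\bigr)^{2/p}.$$
To bound $\mathbb{E}|W_{k+1}-W_k|^p$, I recycle the calculation (\ref{Lp2}) from the proof of Lemma \ref{LPL1}: with $P_n(1)=1$ thanks to $m_0(1)=1$, it reads $\mathbb{E}_\xi|W_{k+1}-W_k|^p\leq CP_k(2)^{p/2}\mathbb{E}_\xi W_k(2)^{p/2}\mathbb{E}_{T^k\xi}|W_1-1|^p$; taking $\mathbb{E}$ and using the i.i.d.\ hypothesis on $(\xi_n)$ yields $\mathbb{E}|W_{k+1}-W_k|^p\leq C U_k^{(2)}(p/2,p/2)\,\mathbb{E}|W_1-1|^p$, where $\mathbb{E}W_1^p<\infty$ ensures both $\mathbb{E}|W_1-1|^p<\infty$ and the integrability $\mathbb{E}[m_0(2)^{p/2}W_1(2)^{p/2}]\leq\mathbb{E}W_1^p<\infty$ needed to invoke Lemma \ref{LPBRWL4.2}. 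That lemma with $t=2$, $s=r=p/2$ bounds $U_k^{(2)}(p/2,p/2)$ by a polynomial-times-geometric in $k$, whose geometric base is the maximum of $\mathbb{E}[m_0(2)^i m_0(p-2i)]$ for $0\leq i\leq b$ and of $\mathbb{E}m_0(2)^{p/2}$; the log-convexity of $f(x):=\mathbb{E}[m_0(2)^x m_0(p-2x)]$ (Lemma \ref{LPBRWL4.3}), with $f(0)=\mathbb{E}m_0(p)$ and $f(p/2)=\mathbb{E}m_0(2)^{p/2}$, collapses this maximum to $\max\{\mathbb{E}m_0(p),\mathbb{E}m_0(2)^{p/2}\}$. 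The hypothesis $\rho\max\{[\mathbb{E}m_0(p)]^{1/p},[\mathbb{E}m_0(2)^{p/2}]^{1/p}\}<1$ then makes the resulting series converge.

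For necessity, taking $n=0$, $\hat A_0=W_1-1$, so $\mathbb{E}|W_1-1|^p<\infty$ and hence $\mathbb{E}W_1^p<\infty$. For the $\rho$ bound, I apply the lower half of Burkholder's inequality and the pointwise superadditivity $(\sum_k x_k)^{p/2}\geq\sum_k x_k^{p/2}$ (valid for $p/2\geq 1$ and $x_k\geq 0$) to get
$$\sup_n\mathbb{E}|\hat A_n|^p\geq c\sum_{k=0}^{\infty}\rho^{pk}\mathbb{E}|W_{k+1}-W_k|^p.$$
Two lower bounds on $\mathbb{E}|W_{k+1}-W_k|^p$ finish the argument. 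The first uses conditional Burkholder applied to $W_{k+1}-W_k=\sum_{u\in\mathbb{T}_k}\tilde X_u(W_{1,u}-1)$ (a conditional sum of independent mean-zero terms given $\mathcal{F}_k$), followed by conditional Jensen on the convex $x^{p/2}$, yielding $\mathbb{E}_\xi|W_{k+1}-W_k|^p\geq c\bigl[P_k(2)\mathbb{E}_{T^k\xi}|W_1-1|^2\bigr]^{p/2}$, so that after integration $\mathbb{E}|W_{k+1}-W_k|^p\geq c'[\mathbb{E}m_0(2)^{p/2}]^k$ (the positive prefactor $\mathbb{E}(\mathbb{E}_\xi|W_1-1|^2)^{p/2}>0$ uses (\ref{LPH2})). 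The second is a conditional Rosenthal-type inequality for independent mean-zero sums, giving $\mathbb{E}_\xi|W_{k+1}-W_k|^p\geq c\,\mathbb{E}_\xi\sum_u\tilde X_u^p\cdot\mathbb{E}_{T^k\xi}|W_1-1|^p=cP_k(p)\mathbb{E}_{T^k\xi}|W_1-1|^p$, hence $\mathbb{E}|W_{k+1}-W_k|^p\geq c''[\mathbb{E}m_0(p)]^k$. Summability of the two resulting geometric series forces $\rho^p\mathbb{E}m_0(2)^{p/2}<1$ and $\rho^p\mathbb{E}m_0(p)<1$ simultaneously, which is exactly the desired condition.

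The main obstacle is the Rosenthal-type lower bound used in the second step of necessity, since the paper invokes only Burkholder's inequality explicitly; this auxiliary tool must be imported from Rosenthal's classical inequality (or verified directly for the mean-zero conditionally independent sum at hand). The sufficiency side is otherwise routine given Lemmas \ref{LPBRWL4.1}--\ref{LPBRWL4.3}, the only subtle step being the invocation of log-convexity to collapse the multi-term maximum produced by Lemma \ref{LPBRWL4.2}.
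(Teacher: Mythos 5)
Your proof is correct and follows essentially the same route as the paper: sufficiency via Burkholder, Minkowski, the quenched bound (\ref{Lp2}), Lemma \ref{LPBRWL4.2}, and the log-convexity collapse (Lemma \ref{LPBRWL4.3}); necessity via lower Burkholder applied to $\hat A_n$ and to the innovations $W_{k+1}-W_k$.

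The one point worth flagging back to you: the ``Rosenthal-type'' inequality you list as the main obstacle is not actually needed. You already invoked the pointwise superadditivity $(\sum_u x_u)^{p/2}\geq\sum_u x_u^{p/2}$ for the first step; applying it to $x_u=\tilde X_u^2(W_{1,u}-1)^2$ gives $\mathbb{E}_\xi\bigl(\sum_u\tilde X_u^2(W_{1,u}-1)^2\bigr)^{p/2}\geq\mathbb{E}_\xi\sum_u\tilde X_u^p|W_{1,u}-1|^p=P_k(p)\,\mathbb{E}_{T^k\xi}|W_1-1|^p$, which is exactly your second lower bound with no new tool. The paper in fact obtains both of your lower bounds at once from a single parametrized family: for any $r\in[2,p]$, the $\ell^2\hookrightarrow\ell^r$ inclusion gives $\bigl(\sum_u\tilde X_u^2(W_{1,u}-1)^2\bigr)^{p/2}\geq\bigl(\sum_u\tilde X_u^r|W_{1,u}-1|^r\bigr)^{p/r}$, and Jensen in the convex power $p/r\geq1$ yields $\mathbb{E}\bigl(\sum\tilde X_u^2(W_{1,u}-1)^2\bigr)^{p/2}\geq\mathbb{E}\bigl(\mathbb{E}_\xi\sum\tilde X_u^r|W_{1,u}-1|^r\bigr)^{p/r}=\mathbb{E}(\mathbb{E}_\xi|W_1-1|^r)^{p/r}[\mathbb{E}m_0(r)^{p/r}]^n$; taking $r=2$ and $r=p$ recovers your two cases. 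Also a small slip: the relevant endpoint of the log-convex $f(x)=\mathbb{E}[m_0(2)^xm_0(p-2x)]$ is $f(p/2-1)=\mathbb{E}m_0(2)^{p/2}$, not $f(p/2)$ (the latter equals $\mathbb{E}m_0(2)^{p/2}m_0(0)$); the paper's supremum is over $x\in[0,p/2-1]$.
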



\begin{proof}
(i) The necessity. Since $\sup_n\mathbb{E}| \hat{A}_n|^p<\infty $, we have $\mathbb{E}W_1^p<\infty$. Furthermore, by Burkholder's inequality, we can calculate for all $r\in[2, p]$,
\begin{eqnarray*}
\sup_n\mathbb{E}|\hat{A}_n|^p&\geq&
C\sum_{n=0}^\infty \rho^{pn}\mathbb{E}\left(\sum_{u\in\mathbb T_n}\tilde X_u^2(W_{1,u}-1)^2\right)^{p/2}\\
&\geq&C\sum_{n=0}^\infty \rho^{pn}\mathbb{E}\left(\mathbb{E}_\xi\sum_{u\in\mathbb T_n}\tilde X_u^r|W_{1,u}-1|^r\right)^{p/r}\\
&=&C\mathbb{E}(\mathbb{E}_\xi|W_1-1|^r)^{p/r}\sum_{n=0}^\infty  \rho^{pn}[\mathbb{E}m_0(r)^{p/r}]^n.
\end{eqnarray*}
Therefore the series $\sum_n  \rho^{pn}[\mathbb{E}m_0(r)^{p/r}]^n<\infty$ for all $r\in[2, p]$, which implies that $\rho[\mathbb{E}m_0(r)^{p/r}]^{1/p}<1$ for all
$r\in[2, p]$. Taking $r=2,p$, we get $\rho\max\{[\mathbb{E}m_0(p)]^{1/p},[\mathbb{E}m_0(2)^{p/2}]^{1/p}\}<1$.

(ii) The sufficiency. By Burkholder's inequality and Minkowski's inequality,
\begin{eqnarray*}
\sup_n\mathbb{E}|\hat{A}_n|^p&\leq&C\mathbb{E}\left(\sum_{n=0}^{\infty}\rho^{2n}(W_{n+1}-W_n)^2\right)^{p/2}\\
&\leq&C\left(\sum_{n=0}^{\infty}\rho^{2n}(\mathbb{E}|W_{n+1}-W_n|^p)^{2/p}\right)^{p/2}.
\end{eqnarray*}
By (\ref{Lp2}),
\begin{eqnarray}\label{L4.1}
\mathbb{E}|W_{n+1}-W_n|^p&\leq&
C\mathbb{E}|W_1-1|^p\mathbb{E}P_n(2)^{p/2}W_n(2)^{p/2}
=C\mathbb{E}|W_1-1|^pU_n^{(2)}(p/2, p/2).
\end{eqnarray}
Since $\mathbb{E}m_0(2)^{p/2}<\infty$, and
$$\mathbb{E}m_0(2)^{p/2}W_1(2)^{p/2}=\mathbb{E}\left(\sum_{u\in\mathbb T_1}\tilde X_u^2\right)^{p/2}\leq \mathbb{E}\left(\sum_{u\in \mathbb T_1}\tilde X_u\right)^{p}=\mathbb{E}W_1^p<\infty, $$
by Lemma \ref{LPBRWL4.2},
\begin{equation}\label{L4.2}
U_n^{(2)}(p/2, p/2)\leq Cn^\gamma\left[\max\left\{\left(\mathbb{E}m_0(2)^im_0(p-2i)\right)_{0\leq i\leq b, i\in\mathbb{N}}, \mathbb{E}m_0(2)^{p/2}\right\}\right]^n
\end{equation}
for $p/2\in(b+1,b+2]$ ($b\geq 0$ is an integer), where $\gamma=1$ for $b=0$ and $\gamma=bp/2$ for $b\geq 1$. Observing that
$\mathbb{E}m_0(2)^xm_0(p-2x)$ is $\log$ convex (see  Lemma \ref{LPBRWL4.3}), we have
\begin{eqnarray}\label{L4.3}
&&\max\left\{\left(\mathbb{E}m_0(2)^im_0(p-2i)\right)_{ 0\leq i\leq b,i\in\mathbb{N}},\mathbb{E}m_0(2)^{p/2}\right\}\nonumber\\
&\leq&\sup_{0\leq x\leq p/2-1}\{\mathbb{E}m_0(2)^xm_0(p-2x)\}
=\max\{\mathbb{E}m_0(p),\mathbb{E}m_0(2)^{p/2}\}.
\end{eqnarray}
Thus, by ( \ref{L4.1}), (\ref{L4.2}) and (\ref{L4.3}), we have
$$\mathbb{E}|W_{n+1}-W_n|^p\leq Cn^\gamma\left[\max\left\{\mathbb{E}m_0(p),\mathbb{E}m_0(2)^{p/2}\right\}\right]^n.$$
Hence we obtain
\begin{eqnarray*}
\sum_n\rho^{2n}(\mathbb{E}|W_{n+1}-W_n|^p)^{2/p}\leq C\sum_n\rho^{2n}n^{2\gamma/p}\left[\max\left\{[\mathbb{E}m_0(p)]^{2/p},[\mathbb{E}m_0(2)^{p/2}]^{2/p}\right\}\right]^n.
\end{eqnarray*}
The right side is finite if and only if $\rho\max\{[\mathbb{E}m_0(p)]^{1/p},[\mathbb{E}m_0(2)^{p/2}]^{1/p}\}<1$.
\end{proof}

\bigskip

Now we prove Theorems \ref{LPBRW1.3} and \ref{LPBRW1.4}, using the moment results of $\hat A_n$.

\begin{proof}[Proof of Theorem \ref{LPBRW1.3}]
(i) The sufficiency. For $1<p<2$, applying Proposition \ref{LPBRWT5.1} with $\rho=1$, we obtain $\sup_n\mathbb E W_n^p<\infty$, which is equivalent to $W_n\rightarrow W$ in $\mathbb P$-$L^p$. For $p\geq2$, by the log convexity of $m_0(x)$ and Jensen's inequality, one has
$$\mathbb E m_0(2)^{p/2}\leq \mathbb E m_0(p)^{p/2(p-1)}\leq\left[ \mathbb E m_0(p)\right]^{p/2(p-1)}.$$
So the condition  $\mathbb E m_0(p)<1$ ensures $\max\{[\mathbb{E}m_0(p)]^{1/p},[\mathbb{E}m_0(2)^{p/2}]^{1/p}\}<1$.
Applying Proposition \ref{LPBRWT5.2} with $\rho=1$ yields the results.

(ii) The necessity. Notice that $\sup_n\mathbb{E}W_n^p<\infty$ ensures that $\mathbb{E}W_1^p<\infty$ and $0<\mathbb{E}W^p<\infty$.
One can see that $W$ satisfies the equation
$$W=\sum_{u\in\mathbb T_1}\tilde X_uW_{(u)}\qquad a.s.,$$
where $W_{(u)}$ denotes the limit random variable of the martingale $W_{n,u}$, and the distribution of $W_{(u)}$ is
$\mathbb{P}_\xi(W_{(u)}\in\cdot)=\mathbb{P}_{T^{|u|}\xi}(W\in\cdot)$. We have
$$W^p=\left(\sum_{u\in\mathbb T_1}\tilde X_uW_{(u)}\right)^p\geq\sum_{u\in\mathbb T_1}\tilde X_u^pW_{(u)}^p\quad a.s.,$$
and the strict inequality holds with positive probability. Thus
$$\mathbb{E}W^p>\mathbb{E}\sum_{u\in\mathbb T_1}\tilde X_u^pW_{(u)}^p=\mathbb{E}m_0(p)\mathbb{E}W^p,$$
which implies that $\mathbb{E}m_0(p)<1$.

\end{proof}

\begin{proof}[Proof of Theorem \ref{LPBRW1.4}]
The assertion (a) is  consequently from  Proposition \ref{LPBRWT5.1} with $\rho>1$ and
 the assertion (b) is from  Proposition \ref{LPBRWT5.2}  with $\rho>1$.
\end{proof}

\section{Uniform convergence; Proof of Theorem \ref{LPBRWU1}}\label{LPBRWS4}

In this section, we study the uniform convergence of the martingale $W_n(t)$, regarding $W_n(t)$ as the function of $t$ (so $t$ is not fixed). Here we just consider the quenched uniform convergence and give the proof of Theorem \ref{LPBRWU1}. The annealed uniform convergence can be obtain almost in the same way, so  we omit the proof of Theorem \ref{LPBRWU2}. The basic tool is still the inequalities for martingale. But in contrast  to the convergence for $t$ fixed, we should consider the superior on a interval of $t$ while estimating the moment of $W_n(t)$.  We first provide two related lemmas.

\begin{lem}\label{UL1}
Let $D=[t_1,t_2]\subset I$. If $D\subset [0,\infty)$ or $D\subset(-\infty, 0]$, then there exists a constant $p_D>1$ such that for any $p\in(1,p_D]$,
$$\mathbb E\log \sup_{t\in D}\left[\frac{m_0(pt)}{m_0(t)^p}\right]<0.$$

\end{lem}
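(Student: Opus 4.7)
The plan is to reduce $\sup_{t\in D}\log\frac{m_0(pt)}{m_0(t)^p}$ to its value at a single endpoint of $D$ using the log-convexity of the Laplace transform in $t$; once this reduction is made, the problem becomes a one-dimensional calculation based on the strict inequality $t\Lambda'(t)-\Lambda(t)<0$ on $I$.

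First I would observe that, for each realization of $\xi$, one has $m_0(t)=\int e^{tx}\mu_\xi(dx)$, where $\mu_\xi(\cdot)=\mathbb{E}_\xi X_0(\cdot)$ is a positive Radon measure on $\mathbb{R}$. By H\"older's inequality the Laplace transform of any positive measure is log-convex in $t$, so $(\log m_0)'(t)=m_0'(t)/m_0(t)$ is non-decreasing on $\mathbb{R}$. Setting
$$g_p(t):=\log\frac{m_0(pt)}{m_0(t)^p}=\log m_0(pt)-p\log m_0(t),$$
we have $g_p'(t)=p\bigl[(\log m_0)'(pt)-(\log m_0)'(t)\bigr]$, which is $\geq 0$ on $[0,\infty)$ and $\leq 0$ on $(-\infty,0]$ for any $p\geq 1$. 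Hence if $D=[t_1,t_2]\subset[0,\infty)$ then $\sup_{t\in D}g_p(t)=g_p(t_2)$ a.s., while if $D\subset(-\infty,0]$ then $\sup_{t\in D}g_p(t)=g_p(t_1)$ a.s.; denote by $t^*$ the relevant endpoint in each case.

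Next, since $\log\sup=\sup\log$ for positive quantities and (\ref{H1}) ensures $\mathbb{E}|\log m_0(s)|<\infty$ for every $s\in\mathbb{R}$, taking expectation yields
$$f(p):=\mathbb{E}\log\sup_{t\in D}\frac{m_0(pt)}{m_0(t)^p}=\Lambda(pt^*)-p\Lambda(t^*).$$
Clearly $f(1)=0$, and by (\ref{H1}) the function $\Lambda$ is $C^1$ on $\mathbb{R}$, so $f'(1)=t^*\Lambda'(t^*)-\Lambda(t^*)$. Since $t^*\in D\subset I$, the strict inequality $t\Lambda'(t)-\Lambda(t)<0$ on $I$—the same strict form already used in the proof of Lemma \ref{LPL1} via the strict decrease of $h_t(x)=\Lambda(tx)/x$ on $I/t$—gives $f'(1)<0$. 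Continuity of $f$ at $p=1$ then furnishes $p_D>1$ with $f(p)<0$ for all $p\in(1,p_D]$, which is the desired conclusion.

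The key step is the monotonicity observation $g_p'(t)\geq 0$ on $[0,\infty)$ (and $\leq 0$ on $(-\infty,0]$). Without this reduction, a direct attempt to compare $\mathbb{E}\sup_{t\in D}$ with $\sup_{t\in D}\mathbb{E}$ would be blocked by the fact that the former can be strictly larger than the latter. The quenched log-convexity of $m_0$, coming from H\"older applied to the intensity measure $\mu_\xi$, is exactly what forces the supremum to be attained at an endpoint uniformly in $\xi$, after which the endpoint analysis via the definition of $I$ closes the argument.
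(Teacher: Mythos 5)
Your proof is correct and follows essentially the same route as the paper: both arguments use the quenched log-convexity of $m_0$ to show that $t\mapsto\log m_0(pt)-p\log m_0(t)$ is monotone on each half-line (so the supremum over $D$ is attained at the relevant endpoint $t^*$), and then use the sign of $t^*\Lambda'(t^*)-\Lambda(t^*)$ on $I$ to obtain $\Lambda(pt^*)-p\Lambda(t^*)<0$ for $p$ slightly above $1$. The only cosmetic difference is the order of operations (you reduce to the endpoint first and then do the one-variable analysis in $p$, while the paper introduces the function of $p$ at the outset); the key lemma and all estimates coincide.
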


\begin{proof}
For $t_0\in\mathbb R$, set $g_{t_0}(p)=\Lambda(p t_0)-p \Lambda (t_0)$, whose derivative is
$$g_{t_0}'(p)=t_0\Lambda'(p t_0)-\Lambda (t_0).$$
If $t_0\in I$, then $g_{t_0}'(1)=t_0\Lambda'( t_0)-\Lambda (t_0)<0$. Hence there exists a $p_0>1$ such that $g_{t_0}(p)$ is strictly decreasing on $(1,p_0]$, so that $g_{t_0}(p)<g_{t_0}(1)=0$ for all $p\in(1,p_0]$.

Assume $D\subset [0,\infty)$. Since $t_2\in I$, there exists $p_2>1$ such that $g_{t_2}(p)<0$ for all $p\in(1,p_2]$. For $p>1$ fixed, set $f_p(t)=\log m_0(p t)-p\log m_0(t)$. Clearly, $\mathbb Ef_p(t)=g_t(p)$. The derivative of  $f_p(t)$ is
$$f'_p(t)=p\left(\frac{m_0'(p t)}{m_0(p t)}-\frac{m_0'( t)}{m_0(t)}\right).$$
By the convexity of $\log m_0(t)$, we see that the function $\frac{m_0'( t)}{m_0(t)}$ is increasing, so that $f'_p(t)\geq0$ for $t\geq0$ and $f'_p(t)\leq0$ for $t\leq0$. Thus $f_p(t)$ is increasing on $[0,\infty)$ and decreasing on $(-\infty, 0]$. Since $D\subset [0,\infty)$, we have $$\sup\limits_{t\in D}f_p (t)=f_p (t_2).$$
Take $p_D=p_2$. Then for any $p\in (1, p_D]$,
$$\mathbb E\log \sup_{t\in D}\left[\frac{m_0(pt)}{m_0(t)^p}\right]=\mathbb E\sup_{t\in D}\log \left[\frac{m_0(pt)}{m_0(t)^p}\right]=\mathbb E\sup\limits_{t\in D}f_p (t)=\mathbb E f_p (t_2)=g_{t_2}(p)<0.$$
For $D\subset (-\infty, 0]$, the proof is similar.
\end{proof}

\begin{re} \emph{Set $G_{t_0}(p)=\mathbb E\left[\frac{m_0(pt)}{m_0(t)^p}\right]$ for $t_0\in\mathbb R$ fixed. If $t_0\in I'$, then there exists a $p_0>1$ such tat $G_{t_0}(p_0)<1$. The log convexity of $G_{t_0}(p)$ yields $G_{t_0}(p)<1$ for any $p\in (1, p_0]$. Notice that $G_t(p)=\mathbb E e^{f_p(t)}$. With similar arguments to the proof of Lemma \ref{UL1}, we can obtain that if $D=[t_1,t_2]\subset I'$, then there exists a constant $p_D>1$ such that for any $p\in(1,p_D]$,
$$\sup_{t\in D}\mathbb E\left[\frac{m_0(pt)}{m_0(t)^p}\right]<1.$$
This result could be used to study of the annealed uniform convergence in the role of  replacing Lemma \ref{UL1} for quenched case.}
\end{re}

\bigskip

Recall that $\underline m_0=\inf\limits_{t\in I}m_0(t)$ and $\underline m=essinf \underline m_0$.

\begin{lem}\label{UL2}
Let $D=[t_1,t_2]\subset I\bigcap\Omega_1$. If $\mathbb E\log^- \underline m_0<\infty$, then  there exists a constant $p_D>1$ such that
$$\mathbb E \log \sup_{t\in D}\mathbb E_\xi W_1(t)^{p_D}<\infty.$$
\end{lem}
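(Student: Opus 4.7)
The plan is to reduce the supremum over the interval $D=[t_1,t_2]$ to the two endpoints, exploiting the monotonicity of $t\mapsto e^{tL_i}$ for each particle (increasing if $L_i\geq 0$, decreasing if $L_i<0$). This gives the pointwise bound $\sup_{t\in D}e^{tL_i}\leq e^{t_1L_i}+e^{t_2L_i}$. Summing over $i$, dividing by $m_0(t)$, and using $m_0(t)\geq \underline m_0$ for $t\in D\subset I$, I would obtain
\[
\sup_{t\in D} W_1(t)\;\leq\;\frac{m_0(t_1)W_1(t_1)+m_0(t_2)W_1(t_2)}{\underline m_0}.
\]
Raising to the $p$-th power via $(a+b)^p\leq 2^{p-1}(a^p+b^p)$, taking $\mathbb E_\xi$, and using the trivial estimate $\sup_{t}\mathbb E_\xi(\cdot)\leq \mathbb E_\xi\sup_{t}(\cdot)$, this yields
\[
\sup_{t\in D}\mathbb E_\xi W_1(t)^p\;\leq\;\frac{2^{p-1}}{\underline m_0^{\,p}}\Bigl[m_0(t_1)^{p}\,\mathbb E_\xi W_1(t_1)^{p}+m_0(t_2)^{p}\,\mathbb E_\xi W_1(t_2)^{p}\Bigr].
\]

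The choice of $p_D$ will come from the hypothesis $t_1,t_2\in\Omega_1$: there exist $\gamma_1,\gamma_2>1$ such that $\mathbb E\log\mathbb E_\xi W_1(t_i)^{\gamma_i}<\infty$ for $i=1,2$. I set $p_D\in(1,\min(\gamma_1,\gamma_2)]$, which is $>1$. Lyapunov's inequality under $\mathbb P_\xi$ gives $\log\mathbb E_\xi W_1(t_i)^{p_D}\leq (p_D/\gamma_i)\log\mathbb E_\xi W_1(t_i)^{\gamma_i}$ (with nonnegative right-hand side since $\mathbb E_\xi W_1(t_i)\geq 1$ and Jensen's inequality), so $\mathbb E\log\mathbb E_\xi W_1(t_i)^{p_D}<\infty$ for $i=1,2$.

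To finish, I would take $\log$ of the moment bound and then $\mathbb E$, and apply $\log(A+B)\leq\log 2+\log^+A+\log^+B$ together with $\log^+(xy)\leq\log^+x+\log^+y$. The upper bound splits into an absolute constant, the term $p_D\,\mathbb E\log^-\underline m_0$ (finite by hypothesis), the terms $p_D\,\mathbb E\log^+m_0(t_i)\leq p_D\,\mathbb E|\log m_0(t_i)|$ (finite by~(\ref{H1})), and the terms $\mathbb E\log^+\mathbb E_\xi W_1(t_i)^{p_D}=\mathbb E\log\mathbb E_\xi W_1(t_i)^{p_D}$ (finite by the Lyapunov step). This yields $\mathbb E\log\sup_{t\in D}\mathbb E_\xi W_1(t)^{p_D}<\infty$, as required.

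The main obstacle is conceptual rather than technical: since the exponent $\gamma$ in the definition of $\Omega_1$ is allowed to depend on the base point $t$, one cannot hope for a single uniform exponent valid throughout $D$ by a direct interpolation argument. The monotonicity trick bypasses this difficulty by collapsing the supremum to the two endpoint evaluations, so that only the two pointwise hypotheses at $t_1$ and $t_2$ need to be combined via the common exponent $p_D=\min(\gamma_1,\gamma_2)$.
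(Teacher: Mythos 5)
Your proof is correct and takes essentially the same approach as the paper: you reduce the supremum over $D$ to the two endpoints via the monotonicity of $t\mapsto e^{tL_i}$ (your bound $\sup_{t\in D}W_1(t)\leq \underline m_0^{-1}\left[m_0(t_1)W_1(t_1)+m_0(t_2)W_1(t_2)\right]$ is the paper's, since $m_0(t_i)W_1(t_i)=\tilde Z_1(t_i)$), and then take $p_D$ to be the minimum of the exponents certifying $t_1,t_2\in\Omega_1$. You merely make the logarithm arithmetic and the Lyapunov-type reduction to a common exponent more explicit than the paper does.
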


\begin{proof}Since $D\subset I$, we have $\inf\limits_{t\in D}m_0(t)\geq \underline m_0$. Notice that for all $t\in D$,
\begin{eqnarray}
W_1(t)=\sum_{u\in\mathbb T_1}\frac{e^{tS_u}}{m_0(t)}
&\leq&\frac{1}{\underline m_0}\left(\sum_{u\in\mathbb T_1}e^{t_2S_u}\mathbf 1_{\{S_u\geq0\}}+\sum_{u\in\mathbb T_1}e^{t_1S_u}\mathbf 1_{\{S_u<0\}}\right)\nonumber\\
&\leq&\frac{1}{\underline m_0}\left(\tilde Z_1(t_1)+\tilde Z_1(t_2)\right)
\end{eqnarray}
Thus $\mathbb E \log \sup_{t\in D}\mathbb E_\xi W_1(t)^{p}<\infty$ if $\mathbb E\log ^-\underline m_0<\infty$ and $\mathbb E\log^+\mathbb E\tilde Z_1(t_i)^p<\infty$ ($i=1,2$). Since $t_i\in\Omega_1$, there exists $p_i>1$ such that $\mathbb E\log \mathbb E_\xi W_1(t_i)^{p_i}<\infty$, which is equivalent to $\mathbb E\log^+ \mathbb E_\xi \tilde Z_1(t_i)^{p_i}<\infty$ under condition $\mathbb E|\log m_0(t_i)|<\infty$. Taking $p_D=\min\{p_1, p_2\}$ completes the proof.

\end{proof}

\begin{lem}\label{UL3}
Let $D=[t_1,t_2]\subset I\bigcap\Omega_2$ and $W_D^*=\sup\limits_{t\in D}W_1(t)$. If $\underline m>0$, then
$\mathbb E W_D^*\log^+W_D^* <\infty$.
\end{lem}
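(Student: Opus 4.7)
The plan is to reuse the pointwise (in $t$) bound for $W_1(t)$ established in the proof of Lemma \ref{UL2} and then to argue that the resulting upper envelope has a finite $L\log L$ moment, using that the endpoints $t_1,t_2$ sit inside the open set $\Omega_2$.

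First, I would reproduce the bound from the proof of Lemma \ref{UL2}: for every $t\in D=[t_1,t_2]$, splitting the sum according to the sign of $S_u$ and using monotonicity of $e^{tS_u}$ in $t$ on each piece,
\[
W_1(t)=\sum_{u\in\mathbb T_1}\frac{e^{tS_u}}{m_0(t)}\leq \frac{1}{\underline m_0}\bigl(\tilde Z_1(t_1)+\tilde Z_1(t_2)\bigr).
\]
Since this bound is uniform in $t\in D$, taking the supremum yields $W_D^*\leq \underline m_0^{-1}Y$ with $Y:=\tilde Z_1(t_1)+\tilde Z_1(t_2)$. Invoking the hypothesis $\underline m=\operatorname{ess\,inf}\underline m_0>0$, we may replace $\underline m_0$ by the deterministic positive constant $\underline m$, obtaining the a.s.\ deterministic-coefficient bound $W_D^*\leq M Y$ with $M:=1/\underline m<\infty$.

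Next, I would show $\mathbb EY\log^+Y<\infty$. Since $D\subset I\cap\Omega_2$ and $\Omega_2$ is open, both endpoints $t_1,t_2$ belong to $\Omega_2$, so $\mathbb E\tilde Z_1(t_i)\log^+\tilde Z_1(t_i)<\infty$ for $i=1,2$. The elementary inequalities $Y\leq 2\max(\tilde Z_1(t_1),\tilde Z_1(t_2))$ together with $\log^+(2x)\leq \log 2+\log^+x$ and $\max_i a_i\log^+\max_i a_i\leq \sum_i a_i\log^+a_i$ give
\[
Y\log^+Y\leq 2\log 2\cdot Y+2\sum_{i=1,2}\tilde Z_1(t_i)\log^+\tilde Z_1(t_i).
\]
Since $x\leq e+x\log^+x$ for $x\geq 0$, each $\tilde Z_1(t_i)$ is also integrable, so the right-hand side has finite expectation.

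Finally, the passage from $\mathbb EY\log^+Y<\infty$ to $\mathbb EW_D^*\log^+W_D^*<\infty$ is a one-line calculation: the function $x\mapsto x\log^+x$ is nondecreasing on $[0,\infty)$, so $W_D^*\log^+W_D^*\leq MY\log^+(MY)$, and using $\log^+(MY)\leq\log^+M+\log^+Y$ we get
\[
\mathbb EW_D^*\log^+W_D^*\leq M\log^+M\cdot\mathbb EY+M\cdot\mathbb EY\log^+Y<\infty.
\]
There is no real obstacle here; the only point that needs a little care is ensuring that the envelope bound for $W_1(t)$ is genuinely uniform in $t\in D$, which is exactly what the two-term splitting at $S_u=0$ in Lemma \ref{UL2} provides, and that $\underline m>0$ lets us remove the random factor $1/\underline m_0$ without disturbing integrability.
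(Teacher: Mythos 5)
Your argument is correct and follows the same route as the paper: bound $W_D^*$ by $\underline m^{-1}(\tilde Z_1(t_1)+\tilde Z_1(t_2))$ using the two-sided splitting from the proof of Lemma \ref{UL2}, then pass the $L\log L$ condition from the endpoints $t_1,t_2\in\Omega_2$ to $W_D^*$. The paper states this last step without the elementary inequalities; you have simply filled in the bookkeeping, and it is correct.
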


\begin{proof}
By (4.1), we see that $$W_D^*\leq \frac{1}{\underline m}\left(\tilde Z_1(t_1)+\tilde Z_1(t_2)\right).$$
Since $t_i\in \Omega_2$ ($i=1,2$), we have $\mathbb E \tilde Z_1(t_i)\log^+\tilde Z_1(t_i)<\infty$, which ensures $\mathbb E W_D^*\log^+W_D^* <\infty$.
\end{proof}

\bigskip
Now we prove Theorem \ref{LPBRWU1}.

\begin{proof}[Proof of Theorem  \ref{LPBRWU1}]
We first consider the assertion (a). Clearly, it suffices to  prove that for each $t_0\in I\bigcap\Omega_1$, there exists an interval $D=[t_0-\varepsilon, t_0+\varepsilon]\subset I\bigcap\Omega_1$ ($\varepsilon>0$ small enough) such that the series
\begin{equation}\label{UPE1}
\sum_n \sup_{t\in D}\left(\mathbb E_\xi |W_{n+1}(t)-W_n(t)|^p\right)^{1/p}<\infty \qquad a.s.
\end{equation}
for suitable $1<p\leq 2$. By (\ref{que1}),  we have a.s.,
\begin{eqnarray}\label{UPE2}
\sup_{t\in D}\left(\mathbb E_\xi |W_{n+1}(t)-W_n(t)|^p\right)^{1/p}
&\leq& C\frac{P_n(pt)^{1/p}}{P_n(t)}\left(\mathbb E_{T^n\xi}
|W_1(t)-1|^p\right)^{1/p}\nonumber\\
&\leq& C\left(\sup_{t\in D}\left[\frac{P_n(pt)}{P_n(t)^p}\right]\right)^{1/p}\left(\sup_{t\in D}\mathbb E_{T^n\xi}|W_1(t)-1|^p\right)^{1/p}.
\end{eqnarray}
Here and after the general constant $C$ does not depend on $t$. Decompose $D=D^+\bigcup D^-$, where $D^+=D\bigcap[0,\infty)$ and $D^-=(-\infty, 0]$. Then
\begin{eqnarray}\label{UPE3}
\sup_{t\in D}\left[\frac{P_n(pt)}{P_n(t)^p}\right]&\leq&\max\left\{\sup_{t\in D^+}\left[\frac{P_n(pt)}{P_n(t)^p}\right],\sup_{t\in D^-}\left[\frac{P_n(pt)}{P_n(t)^p}\right]\right\}\nonumber\\
&\leq&\max\left\{\prod_{i=0}^{n-1}\sup_{t\in D^+}\left[\frac{m_i(pt)}{m_i(t)^p}\right],\prod_{i=0}^{n-1}\sup_{t\in D^-}\left[\frac{m_i(pt)}{m_i(t)^p}\right]\right\}
\end{eqnarray}
By ergodic theorem and Lemma \ref{UL1}, a.s.,
\begin{equation}\label{UPE4}
\lim_{n\rightarrow\infty}\frac{1}{n}\log\left(\prod_{i=0}^{n-1}\sup_{t\in D^+}\left[\frac{m_i(pt)}{m_i(t)^p}\right]\right)=\mathbb E\log \sup_{t\in D^+}\left[\frac{m_0(pt)}{m_0(t)^p}\right]<0
\end{equation}
for suitable $1<p\leq2$. The same is true with $D^+$ replaced by $D^-$. Thus, there exists a constant $a_D>1$ such that
\begin{equation}\label{UPE5}
\max\left\{\prod_{i=0}^{n-1}\sup_{t\in D^+}\left[\frac{m_i(pt)}{m_i(t)^p}\right],\prod_{i=0}^{n-1}\sup_{t\in D^-}\left[\frac{m_i(pt)}{m_i(t)^p}\right]\right\}<a_D^{-n}\qquad a.s.
\end{equation}
for $n$ large enough. Combing (\ref{UPE3}) and (\ref{UPE5}) yields
\begin{equation}\label{UPE5}
\sup_{t\in D}\left[\frac{P_n(pt)}{P_n(t)^p}\right]<a_D^{-n}\qquad a.s.
\end{equation}
for $n$ large enough. Hence the a.s. convergence of the series
\begin{equation}\label{UPE6}
\sum_n a_D^{-n}\left(\sup_{t\in D}\mathbb E_{T^n\xi}|W_1(t)-1|^p\right)^{1/p}
\end{equation}
implies (\ref{UPE1}). Since $a_D>1$ and $\mathbb E \log \sup\limits_{t\in D}\mathbb E_\xi W_1(t)^{p}<\infty$, the a.s. convergence of (\ref{UPE6}) is ensured by Lemma \ref{LPBRWL2.1}.

We next prove the assertion (b). For $t_0\in I\bigcap\Omega_2$,  take $\varepsilon>0$ small enough such that the series
\begin{equation}\label{UPEE1}
\sum_n \sup_{t\in D}\mathbb E_\xi |W_{n+1}(t)-W_n(t)|<\infty \qquad a.s..
\end{equation}
We will use a truncation method, similarly to Biggins \cite{b91}. Set  $I_n=\mathbf 1_{\{|W_1(t)-1|\geq c^n\}}$ and $\bar I_n=1-I_n$, where $c>1$
is a constant whose value will be taken later. Using (\cite{b91}, Lemma 4), we get for $1\leq p\leq2$,
\begin{equation*}
\mathbb E_\xi |W_{n+1}(t)-W_n(t)|\leq C\left(\mathbb E_{T^n\xi}|W_1(t)-1|I_n+\left(\mathbb E_{T^n\xi}|W_1(t)-1|^p\bar I_n\right)^{1/p}\left[\frac{P_n(pt)}{P_n(t)^p}\right]^{1/p}\right).
\end{equation*}
To get (\ref{UPEE1}), we  need to consider the a.s. convergence of the two series:
\begin{equation}\label{UPEE2}
\sum_n\sup_{t\in D}\mathbb E_{T^n\xi}|W_1(t)-1|I_n,
\end{equation}
\begin{equation}\label{UPEE3}
\sum_n\left(\sup_{t\in D}\mathbb E_{T^n\xi}|W_1(t)-1|^p\bar I_n\right)^{1/p}\left(\sup_{t\in D}\left[\frac{P_n(pt)}{P_n(t)^p}\right]\right)^{1/p}.
\end{equation}
For (\ref{UPEE2}), observe that
$$\sup_{t\in D}\mathbb E_{T^n\xi}|W_1(t)-1|I_n\leq\mathbb E_{T^n\xi}(W^*_D+1)\mathbf 1_{\{W^*_D+1\geq c^n\}}.$$
By Lemma (\ref{UL3}), we see  $\mathbb E W_D^*\log^+W_D^* <\infty$. Thus,
\begin{eqnarray*}
\mathbb E\left(\sum_n\mathbb E_{T^n\xi}(W^*_D+1)\mathbf 1_{\{W^*_D+1\geq c^n\}}\right)&=&\sum_n\mathbb E(W^*_D+1)\mathbf 1_{\{W^*_D+1\geq c^n\}}\\
&\leq &C \mathbb E(W^*_D+1)\log (W^*_D+1)<\infty,
\end{eqnarray*}
which leads to the a.s. convergence of (\ref{UPEE2}). For (\ref{UPEE3}), Notice (\ref{UPE6}) and the fact that
$$\sup_{t\in D}\mathbb E_{T^n\xi}|W_1(t)-1|^p\bar I_n\leq c^{np}.$$
Taking $1<c<a_D$ yields the a.s. convergence of (\ref{UPEE2}). The proof is completed.
\end{proof}


\section{Moderate deviation principles }\label{LPBRWS5}

\subsection{Moderate deviation principle for $\frac{\mathbb{E}_\xi Z_n(a_n\cdot)}{\mathbb{E}_\xi Z_n(\mathbb{R})}$; Proof of Theorem \ref{MDP}}
We first study the moderate deviations for the quenched means.

\begin{thm}[Moderate deviation principle for quenched means $\frac{\mathbb{E}_\xi Z_n(a_n\cdot)}{\mathbb{E}_\xi Z_n(\mathbb{R})}$]\label{LTT2.5.1} Write $\pi_0=m_0(0)$.
If  $\|\frac{1}{\pi_0}\mathbb{E}_\xi\sum\limits_{i=1}^Ne^{\delta |L_i|}\|_\infty:=esssup\frac{1}{\pi_0}\mathbb{E}_\xi\sum\limits_{i=1}^Ne^{\delta |L_i|}<\infty$ for some
$\delta>0$ and $\mathbb{E}_\xi\sum\limits_{i=1}^NL_i=0\;a.s.$,  then the sequence of probability measures
$A \mapsto  \frac{\mathbb{E}_\xi Z_n (a_nA)}{\mathbb{E}_\xi Z_n (\mathbb{R})}$ satisfies a principle of moderate deviation: for each measurable subset $A$ of
$\mathbb{R}$,

\begin{eqnarray}
-\frac{1}{2{\sigma}^2}\inf_{x\in A^o
}x^2&\leq&\liminf_{n\rightarrow\infty}\frac{n}{a_n^2}\log
\frac{\mathbb{E}_\xi Z_n(a_nA)}{\mathbb{E}_\xi Z_n}\notag \\
&\leq&\limsup_{n\rightarrow\infty}\frac{n}{a_n^2}\log
\frac{\mathbb{E}_\xi Z_n(a_nA)}{\mathbb{E}_\xi
Z_n}\leq-\frac{1}{2{\sigma}^2}\inf_{x\in \bar{A}}x^2\qquad a.s.,\label{LTE2.5.1}
\end{eqnarray}
where ${\sigma}^2=\mathbb{E}\frac{1}{\pi_0}\sum\limits_{i=1}^NL_i^2$,  and $A^o$ denotes the interior of $A$ and $\bar A$ its closure.
\end{thm}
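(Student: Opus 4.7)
The plan is to reduce the statement to a moderate deviation principle for sums of independent (but non-identically distributed) random variables via the many-to-one formula. For fixed environment $\xi$, define a random walk $S_n^* = X_1 + \cdots + X_n$ under an auxiliary probability $\mathbb{P}_\xi^*$ on which the increments $(X_i)$ are independent and $X_i$ has the size-biased displacement law
\[
\mathbb{E}_\xi^* f(X_i) = \frac{1}{m_{i-1}(0)}\, \mathbb{E}_\xi \sum_{j=1}^{N} f(L_j),
\]
where $(N, L_1, L_2, \ldots)$ is distributed as $\eta(\xi_{i-1})$. The standard many-to-one argument then gives $\mathbb{E}_\xi Z_n(a_n A) = P_n(0)\,\mathbb{P}_\xi^*(S_n^*/a_n \in A)$, so that
\[
\frac{\mathbb{E}_\xi Z_n(a_n A)}{\mathbb{E}_\xi Z_n(\mathbb{R})} = \mathbb{P}_\xi^*(S_n^*/a_n \in A).
\]
The assumption $\mathbb{E}_\xi \sum_j L_j = 0$ a.s.\ gives $\mathbb{E}_\xi^* X_i = 0$ a.s., while the essential-supremum bound on $\frac{1}{\pi_0}\mathbb{E}_\xi \sum_i e^{\delta|L_i|}$ provides $\operatorname{esssup}_\xi \mathbb{E}_\xi^* e^{\delta|X_i|} < \infty$, i.e.\ uniform exponential moments for the increments.

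I would then apply the G\"artner-Ellis theorem at speed $a_n^2/n$. Set
\[
\Lambda_n(t) := \frac{n}{a_n^2}\log \mathbb{E}_\xi^* \exp\!\left(t\,\frac{a_n}{n}S_n^*\right) = \frac{n}{a_n^2}\sum_{i=1}^{n}\log \mathbb{E}_\xi^* \exp\!\left(t\,\frac{a_n}{n}X_i\right).
\]
Because $a_n/n \to 0$, for $n$ large the scaling $t\,a_n/n$ lies in the region where the uniform exponential bound yields a third-order cumulant expansion
\[
\log \mathbb{E}_\xi^* e^{t(a_n/n) X_i} = \frac{t^2}{2}\!\left(\frac{a_n}{n}\right)^{\!2}\mathbb{E}_\xi^* X_i^2 + O\!\left(\!\left(\frac{a_n}{n}\right)^{\!3}\mathbb{E}_\xi^* |X_i|^3\right),
\]
with a remainder constant independent of $\xi$. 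Summing over $i$ gives
\[
\Lambda_n(t) = \frac{t^2}{2n}\sum_{i=1}^{n}\mathbb{E}_\xi^* X_i^2 + O\!\left(\frac{a_n}{n}\right).
\]
Since $\mathbb{E}_\xi^* X_i^2 = \frac{1}{m_{i-1}(0)}\mathbb{E}_\xi \sum_j L_j^2$ is a stationary ergodic function of $T^{i-1}\xi$, Birkhoff's ergodic theorem yields
\[
\frac{1}{n}\sum_{i=1}^{n}\mathbb{E}_\xi^* X_i^2 \longrightarrow \mathbb{E}\!\left[\frac{1}{\pi_0}\sum_{j=1}^{N}L_j^2\right] = \sigma^2 \qquad \text{a.s.},
\]
so that $\Lambda_n(t) \to \frac{t^2\sigma^2}{2}$ a.s.\ for every $t\in\mathbb{R}$. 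Since the limit $\Lambda(t)=t^2\sigma^2/2$ is finite, differentiable and strictly convex on all of $\mathbb{R}$, the G\"artner-Ellis theorem delivers an LDP for $S_n^*/a_n$ under $\mathbb{P}_\xi^*$ with speed $a_n^2/n$ and rate function $I(x)=\sup_t(tx - t^2\sigma^2/2) = x^2/(2\sigma^2)$, which is exactly (\ref{LTE2.5.1}).

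The only mildly delicate point is to ensure that the a.s.\ convergence of $\Lambda_n(t)$ holds on a single $\xi$-event independent of $t$: since $\Lambda$ is continuous, it suffices to establish convergence on a countable dense set of $t$ and invoke convexity of $\Lambda_n$ to upgrade pointwise to uniform-on-compacts convergence. The main technical obstacle is to justify the third-order Taylor expansion uniformly in $\xi$, but this is a routine consequence of the essential-supremum exponential moment hypothesis together with $a_n/n \to 0$; all other ingredients are standard large deviation theory.
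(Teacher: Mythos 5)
Your proposal is correct and is essentially the same argument as the paper's: after unwinding the many-to-one notation, $\mathbb{E}_\xi^* e^{t(a_n/n)X_i}=m_{i-1}(ta_n/n)/\pi_{i-1}$, so your $\Lambda_n(t)$ is precisely the paper's $\frac{n}{a_n^2}\lambda_n(\frac{a_n^2}{n}t)=\frac{n}{a_n^2}\sum_{i=0}^{n-1}\log\bigl(m_i(ta_n/n)/\pi_i\bigr)$, and both proofs establish $\Lambda_n(t)\to\frac{1}{2}\sigma^2 t^2$ a.s.\ via a term-by-term Taylor expansion controlled by the essential-supremum exponential moment bound, followed by the ergodic theorem, the rational-to-real upgrade by convexity, and G\"artner--Ellis. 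The only difference is cosmetic: the paper expands $\log(1+\Delta_{n,i})$ with $\Delta_{n,i}=m_i(ta_n/n)/\pi_i-1$ explicitly, while you phrase the same estimate as a cumulant expansion under the tilted walk law.
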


\begin{proof}[Proof]
Consider the probability measures
$q_n(\cdot)=\frac{\mathbb{E}_\xi Z_n(a_n\cdot)}{\mathbb{E}_\xi Z_n(\mathbb{R})}$.
Let
$$\lambda_n(t)=\log\int e^{tx}q_n(dx)=\log \left[\frac{\mathbb{E}_\xi\int e^{a_n^{-1}tx}Z_n(dx)}{\mathbb{E}_\xi Z_n(\mathbb{R})}\right].$$
Then
\begin{equation}\label{MDE1}
\lambda_n(t)=\log\left[\frac{P_n(a_n^{-1}t)}{P_n(0)}\right]=\sum_{i=0}^{n-1}\log \left[\frac{m_i(a_n^{-1}t)}{\pi_i}\right].
\end{equation}
Set $\lambda(t)=\frac{1}{2}{\sigma}^2t^2$, whose
Legendre-transform  is
$\lambda^*(x)=\sup_{x\in{\mathbb{R}}}\{tx-\lambda(t)\}=\frac{x^2}{2{\sigma}^2}$. We shall show that for each $t\in\mathbb R$,
\begin{equation}\label{LTE2.5.3}
\lim_{n\rightarrow\infty}\frac{n}{a_n^2}\lambda_n(\frac{a_n^2}{n}t)=\lambda(t)\qquad a.s..
\end{equation}
Then (\ref{LTE2.5.3}) a.s. holds for all rational $t$, and hence for all $t\in\mathbb R$ by the convexity of $\lambda_n(t)$ and the continuity of $\lambda(t)$. By the G\"{a}rtner-Ellis theorem (cf. \cite{z}, p52, Exercises 2.3.20), we get (\ref{LTE2.5.1}).

Put $\Delta_{n,i}=\frac{m_i(\frac{a_n}{n}t)}{\pi_i}-1$. We will see that  for each $t\in\mathbb R$,
\begin{equation}\label{MDE2}
\sup_i|\Delta_{n,i}|<1\qquad a.s.
\end{equation}
 for $n$ large enough.
Denote $M=\|\frac{1}{\pi_0}\mathbb{E}_\xi\sum\limits_{i=1}^Ne^{\delta |L_i|}\|_\infty$. Then $\sup_n\frac{1}{\pi_n}\mathbb{E}_\xi\int e^{\delta|x|}X_n(dx)\leq M$ a.s., so that  for $n$ large enough,
$$\sum_{k=0}^{\infty}\frac{1}{\pi_i}\mathbb{E}_\xi\int\frac{1}{k!}\left|\frac{a_n}{n}tx\right|^kX_i(dx)\leq M \qquad a.s..$$
Notice that $\frac{1}{\pi_n}\mathbb{E}_\xi\int xX_n(dx)=\frac{1}{\pi_n}\mathbb{E}_\xi\sum\limits_{i=1}^NL_i=0$ a.s..
Therefore a.s.,
\begin{eqnarray}\label{LTE2.5.4}
\Delta_{n,i}&=&\frac{1}{\pi_i}\mathbb{E}_\xi\int e^{a_nn^{-1}tx}X_i(dx)-1\nonumber\\
&=&\frac{1}{\pi_i}\mathbb{E}_\xi\int\left(\sum_{k=0}^{\infty}\frac{1}{k!}\left(\frac{a_n}{n}tx\right)^k\right)X_i(dx)-1\nonumber\\
&=&\sum_{k=0}^{\infty}\frac{1}{k!}\left(\frac{a_n}{n}t\right)^k\frac{1}{\pi_i}\mathbb{E}_\xi\int x^kX_i(dx)-1\nonumber\\
&=&\sum_{k=2}^{\infty}\frac{1}{k!}\left(\frac{a_n}{n}t\right)^k\frac{1}{\pi_i}\mathbb{E}_\xi\int x^kX_i(dx).
\end{eqnarray}
It follows that
\begin{eqnarray}\label{LTE2.5.2}
\sup_i|\Delta_{n,i} |
&\leq&\sup_i\left[\sum_{k=2}^{\infty}\frac{1}{k!}\left(\frac{a_n}{n}|t|\right)^k\frac{1}{\pi_i}\mathbb{E}_\xi\int |x|^kX_i(dx)\right]\nonumber\\
&\leq&\sup_i\left[\sum_{k=2}^{\infty}\left(\frac{a_n}{n}\frac{|t|}{\delta} \right)^k\frac{1}{\pi_i}\mathbb{E}_\xi\int e^{ \delta| x|}X_i(dx)\right]\nonumber\\
&\leq& M\sum_{k=2}^{\infty}\left(\frac{a_n}{n}\frac{|t|}{\delta} \right)^k
\leq M_1\left(\frac{a_n}{n}\right)^2\qquad a.s.,
\end{eqnarray}
where $M_1>0$ is a constant (it depends on $t$). Hence (\ref{MDE2}) holds  for $n$ large enough.

Now we  calculate the limit (\ref{LTE2.5.3}). By (\ref{MDE1}) and (\ref{MDE2}), we have for $n$ large enough, a.s.,
\begin{eqnarray*}
\frac{n}{a_n^2}\lambda_n(\frac{a_n^2}{n}t)
&=&\frac{n}{a_n^2}\sum_{i=0}^{n-1}\log\left(1+  \Delta_{n,i} \right)\\
&=&\frac{n}{a_n^2}\sum_{i=0}^{n-1}\sum_{j=1}^{\infty}\frac{(-1)^{j+1}}{j}\left( \Delta_{n,i}\right)^j\\
&=&\frac{n}{a_n^2}\sum_{j=1}^{\infty}\frac{(-1)^{j+1}}{j}\sum_{i=0}^{n-1}\left( \Delta_{n,i}\right)^j\\
&=&\frac{n}{a_n^2}\sum_{i=0}^{n-1}\Delta_{n,i}
+\frac{n}{a_n^2}\sum_{j=2}^{\infty}\frac{(-1)^{j+1}}{j}\sum_{i=0}^{n-1}\left( \Delta_{n,i}\right)^j\\
&=&:A_n+B_n.
\end{eqnarray*}
For $B_n$, by (\ref{LTE2.5.2}),
\begin{eqnarray*}
|B_n|\leq\frac{n}{a_n^2}\sum_{j=2}^{\infty}\frac{1}{j}\sum_{i=0}^{n-1}\left|\Delta_{n,i}\right|^j
\leq\sum_{j=2}^{\infty}M_1^j\left(\frac{a_n}{n}\right)^{2j-2}
\leq M_2\left(\frac{a_n}{n}\right)^2\rightarrow0\quad a.s.\quad \text{as}\;n\rightarrow\infty,
\end{eqnarray*}
where $M_2>0$ is a constant.  For  $A_n$, by (\ref{LTE2.5.4}), a.s.,
\begin{eqnarray*}
A_n&=&\frac{n}{a_n^2}\sum_{i=0}^{n-1}\sum_{k=2}^{\infty}\frac{1}{k!}\left(\frac{a_n}{n}t\right)^k\frac{1}{\pi_i}\mathbb{E}_\xi\int x^kX_i(dx)\\
&=&\frac{n}{a_n^2}\sum_{k=2}^{\infty}\frac{1}{k!}\left(\frac{a_n}{n}t\right)^k\sum_{i=0}^{n-1}\frac{1}{\pi_i}\mathbb{E}_\xi\int x^kX_i(dx)\\
&=&\frac{n}{a_n^2}\frac{1}{2}\left(\frac{a_n}{n}t\right)^2\sum_{i=0}^{n-1}\frac{1}{\pi_i}\mathbb{E}_\xi\int x^2X_i(dx)\\
&&+\frac{n}{a_n^2}\sum_{k=3}^{\infty}\frac{1}{k!}\left(\frac{a_n}{n}t\right)^k\sum_{i=0}^{n-1}\frac{1}{\pi_i}\mathbb{E}_\xi\int x^kX_i(dx)\\
&=&:C_{n}+D_{n}.
\end{eqnarray*}
The ergodic theorem gives
\begin{eqnarray*}
\lim_{n\rightarrow\infty}C_{n}=\frac{1}{2}t^2\lim_{n\rightarrow\infty}\frac{1}{n}\sum_{i=0}^{n-1}\frac{1}{\pi_i}\mathbb{E}_\xi\int x^2X_i(dx)=\frac{1}{2}{\sigma}^2t^2=\lambda(t)\quad a.s..
\end{eqnarray*}
To get (\ref{LTE2.5.3}), it remains to show that $D_{n}$ is negligible. Clearly, a.s.
\begin{eqnarray*}
|D_{n}|&\leq&\frac{n}{a_n^2}\sum_{k=3}^{\infty}\frac{1}{k!}\left(\frac{a_n}{n}\frac{|t|}{\delta}\right)^k
\sum_{i=0}^{n-1}\frac{1}{\pi_i}\mathbb{E}_\xi\int |\delta x|^kX_i(dx)\\
&\leq& M\sum_{k=3}^{\infty}\left(\frac{a_n}{n}\frac{|t|}{\delta} \right)^{k-2}\leq M_3\frac{a_n}{n}
\rightarrow0\qquad a.s.\qquad \text{as}\;n\rightarrow\infty,
\end{eqnarray*}
where $M_3>0$ is a constant. This completes the proof.
\end{proof}

The moderate deviation principle for $\frac{Z_n(a_n\cdot)}{Z_n(\mathbb R)}$ comes from Theorem \ref{LTT2.5.1} and the uniform convergence of $W_n(t)$ (Theorem \ref{LPBRWU1}).

\begin{proof}[Proof of Theorem \ref{MDP}]
Let
$$\Gamma_n(t)=\log\left[\frac{\int e^{a_n^{-1}tx}Z_n(dx)}{Z_n(\mathbb R)}\right]=\log\left[\frac{\tilde Z_n(a_n^{-1}t)}{Z_n(\mathbb R)}\right].$$
Notice that
\begin{equation}\label{MEP1}
\frac{n}{a_n^2}\Gamma_n(\frac{a_n^2}{n}t)=\frac{n}{a_n^2}\log W_n(\frac{a_n}{n}t)+\frac{n}{a_n^2}\lambda_n(\frac{a_n^2}{n}t)
-\frac{n}{a_n^2}\log W_n(0).
\end{equation}
As $m'_0(0)=\mathbb E_\xi \sum\limits_{i=1}^N L_i=0$, the log convexity of $m_0(x)$ gives $\underline m_0=\pi_0$.
Since $\mathbb E|\log \pi_0|<\infty$ and $0\in I\bigcap\Omega_1$, by Theorem \ref{LPBRWU1}, $W_n(t)$ converges uniformly  a.s. to $W(t)$ on $[-\varepsilon, \varepsilon]$ for some $\varepsilon>0$, so that $W_n(t)$ is continuous at $0$. Thus
$$W_n(\frac{a_n}{n}t)\rightarrow W(0)\qquad a.s. \quad\text{as $n\rightarrow\infty$}.$$
The assumption (\ref{ASS}) implies that $W(0)>0$ a.s. on $\{Z_n(\mathbb R)\rightarrow\infty\}$. Letting $n\rightarrow\infty$ and using (\ref{LTE2.5.3}), we obtain for each $t\in\mathbb R$,
\begin{equation}\label{MEP2}
\lim_{n\rightarrow}\frac{n}{a_n^2}\Gamma_n(\frac{a_n^2}{n}t)
=\lambda(t)=\frac{1}{2}\sigma^2t^2\quad \text{a.s. on $\{Z_n(\mathbb R)\rightarrow\infty\}$. }
\end{equation}
So (\ref{MEP2}) a.s. holds for all rational $t$, and therefore for all $t\in\mathbb R$ by the convexity of $\Gamma_n(t)$ and the continuity of $\lambda(t)$. Then apply the G\"{a}rtner-Ellis theorem.

\end{proof}

\subsection{Moderate deviation principles for $\mathbb E\frac{ Z_n(a_n\cdot)}{\mathbb{E}_\xi Z_n(\mathbb{R})}$ and $\frac{\mathbb{E}Z_n(a_n\cdot)}{\mathbb{E}Z_n(\mathbb{R})}$}
Finally, in i.i.d environment, we also have  moderate deviation principles for $\mathbb E\frac{ Z_n(a_n\cdot)}{\mathbb{E}_\xi Z_n(\mathbb{R})}$ and $\frac{\mathbb{E}Z_n(a_n\cdot)}{\mathbb{E}Z_n(\mathbb{R})}$.

\begin{thm}[Moderate deviation principle for $\mathbb{E}\frac{Z_n(a_n\cdot)}{\mathbb{E}_\xi Z_n(\mathbb{R})}$]\label{LTT2.5.2}
Assume that $\xi_n$ are i.i.d.. Write $\pi_0=m_0(0)$. If $\mathbb{E}\frac{1}{\pi_0}\sum\limits_{i=1}^Ne^{\delta |L_i|}<\infty$ for some $\delta<0$ and $\mathbb{E}\frac{1}{\pi_0}\sum\limits_{i=1}^NL_i=0$,
 then the sequence of finite measures
$A \mapsto  \mathbb{E}\frac{Z_n (a_nA)}{\mathbb{E}Z_n(\mathbb{R})}$ satisfies a principle of moderate deviation: for each measurable subset $A$ of
$\mathbb{R}$,
\begin{eqnarray}\label{LTE2.5.6}
-\frac{1}{2{\sigma}^2}\inf_{x\in A^o}x^2&\leq&\liminf_{n\rightarrow\infty}\frac{n}{a_n^2}\log
\mathbb{E}\frac{Z_n(a_nA)}{\mathbb{E}_\xi Z_n(\mathbb{R})}\nonumber\\
&\leq&\limsup_{n\rightarrow\infty}\frac{n}{a_n^2}\log
\mathbb{E}\frac{Z_n(a_nA)}{\mathbb{E}_\xi
Z_n(\mathbb{R})}\leq-\frac{1}{2{\sigma}^2}\inf_{x\in \bar{A}}x^2,
\end{eqnarray}
where ${\sigma}^2=\mathbb{E}\frac{1}{\pi_0}\sum\limits_{i=1}^NL_i^2$,  and $A^o$ denotes the interior of $A$ and $\bar A$ its closure.
\end{thm}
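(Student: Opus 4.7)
The plan is to apply the Gärtner--Ellis theorem to the measure $\nu_n(A) := \mathbb{E}\frac{Z_n(a_nA)}{\mathbb{E}_\xi Z_n(\mathbb{R})}$. Since $\mathbb{E}_\xi Z_n(\mathbb{R}) = P_n(0)$ is $\xi$-measurable, the tower property gives $\nu_n = \mathbb{E} q_n$, where $q_n(A) := \frac{\mathbb{E}_\xi Z_n(a_nA)}{\mathbb{E}_\xi Z_n(\mathbb{R})}$ is the quenched probability measure already analyzed in Theorem \ref{LTT2.5.1}. Taking $A=\mathbb{R}$ shows $\nu_n(\mathbb{R})=1$, so $\nu_n$ is itself a probability measure and standard Gärtner--Ellis applies once we identify the limit of the appropriately scaled log-moment generating function.

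Let $\lambda_n(s) := \log\int e^{sx} q_n(dx) = \sum_{i=0}^{n-1}\log\frac{m_i(a_n^{-1}s)}{\pi_i}$, as in the proof of Theorem \ref{LTT2.5.1}. Fubini and the i.i.d.\ assumption on $(\xi_n)$ factorize the annealed moment generating function:
$$\int e^{\frac{a_n^2}{n}tx}\nu_n(dx) \;=\; \mathbb{E}\, e^{\lambda_n(\frac{a_n^2}{n}t)} \;=\; \mathbb{E}\prod_{i=0}^{n-1}\frac{m_i(\frac{a_n}{n}t)}{\pi_i} \;=\; \left(\mathbb{E}\frac{m_0(\frac{a_n}{n}t)}{\pi_0}\right)^{\!n}.$$
Consequently it suffices to prove, for each $t\in\mathbb{R}$,
$$\frac{n}{a_n^2}\log\int e^{\frac{a_n^2}{n}tx}\nu_n(dx) \;=\; \frac{n^2}{a_n^2}\,\log\mathbb{E}\frac{m_0(\frac{a_n}{n}t)}{\pi_0} \;\longrightarrow\; \lambda(t) := \tfrac{1}{2}\sigma^2 t^2.$$

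For this limit I would repeat, under $\mathbb{E}$, the Taylor expansion used for $\Delta_{n,i}$ in the proof of Theorem \ref{LTT2.5.1}. Since $m_0(0)=\pi_0$ and $m_0'(0)=\mathbb{E}_\xi\sum_i L_i = 0$ a.s., one has
$$\frac{m_0(\frac{a_n}{n}t)}{\pi_0} - 1 \;=\; \sum_{k=2}^{\infty}\frac{1}{k!}\Bigl(\frac{a_n}{n}t\Bigr)^{\!k}\frac{1}{\pi_0}\mathbb{E}_\xi\sum_{i=1}^{N}L_i^k.$$
Using the elementary inequality $|x|^k\le (k!/\delta^k)\,e^{\delta|x|}$ together with the assumption $\mathbb{E}\frac{1}{\pi_0}\sum_{i=1}^{N}e^{\delta|L_i|}<\infty$, Fubini applies for $n$ large enough that $a_n|t|/n<\delta$, and the $k=2$ term isolates the variance:
$$\mathbb{E}\frac{m_0(\frac{a_n}{n}t)}{\pi_0} - 1 \;=\; \frac{1}{2}\sigma^2\Bigl(\frac{a_n}{n}t\Bigr)^{\!2} + O\!\Bigl(\frac{a_n^3}{n^3}\Bigr).$$
Applying $\log(1+x) = x + O(x^2)$ and multiplying by $n^2/a_n^2$ then yields the desired limit $\tfrac{1}{2}\sigma^2 t^2$. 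The Gärtner--Ellis theorem (e.g.\ \cite{z}, p.52, Exercise 2.3.20) delivers (\ref{LTE2.5.6}) with rate function $\lambda^*(x) = x^2/(2\sigma^2)$.

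The main obstacle is essentially bookkeeping: the quenched argument bounded the Taylor tail by the essential supremum $M=\|\pi_0^{-1}\mathbb{E}_\xi\sum_i e^{\delta|L_i|}\|_\infty$, whereas here we only have the corresponding expectation to be finite. This is still sufficient because the remainder bound we need is itself taken under $\mathbb{E}$; the geometric-series estimate for the Taylor tail therefore carries through almost verbatim once Fubini is invoked. Apart from this substitution, the computation is parallel to Theorem \ref{LTT2.5.1}, with the i.i.d.\ hypothesis replacing the ergodic theorem when turning the sum over generations into a product.
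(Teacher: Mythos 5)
Your proposal is correct and follows essentially the same route as the paper: both reduce the annealed log-moment generating function to $n\log\mathbb{E}\bigl[m_0(\tfrac{a_n}{n}t)/\pi_0\bigr]$ via the i.i.d.\ hypothesis, Taylor-expand $m_0$ around $0$ (using $m_0'(0)=\mathbb{E}_\xi\sum_i L_i=0$ to kill the linear term), bound the tail by the exponential moment assumption, and invoke G\"artner--Ellis. Your extra remark that the essential-supremum bound of the quenched argument may be weakened to a mere expectation here, since the remainder estimate is taken under $\mathbb{E}$, matches what the paper does implicitly.
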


\begin{proof}[Proof]
Let
$$\lambda_n(t)=\log \mathbb{E}\frac{\int e^{a_n^{-1}tx}Z_n(dx)}{\mathbb{E}_\xi Z_n(\mathbb{R})}\quad\text{and}\quad\lambda(t)=\frac{1}{2}{\sigma}^2t^2.$$
It suffices to show that for all $t\in\mathbb R$,
\begin{equation*}
\lim_{n\rightarrow\infty}\frac{n}{a_n^2}\lambda_n(\frac{a_n^2}{n}t)=\lambda(t).
\end{equation*}
Then (\ref{LTE2.5.6}) holds by  the G\"{a}rtner-Ellis theorem. In fact,
the condition $\mathbb{E}\frac{1}{\pi_0}\sum\limits_{i=1}^Ne^{\delta |L_i|}<\infty$ gives
$$\mathbb E\frac{1}{\pi_0}\int e^{a_nn^{-1}|tx|}X_0(dx)<\infty\quad\text{for $n$ large
enough.}$$  Thus
\begin{eqnarray*}
\lambda_n(\frac{a_n^2}{n}t)&=&n\log \mathbb E\left[\frac{m_0(\frac{a_n}{n}t)}{\pi_0}\right]\\
&=&n\log \mathbb E\frac{1}{\pi_0}\int e^{a_nn^{-1}tx}X_0(dx)\\
&=&n\log \mathbb E\frac{1}{m_0}\int\left(\sum_{k=0}^{\infty}\frac{1}{k!}\left(\frac{a_n}{n}t\right)^kx^k\right)X_0(dx)\\
&=&n\log\left(1+\sum_{k=2}^{\infty}\frac{1}{k!}\left(\frac{a_n}{n}t\right)^k\mathbb E\frac{1}{\pi_0}\int
x^k X_0(dx)\right)\\
&=&n\log\left(1+\frac{1}{2}\left(\frac{a_n}{n}\right)^2t^2{\sigma}^2+o(\left(\frac{a_n}{n}\right)^2)\right).
\end{eqnarray*}
Therefore,
$$\lim_{n\rightarrow\infty}\frac{n}{a_n^2}\lambda_n(\frac{a_n^2}{n}t)
=\lim_{n\rightarrow\infty}\frac{n^2}{a_n^2}
\log\left(1+\frac{1}{2}\left(\frac{a_n}{n}\right)^2t^2\tilde{\sigma}^2+o(\left(\frac{a_n}{n}\right)^2)\right)
=\frac{1}{2}{\sigma}^2t^2.$$
\end{proof}

If we consider the probability measures $\frac{\mathbb{E}Z_n(a_n\cdot)}{\mathbb{E}Z_n(\mathbb{R})}$,
then by  a similar argument to the proof of Theorem \ref{LTT2.5.2}, we obtain the following theorem.

\begin{thm}[Moderate deviation principle for annealed means $ \frac{\mathbb{E}Z_n (a_n\cdot)}{\mathbb{E}Z_n(\mathbb{R})}$]\label{LTT2.5.3}
Assume that $\xi_n$ are i.i.d..  Write $\pi_0=m_0(0)$. If $\mathbb{E} \sum\limits_{i=1}^Ne^{\delta |L_i|}<\infty$ for some $\delta<0$ and $\mathbb{E}\sum\limits_{i=1}^NL_i=0$,
 then the sequence of finite measures
$A \mapsto  \frac{\mathbb{E}Z_n (a_nA)}{\mathbb{E}Z_n(\mathbb{R})}$ satisfies a principle of moderate deviation: for each measurable subset $A$ of
$\mathbb{R}$,
\begin{eqnarray}\label{LTE2.5.7}
-\frac{1}{2\tilde{\sigma}^2}\inf_{x\in A^o}x^2&\leq&\liminf_{n\rightarrow\infty}\frac{n}{a_n^2}\log
\frac{\mathbb{E}Z_n(a_nA)}{\mathbb{E}_\xi Z_n(\mathbb{R})}\nonumber\\
&\leq&\limsup_{n\rightarrow\infty}\frac{n}{a_n^2}\log
\frac{\mathbb{E}Z_n(a_nA)}{\mathbb{E}_\xi
Z_n(\mathbb{R})}\leq-\frac{1}{2\tilde{\sigma}^2}\inf_{x\in \bar{A}}x^2,
\end{eqnarray}
where $\tilde{\sigma}^2=\frac{1}{\mathbb{E}m_0}\mathbb{E}\sum\limits_{i=1}^NL_i^2$,  and $A^o$ denotes the interior of $A$ and $\bar A$ its closure.
\end{thm}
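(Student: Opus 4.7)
The plan is to apply the Gärtner-Ellis theorem, following the same blueprint as in the proof of Theorem \ref{LTT2.5.2}, but now with both expectations moved outside so that the normalized logarithmic moment generating function is a deterministic sequence. Define
$$\tilde\lambda_n(t) := \log\frac{\mathbb{E}\int e^{a_n^{-1}tx}Z_n(dx)}{\mathbb{E}Z_n(\mathbb{R})}.$$
Since $\mathbb{E}\int e^{tx}Z_n(dx) = \mathbb{E}P_n(t)$ and $(\xi_n)$ is i.i.d., we have $\mathbb{E}P_n(t) = (\mathbb{E}m_0(t))^n$, so the chain of products collapses to
$$\tilde\lambda_n(t) = n\log\frac{\mathbb{E}m_0(a_n^{-1}t)}{\mathbb{E}\pi_0}.$$
It is enough to verify that for every fixed $t\in\mathbb{R}$,
$$\lim_{n\to\infty}\frac{n}{a_n^2}\tilde\lambda_n\!\left(\frac{a_n^2}{n}t\right) = \frac{1}{2}\tilde\sigma^2 t^2,$$
because then the Gärtner-Ellis theorem (\cite{z}, Exercise 2.3.20) gives \eqref{LTE2.5.7} immediately, the rate function being the Legendre transform of $\frac{1}{2}\tilde\sigma^2 t^2$, namely $x^2/(2\tilde\sigma^2)$.

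For the limit computation, write $\mathbb{E}m_0(\tfrac{a_n}{n}t) = \mathbb{E}\sum_{i=1}^{N} e^{(a_n/n)tL_i}$. The exponential moment hypothesis $\mathbb{E}\sum_{i=1}^N e^{\delta|L_i|}<\infty$ (which I read as $\delta>0$, the sign in the statement being a typographical slip, as in Theorem \ref{LTT2.5.2}) implies, by Fubini and dominated convergence, that for $n$ large enough so that $(a_n/n)|t|<\delta$ one may Taylor expand term by term and interchange expectation with the power series in $L_i$. The hypothesis $\mathbb{E}\sum_{i=1}^N L_i=0$ kills the first-order term, giving
$$\mathbb{E}m_0\!\left(\frac{a_n}{n}t\right)=\mathbb{E}\pi_0+\frac{1}{2}\left(\frac{a_n}{n}\right)^2 t^2\,\mathbb{E}\sum_{i=1}^N L_i^2 + R_n,$$
where the remainder $R_n$ satisfies $|R_n|\le C_t (a_n/n)^3$ by the elementary bound $\mathbb{E}\sum_i |L_i|^k \le k!\,\delta^{-k}\mathbb{E}\sum_i e^{\delta|L_i|}$ together with geometric summation of the tail. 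Dividing by $\mathbb{E}\pi_0$ yields
$$\frac{\mathbb{E}m_0(\frac{a_n}{n}t)}{\mathbb{E}\pi_0}=1+\frac{1}{2}\left(\frac{a_n}{n}\right)^2 t^2\,\tilde\sigma^2+O\!\left(\left(\frac{a_n}{n}\right)^3\right).$$

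Finally, using $\log(1+x)=x+O(x^2)$,
$$\frac{n}{a_n^2}\tilde\lambda_n\!\left(\frac{a_n^2}{n}t\right)=\frac{n^2}{a_n^2}\log\!\left(1+\frac{1}{2}\left(\frac{a_n}{n}\right)^2 t^2\tilde\sigma^2+O\!\left(\left(\frac{a_n}{n}\right)^3\right)\right)=\frac{1}{2}\tilde\sigma^2 t^2 + O\!\left(\frac{a_n}{n}\right)+O\!\left(\left(\frac{a_n}{n}\right)^2\right),$$
which tends to $\frac{1}{2}\tilde\sigma^2 t^2$ since $a_n/n\to 0$. The proof is actually simpler than that of Theorem \ref{LTT2.5.2} because there is no random environment left in $\tilde\lambda_n$: once the expectation is moved outside the product, everything is deterministic and there is no need to invoke ergodic-theoretic control of the sum of $\log[m_i(\cdot)/\pi_i]$. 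The only technical point worth attention is the uniform control of the Taylor remainder for the interchange of sum and expectation, which is handled purely by the exponential moment assumption as indicated above.
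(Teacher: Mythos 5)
Your proposal is correct and follows exactly the route the paper intends: the paper proves Theorem \ref{LTT2.5.3} ``by a similar argument to the proof of Theorem \ref{LTT2.5.2}'', i.e.\ using the i.i.d.\ structure to collapse $\mathbb{E}P_n$ into an $n$-th power, Taylor-expanding $\mathbb{E}m_0(\tfrac{a_n}{n}t)$ with the centering hypothesis killing the first-order term, controlling the tail via the exponential moment bound, and concluding with the G\"artner--Ellis theorem. Your reading of $\delta>0$ (a typographical slip in the statement) and your identification of $\tilde\sigma^2=\mathbb{E}\sum_i L_i^2/\mathbb{E}\pi_0$ as the resulting variance are both right, so your write-up is essentially the omitted proof made explicit.
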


\end{document}